\DeclareFontFamily{U}{matha}{\hyphenchar\font45}
\DeclareFontShape{U}{matha}{m}{n}{
  <-6> matha5 <6-7> matha6 <7-8> matha7
  <8-9> matha8 <9-10> matha9
  <10-12> matha10 <12-> matha12
  }{}
\DeclareSymbolFont{matha}{U}{matha}{m}{n}
\DeclareMathSymbol{\Lt}{3}{matha}{"CE}
\newtheorem*{thm}{Theorem}
\newtheorem{lemma}{Lemma}[section]
\newtheorem{theorem}[lemma]{Theorem}
\newtheorem{prop}[lemma]{Proposition}
\newtheorem{corol}[lemma]{Corollary}
\newtheorem{rem}[lemma]{Remark}
\newtheorem{remark}[lemma]{Remark}
\newtheorem{example}[lemma]{Example}
\newtheorem{definition}[lemma]{Definition}
\newcommand*{\avint}{\mathop{\ooalign{$\int$\cr$-$}}}
\numberwithin{equation}{section}
\DeclareMathOperator{\curl}{curl}
\DeclareMathOperator{\divbis}{div}
\begin{document}
\title[MIcrolocal analysis of singular measures]{MIcrolocal analysis of singular measures}
\author[Valeria Banica]{Valeria Banica}
\address[Valeria Banica]{Sorbonne Universit\'e, CNRS, Universit\'e de Paris, Laboratoire Jacques-Louis Lions (LJLL), F-75005 Paris, France, and Institut Universitaire de France (IUF)}
\email{Valeria.Banica@sorbonne-universite.fr} 

\author[Nicolas Burq]{Nicolas Burq}
\address[Nicolas Burq]{Laboratoire de mathÃ©matiques d'Orsay, CNRS, Universit\'e Paris-Saclay, B\^at.~307, 91405 Orsay Cedex, France, and Institut Universitaire de France (IUF)}
\email{nicolas.burq@universite-paris-saclay.fr } 
\maketitle
 \begin{abstract}
The purpose of this article is to investigate the structure of singular  measures from a microlocal perspective. Motivated by the result of De Philippis-Rindler \cite{DePRi}, and the notions of wave cones of Murat-Tartar~\cite{Mu1, Mu2, Ta1, Ta2} and  of polarisation set of Denker \cite{De} we introduce a notion of $L^1$-regularity wave front set for scalar and vector distributions. Our main result is a proper microlocal characterisation of the support of the singular part of tempered Radon measures and of their polar functions at these points. The proof is based on De Philippis-Rindler's approach reinforced by microlocal analysis techniques and some extra geometric measure theory arguments. 
We deduce a sharp $L^1$ elliptic regularity result  which appears to be new even for scalar measures and which enlightens the interest of the techniques from geometric measure theory to the study of harmonic analysis questions. For instance we prove that $ \Psi^0  L^1\cap \mathcal M_{loc}\subseteq L^1_{loc},$ and in particular we obtain $L^1$ elliptic regularity results as $\Delta u\in L^1_{loc}, D^2 u \in \mathcal M_{loc} \Longrightarrow  D^2  u\in L^1_{loc}.$ We also deduce several consequences including extensions of the results in \cite{DePRi} giving constraints on the polar function at singular points for measures constrained by a PDE, and of Alberti's rank one theorem. Finally, we also illustrate the interest of this microlocal approach with a result of propagation of singularities for constrained measures. \\ MSC classification codes: primary 42B37 (Harmonic analysis and PDEs), secondary 28B05 (Vector-valued set functions, measures and integrals), 35A18 (Wave front sets in context of PDEs), 35Jxx (Elliptic equations and elliptic systems). 

\end{abstract}
 	
 \section{Introduction}


\subsection{The framework}
We start by recalling some classical facts about the class of locally bounded Radon measures on $\mathbb R^d$ with values in $\mathbb R^m$, $\mathcal{M}_{loc} ( \mathbb{R}^d, \mathbb{R}^m)$  (see for instance \cite{Ru} or \cite{M}). The
Radon-Nikodym Theorem allows to write the polar decomposition $d\mu=\frac{d\mu}{d|\mu|}d|\mu|$, where the non negative  measure $|\mu|$ is the total variation of the measure $\mu$ and the function $\frac{d\mu}{d|\mu|}\in L^1_{loc}(\mathbb{R}^d (d|\mu|), \mathbb S^{m-1})$ is the Radon-Nikodym derivative of $\mu$ with respect to $|\mu|$, called the polar function of $\mu$. Moreover, the Radon-Nikodym Theorem gives the Lebesgue decomposition of $|\mu|$ with respect to the Lebesgue measure, $\mathcal L^d$, so we have
$$d\mu=g d\mathcal L^d+\frac{d\mu}{d|\mu|}d|\mu|_s,$$
where $g\in L^1(\mathbb R^d,\mathbb R^m)$ and the positive measure $|\mu|_s$ satisfies $|\mu|_s\perp \mathcal L^d$. Finally, the Radon-Nikodym Theorem applied to each component of $\mu$ with respect to $\mathcal L^d$ implies that the singular part of $\mu$ is $d\mu_s=\frac{d\mu}{d|\mu|\,}d|\mu|_s$, and in particular $\frac{d\mu}{d|\mu|\,}=\frac{d\mu_s}{d|\mu|_s\,}$, $\mu_s$ a.e. and $|\mu_s|=|\mu|_s$. 

In the following discussion and results we shall consider pseudodifferential operators and we have gathered in Section~\ref{sec.pseudo} the definitions and basic results from the theory we need. We denote by $S^k_{\text{cl}}(\mathbb R^d)$ the class of symbols of order $k$ and by $\Psi^k_{\text{cl}}(\mathbb R^d)$ the class of classical  pseudodifferential operators of order $k$. For $u$  a tempered distribution on $\mathbb{R}^d$ with values in $\mathbb R^m$, that is $u=(u_1,...,u_m)$ with $u_i\in\mathcal S'(\mathbb{R}^d,\mathbb R)$, if $A=(A_{ij})$ is a $n\times m$ matrix of pseudodifferential operators then the distribution $Au\in\mathcal D'(\mathbb{R}^d,\mathbb R^n)$ is defined by
$$(Au)_j=\sum_k A_{jk} u_k , \quad 1\leq j\leq n.$$
For $A\in\Psi^k_{\text{cl}}(\mathbb R^d)$ we denote by $a_k:\mathbb{R}^d\times\mathbb R^{d}\rightarrow\mathbb R$ (or simply by $a$) its homogeneous principal symbol and similarly for matrices of pseudodifferential operators.\\

We recall now the following result of De Philippis and Rindler that connects the polarisation of the singular part of an $\mathcal A$-constrained measure and the wave cone of non-elliptic directions of the operator $\mathcal A$ introduced by Murat and Tartar~\cite{Mu1, Mu2, Ta1, Ta2}:
$$\Lambda_{\mathcal A}:=\underset{|\xi|=1}{\bigcup}\ker (\sum_{|\alpha|=k}A_\alpha\xi^\alpha).$$

\begin{thm} [{\cite[Theorem 1.1]{DePRi}}] Let $\mathcal A=\sum_{|\alpha|\leq k}A_\alpha\partial^\alpha,  k\in\mathbb N $, $A_\alpha\in M_{n\times m}$, and $\mu\in\mathcal{M}_{loc} ( \mathbb{R}^d, \mathbb{R}^m)$. Then\\
$$\mathcal A\mu\overset{\mathcal D'}{=}0_{\mathbb R^n}\quad \Longrightarrow \quad
\frac{d\mu}{d|\mu|}(x)\overset{|\mu|_s-a.e}{\in}\Lambda_{\mathcal A}.$$
\end{thm}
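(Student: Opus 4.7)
The plan is a blow-up argument in the spirit of geometric measure theory, combined with Fourier analysis on the resulting tangent measure. I choose a point $x_0 \in \mathrm{supp}\,|\mu|_s$ at which (a) $x_0$ is an $L^1(d|\mu|)$-Lebesgue point of the polar function, with value $P := \frac{d\mu}{d|\mu|}(x_0) \in \mathbb{S}^{m-1}$, and (b) $\limsup_{r \to 0} r^{-d}|\mu|(B(x_0,r)) = +\infty$; both conditions hold for $|\mu|_s$-a.e.\ $x_0$ by the Lebesgue differentiation theorem and the Besicovitch-type density theorem (since $|\mu|_s \perp \mathcal L^d$). Along a sequence $r_n \downarrow 0$ realising (b), form the rescaled measures
\[
\nu_n := \frac{(T_{x_0,r_n})_*\mu}{|\mu|(B(x_0,r_n))}, \qquad T_{x_0,r}(y) := \frac{y-x_0}{r},
\]
which satisfy $|\nu_n|(B(0,1)) = 1$, and extract a weak-$*$ subsequential limit $\nu$. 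Condition (a) forces the polar direction of $\nu$ to be constantly $P$, so $\nu = P\sigma$ with $\sigma$ a non-trivial positive Radon measure; condition (b), combined with $g \in L^1_{loc}$, forces the absolutely continuous part of $\nu_n$ on $B(0,1)$ to vanish, whence $\sigma \perp \mathcal L^d$.

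Next, the constraint $\mathcal{A}\mu = 0$ rescales to $\sum_{|\alpha|\leq k} r_n^{k-|\alpha|} A_\alpha \partial_y^\alpha \nu_n = 0$; as $r_n \to 0$ the lower-order contributions vanish and one obtains the homogeneous limit equation
\[
\mathcal{A}_0 (P \sigma) = 0 \quad \text{in } \mathcal{D}'(\mathbb{R}^d, \mathbb{R}^n), \qquad \mathcal{A}_0 := \sum_{|\alpha|=k} A_\alpha \partial^\alpha.
\]
Fourier-transforming gives $\mathbb{A}(\xi) P \,\hat{\sigma}(\xi) = 0$ in $\mathcal{S}'(\mathbb{R}^d, \mathbb{R}^n)$, where $\mathbb{A}(\xi) := \sum_{|\alpha|=k}(i\xi)^\alpha A_\alpha$ is the (complexified) principal symbol.

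Suppose towards a contradiction that $P \notin \Lambda_{\mathcal{A}}$. Then $\mathbb{A}(\xi)P \neq 0$ for every $\xi \in \mathbb{S}^{d-1}$, and by homogeneity for every $\xi \in \mathbb{R}^d \setminus \{0\}$. Hence the components of the vector polynomial $\mathbb{A}(\cdot)P$ have common zero set $\{0\}$, and the distributional equation $\mathbb{A}(\xi)P\, \hat\sigma = 0$ forces $\mathrm{supp}(\hat\sigma) \subseteq \{0\}$. Therefore $\sigma$ is a polynomial density against $\mathcal{L}^d$, in particular $\sigma \ll \mathcal{L}^d$, which contradicts $\sigma \perp \mathcal{L}^d$. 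Hence $P \in \Lambda_{\mathcal{A}}$, as claimed. The main obstacle I anticipate is the rigorous execution of the blow-up bookkeeping: verifying at $|\mu|_s$-a.e.\ $x_0$ that the chosen normalisation produces a non-trivial tangent, that the lower-order terms of $\mathcal{A}$ indeed drop out at the correct scale, and crucially that the limiting $\sigma$ is genuinely $\mathcal{L}^d$-singular. This last point is the delicate geometric-measure-theoretic input underpinning the De Philippis-Rindler strategy, and is where I expect the argument to require the most care.
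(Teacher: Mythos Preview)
Your argument has a genuine gap at exactly the point you flag as delicate, and it is not a matter of bookkeeping: the claim that the tangent measure $\sigma$ satisfies $\sigma \perp \mathcal L^d$ is false in general and cannot be repaired. That the absolutely continuous part of $\nu_n$ on $B(0,1)$ tends to zero only tells you that the $\nu_n$ are asymptotically singular; it says nothing about the weak-$*$ limit. Singular measures can converge weak-$*$ to absolutely continuous ones (think of $\frac{1}{n}\sum_{k=1}^n \delta_{k/n} \overset{*}{\rightharpoonup} \mathcal L^1\llcorner[0,1]$), and Preiss-type constructions show that tangent measures of purely singular measures can be absolutely continuous. So your contradiction never closes.

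The De~Philippis--Rindler argument (which is the one the paper follows in Section~3, see Proposition~\ref{dissoc} and Lemma~\ref{passing}) runs in the \emph{opposite} direction. One does not try to show $\sigma\perp\mathcal L^d$; instead, the hypothesis $P\notin\Lambda_{\mathcal A}$ furnishes an operator elliptic in the direction $P$, and this ellipticity is used---via an inversion formula, Calder\'on--Zygmund weak $L^1$ bounds, and the equi-integrability Lemma~\ref{passing}---to upgrade the weak-$*$ convergence of the rescaled (singular) measures $\nu_j$ to \emph{strong} convergence in $L^1_{loc}$. This forces both $\nu\llcorner B_{1/2}\ll\mathcal L^d$ and $|\nu_j-\nu|(B_{1/2})\to 0$. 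The contradiction then comes from comparing $\nu_j(E_j)=\nu_j(B_{1/2})\to\nu(B_{1/2})>0$ with $\nu_j(E_j)\leq |\nu_j-\nu|(E_j)+\nu(E_j)\to 0$, where $E_j$ are the Lebesgue-null sets on which $\nu_j$ concentrates. Your Fourier observation that $\mathrm{supp}\,\hat\sigma\subset\{0\}$ is correct but, absent the strong-convergence machinery, it gives no contradiction by itself.
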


This result is impressive by its statement, its proof and its deep consequences in geometric measure theory proved in \cite{DePRi} by taking the constraint operator $\mathcal A$ to be $div, curl$ and $curl curl$. We shall do now two series of remarks, one with respect to elliptic constraints, and one on the link with the wave front set notions.

First we notice that  \cite[Theorem 1.1]{DePRi}, even for variable coefficients differential operators, is only relevant for systems ($m\geq 2$),   and non elliptic equations, i.e. the operator $\mathcal A$ is {not elliptic}. Indeed, for the scalar case, and non elliptic equation, \cite[Theorem 1.1]{DePRi} gives no information. On the other hand, 
 if the  differential operator $\mathcal{A}$ is elliptic and of order $0$, even space dependent, i.e it is the multiplication by an invertible space dependent matrix, it is straightforward that 
$$ \mathcal{A}(x) \mu = f\in L^1  \Rightarrow \mu= \mathcal{A}(x) ^{-1} f \in L^1,$$
so $\mu$ has no singular part. Finally, if $\mathcal{A}(x, D_x)$ is elliptic of order $k>0$, $\mathcal{A} \mu = f \in \mathcal{M}$, then the pseudodifferential calculus shows, after replacing $\mathcal A$ by $\mathcal A^*\mathcal A$ if $m\neq n$,  that there exists an operator $\mathcal{B}\in M_{m\times m} (\Psi^{-k}_{cl} ( \mathbb{R}^d))$, such that $\mathcal{B} \mathcal{A} = \text{Id} +\mathcal{R},$ where $\mathcal R$ maps $\mathcal{S}'(\mathbb{R}^n) $ to $C^\infty( \mathbb{R}^n)$. As a consequence, we get
$$ \mu = \mathcal{B} \mathcal{A} \mu- \mathcal{R}\mu =  \mathcal{B} f- \mathcal{R}\mu, \qquad f\in \mathcal{M}, $$ 
and since pseudodifferential operators of negative order send $\mathcal{M}$ to $L^1$ (see Corollary~\ref{coro})  we also get that $\mu$ has no singular part.

We now notice that the general scalar case, when $\Lambda_{\mathcal A}$ corresponds to the characteristic set, can be seen also by using H\"ormander's theorem:
 $$\mu\notin L^1 \mbox{ at } x\Rightarrow \mu\notin\mathcal C^\infty\mbox{ at } x\Rightarrow \exists \xi, (x,\xi)\in WF_{\mathcal C^\infty}\mu\overset{\mathcal A\mu=0}{\Rightarrow} a_k(x,\xi)=0.$$
We recall that the classical wave front set of a distribution $u\in\mathcal D(\mathbb R^d,\mathbb R)$, which can be written as: 
$$WF(u):=\{(x,\xi)\in \mathbb{R}^d\times\mathbb R^{d*};\, \mathbb R= \cap_{Au\in\mathcal C^\infty, A\in\Psi(\mathbb R^d)}\ker a(x,\xi)\},$$
introduced by H\"ormander gives a proper characterisation of the singular support of a distribution,  i.e. the complementary of points where the distribution is $\mathcal C^\infty$. 
This lead us to looking for a wave front set notion appropriate for the present context. 
For vectorial distributions $u\in\mathcal D(\mathbb R^d,\mathbb R^m)$, a refinement of the classical $\mathcal C^\infty$-regularity wave front set that takes into account the polarization was introduced by Denker \cite{De} following suggestions by  of H\"ormander:
$$WF_{pol}(u):=\{(x,\xi,\omega)\in \mathbb{R}^d\times\mathbb R^{d*}\times\mathbb R^{m};\, \omega\in \cap_{Au\in\mathcal C^\infty, A\in M_{1\times m}(\Psi(\mathbb R^d))}\ker a(x,\xi)\}.$$
In particular, Proposition 2.7 in \cite{De} on the action of the principal symbol on the fiber of the polarization wave front set encouraged us to look for a purely microlocal setting (see our  main result Theorem \ref{thWFL1}). Moreover, results of propagations of $C^\infty$ or $L^2$-based singularities are available for the polarisation wave fronts sets in ~\cite{De} (see also G\'erard~\cite{Ge85}).

Summarizing the last remarks, the starting motivation for us to introduce a $L^1$- wave front notion was to make a connection between De Phillipis-Rindler results and the polarisation wave fronts introduced in microlocal analysis by Dencker~\cite{De}. The proof of \cite[ Theorem 1.1]{DePRi}, involving powerful measure theory notions, will allow us to prove our main result Theorem \ref{thWFL1} and to treat critical $L^1$-cases that were not at reach by the classical H\"ormander theory.


\subsection{Definition of a $L^1$-wave front set for distributions and the main result}

In this article we introduce an $L^1$-regularity wave front set for vector valued  tempered distributions.
For simplicity, we chose to work with tempered distributions and tempered Radon measures rather than distributions and locally bounded Radon measures, which would have been possible by localising our definitions of wave fronts with a cut off (see Remark~\ref{front-onde-rem} 1)

 \begin{definition}\label{defWFL1} For $u\in\mathcal {S}'(\mathbb{R}^d,\mathbb R^m)$ we define the set

$$WF_{L^1}(u )\subseteq  (T^*\mathbb{R}^d \setminus\{0\}) \times (\mathbb{R}^m  \setminus\{0\}))= \mathbb{R}^d\times\mathbb R^{d*}\times\mathbb R^{m*}$$ by the following:  
$(x_0, \xi_0, \omega_0) \notin WF_{L^1}(u )$ if and only if there exist $A \in M_{1\times m}(\Psi^0_{\text{cl}}(\mathbb R^d))$, $N\in\mathbb N^*, Q_i\in \Psi^0_{\text{cl}}(\mathbb R^d)$ and $ f_i \in L^1( \mathbb{R}^d,\mathbb R)$ for $1\leq i\leq N$,  
satisfying 
\begin{equation}\label{definiWF} 
 A u = \sum_{i=1}^N Q_i f_i,
 \end{equation}
with $A$ elliptic at $(x_0, \xi_0, \omega_0)$ in the sense that 
$$ a_0(x_0, \xi_0)\,\omega_0\neq 0.$$
\item This definition is equivalent to: 
\begin{equation}\label{definiWFdir} WF_{L^1}(u )=\{(x,\xi,\omega)\in \mathbb{R}^d\times\mathbb R^{d*}\times\mathbb R^{m*};\, \omega\in \cap\ker a_0(x,\xi)\}, 
 \end{equation}
 where the intersection is over all possible $A\in M_{1\times m}(\Psi^0_{\text{cl}}(\mathbb R^d))$, $N\in\mathbb N^*, Q_i\in \Psi^0_{\text{cl}}(\mathbb R^d)$ and $f_i \in L^1( \mathbb{R}^d, \mathbb R)$ for $1\leq i\leq N$, such that 
 $Au=\sum_{i=1}^N Q_i f_i .$ 

\end{definition}

Among the class $\mathcal{M}_{loc} ( \mathbb{R}^d, \mathbb{R}^m)$ we shall consider the tempered Radon measures 
$$\mathcal M_t(\mathbb{R}^d,\mathbb R^m):=\cup_{\eta\in \mathbb{R}^+}\mathcal{M}_\eta ( \mathbb{R}^d, \mathbb{R}^m),$$
$$\mathcal{M}_\eta ( \mathbb{R}^d, \mathbb{R}^m) := \{ \mu \in \mathcal{M}_{loc} ( \mathbb{R}^d, \mathbb{R}^m); \exists C>0,\, \forall R>1,\, |\mu| (B(0, R)) \leq C R^\eta \}.$$

\medskip

Our  main result is a connection between the support and the polarisation of the singular part of a measure and its wave front set. 
\begin{theorem}\label{thWFL1} For $\mu\in\mathcal M_t(\mathbb{R}^d,\mathbb R^m)$ we have the inclusion:
\begin{equation}\label{singpolincl}
\Bigl(x,\frac{d\mu}{d|\mu|}(x)\Bigr)\overset{|\mu|_s-a.e}{\in}\Pi_{13}(WF_{L^1}(\mu)),
\end{equation}
where $\Pi_{13}$ denotes the projection with respect to the first and third variables, i.e. for $|\mu|_s$ almost all $x$, 
$$ \exists \xi \in \mathbb{S}^{d-1},\: \Bigl( x, \xi, \frac{d\mu}{d|\mu|}(x)\Bigr)\,{\in} \,WF_{L^1}(\mu).$$
\end{theorem}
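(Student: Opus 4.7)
The argument proceeds by contradiction, adapting the tangent-measure strategy of De Philippis--Rindler to the pseudodifferential / $L^1$-type setting of Definition \ref{defWFL1}. Suppose the conclusion fails: there is a Borel set $E\subset \mathbb R^d$ with $|\mu|_s(E)>0$ such that for every $x\in E$, no $\xi\in\mathbb S^{d-1}$ satisfies $(x,\xi,\omega_0(x))\in WF_{L^1}(\mu)$, where $\omega_0(x):=\frac{d\mu}{d|\mu|}(x)\in\mathbb S^{m-1}$. Using inner regularity of $|\mu|_s$ and Lusin's theorem applied to the $|\mu|_s$-measurable map $\omega_0\colon\mathbb R^d\to\mathbb S^{m-1}$, I shrink $E$ to a compact subset on which $\omega_0$ is continuous, keeping $|\mu|_s(E)>0$.

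The next step is to assemble from the pointwise hypothesis a \emph{single} matrix-valued operator that captures it uniformly on $E\times\mathbb S^{d-1}$. For each $(x,\xi)\in E\times\mathbb S^{d-1}$, Definition \ref{defWFL1} provides a row $A_{x,\xi}\in M_{1\times m}(\Psi^0_{\text{cl}}(\mathbb R^d))$ with $A_{x,\xi}\mu=\sum_i Q_{i,x,\xi} f_{i,x,\xi}$, $Q_{i,x,\xi}\in\Psi^0_{\text{cl}}$, $f_{i,x,\xi}\in L^1$, and $a_{0,x,\xi}(x,\xi)\omega_0(x)\neq 0$. Joint continuity of the principal symbol and of $\omega_0|_E$ propagates this non-vanishing to an open neighborhood of $(x,\xi)$ in $E\times\mathbb S^{d-1}$. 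Extracting a finite subcover and vertically stacking the selected rows yields a matrix $\mathcal A\in M_{N\times m}(\Psi^0_{\text{cl}})$ such that $\mathcal A\mu=\sum_k \widetilde Q_k \widetilde f_k$ is a finite sum of scalar $\Psi^0_{\text{cl}}$ operators applied to $L^1$ functions, while
$$\omega_0(x)\notin\bigcup_{\xi\in\mathbb S^{d-1}}\ker \mathcal A_0(x,\xi)\qquad \text{for every } x\in E.$$

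The conclusion is then reached by a De Philippis--Rindler blow-up. At $|\mu|_s$-a.e.\ $x_0\in E$, the rescalings $\mu(x_0+r_n\cdot)/c_{r_n}$, with $c_{r_n}:=|\mu|(B(x_0,r_n))$, converge along a subsequence weakly-$*$ to a non-trivial tangent measure $\nu$; the singularity $|\mu|_s\perp\mathcal L^d$ gives $c_{r_n}/r_n^d\to\infty$, and the rigidity of the polar function at singular points (a standard consequence of the Besicovitch differentiation theorem) forces $\nu=\omega_0(x_0)\,\lambda$ for a positive Radon measure $\lambda$ on $\mathbb R^d$. One now passes the identity $\mathcal A\mu=\sum_k \widetilde Q_k \widetilde f_k$ to the limit: classical $\Psi^0_{\text{cl}}$ operators rescale, at leading order, to the frozen-coefficient Fourier multiplier $\mathcal A_0(x_0,D)$, whereas the $L^1$ contributions disappear, because each measure $|\widetilde f_k|\mathcal L^d$ is singular with respect to $|\mu|_s$ and therefore has vanishing density with respect to $|\mu|$ at $|\mu|_s$-a.e.\ point. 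Consequently $\mathcal A_0(x_0,D)(\omega_0(x_0)\lambda)=0$, and the Fourier-side lemma of \cite{DePRi} forces $\omega_0(x_0)$ into $\bigcup_\xi \ker \mathcal A_0(x_0,\xi)$, contradicting the construction of $\mathcal A$ and completing the proof.

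The main obstacle is the passage of the right-hand side to the blow-up limit. Since $\Psi^0_{\text{cl}}$ fails to map $L^1$ into $L^1$, one cannot simply invoke that ``$L^1$ functions carry no singular tangent measure.'' Instead one must pair $\sum_k \widetilde Q_k \widetilde f_k$ with rescaled test functions $\phi_{r_n}$, transfer $\widetilde Q_k^*$ to the test side, and control the pairings $\langle \widetilde f_k,\widetilde Q_k^*\phi_{r_n}\rangle/c_{r_n}$ using simultaneously the decay of the Schwartz kernels of the $\widetilde Q_k^*$ and the endpoint Besicovitch differentiation comparing $|\widetilde f_k|\mathcal L^d$ with $|\mu|$. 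This is precisely the blend of microlocal analysis and geometric measure theory announced in the introduction.
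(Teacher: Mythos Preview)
Your setup is sound: the contradiction hypothesis, the reduction to a compact $E$ on which $\omega_0$ is continuous, and the construction of a single matrix $\mathcal A\in M_{N\times m}(\Psi^0_{\text{cl}})$ with $\mathcal A\mu\in\sum_k\Psi^0_{\text{cl}}L^1$ and $\omega_0(x)\notin\bigcup_{\xi}\ker\mathcal A_0(x,\xi)$ for every $x\in E$ are all correct, and in fact your finite-cover assembly is slightly cleaner than the paper's countable Besicovitch cover in Proposition~\ref{eqcovbis}.

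The gap is in the endgame. There is no ``Fourier-side lemma'' in \cite{DePRi} that does what you claim; the De Philippis--Rindler proof is \emph{not} a weak-limit-plus-Fourier argument. Even if you could justify the distributional limit $\mathcal A_0(x_0,D)(\omega_0\lambda)=0$ (which for order-$0$ multipliers, singular at $\xi=0$, already requires the low-frequency cut-off the paper introduces), the conclusion you would draw is only that $\widehat\lambda$ is supported near the origin, hence $\lambda\ll\mathcal L^d$. But absolute continuity of a tangent measure of a singular measure is \emph{not} a contradiction: singular measures can and do have absolutely continuous tangents. The contradiction in \cite{DePRi} and in the present paper comes from the \emph{strong} convergence $|\nu_j-\nu|(B_{1/2})\to 0$ combined with $\nu_j\perp\mathcal L^d$; it is this strong convergence that must be proved, and it does not follow from any equation satisfied by the limit $\nu$ alone.

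Your proposed fix for the right-hand side (move $\widetilde Q_k^*$ to the test function and use kernel decay plus Besicovitch) does not close either. The Schwartz kernel of a $\Psi^0_{\text{cl}}$ operator decays only like $|x-y|^{-d}$ in the region $|x-y|\le 1$; pairing this against the rescaled $f_k$ and using the temperance bound $|f_k\mathcal L^d|(B(x_0,Rr_j))/|\mu|_s(B(x_0,r_j))\lesssim R^{d+\eta}$ gives dyadic shells contributing like $2^{j\eta}$, and the sum over $j\le\log(1/r_j)$ diverges for every $\eta>0$. The paper's route is genuinely different: one \emph{inverts} the (row-)elliptic rescaled operator $B^j_{\omega_0}$ to write the smoothed blow-ups $u_j$ as a sum of terms that are precompact in $L^1_{loc}$ (negative-order pieces, commutators, handled via Corollary~\ref{coro} and Proposition~\ref{compactbis}) plus a remainder $\tilde\chi P^j\tilde\chi(B^jV_j+Q^jg_j)$. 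For this remainder one uses the $L^1\!\to\!L^{1,\infty}$ boundedness of $\Psi^0_{\text{cl}}$ (with uniform Calder\'on--Zygmund constants) to get convergence to $0$ in measure, and then Lemma~\ref{passing} (the positivity/equi-integrability trick from \cite{DePRi}) upgrades this to $L^1_{loc}$ convergence. That is the missing mechanism; without it the argument does not reach a contradiction.
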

\begin{rem}\label{front-onde-rem}\ 
\begin{enumerate}
\item Instead of considering temperate distributions, we could consider general distributions simply by adding a cut-off in the definition of $WF_{L^1}$ below i.e. replacing $Au$ by $A \chi u$ in ~\eqref{definiWF}. In the same vein, it is easy to see that for any $\chi \in C^\infty_c( \mathbb{R}^d)$, $\mu \in \mathcal{M}_t(\mathbb R^d,\mathbb R^m)$,
\begin{equation}\label{WFloc} WF_{L^1}( \chi \mu ) =  WF_{L^1}(  \mu ) \cap \{( x,\xi,w)\in \mathbb{R}^d\times\mathbb R^{d*}\times\mathbb R^{m*,}\, \chi (x) \neq 0\}.\end{equation}
Indeed, let $\widetilde{\chi} \in C^\infty_c( \mathbb{R}^d)$ equal to $1$ on the support of $\chi$. Then 
$$A \mu = Qf \Rightarrow \widetilde{\chi} A (\chi \mu) = \chi Q f + \widetilde{\chi} [ A, \chi] \mu,$$ 
and $\tilde \chi[A, \chi]$ is a pseudodifferential operator of order $-1$ which sends $\mathcal{M}_t $ to $L^1_{loc}$ in view of Proposition \ref{compactbis}.

 \item  We can obtain $WF_{L^1}$ by considering in Definition \ref{defWFL1} the existence of $n,N\in\mathbb N^*, A \in M_{n\times m}(\Psi^0_{\text{cl}}(\mathbb R^d))$, $Q^j\in M_{1\times N}(\Psi^0_{\text{cl}}(\mathbb R^d))$ and $f^j\in M_{N\times 1}(L^1( \mathbb{R}^d,\mathbb R))$ such that $(A u)_j =Q^j f^j$ holds for $1\leq j\leq n$, and $a_0(x, \xi)\omega\neq 0_{\mathbb R^n}$. Indeed, if $a_0 (x, \xi) \omega_0\neq 0_{\mathbb R^n}$, then there exists one line such that $(a_0 (x, \xi) \omega_0)_{j_0}\neq 0$, and choosing this  line as the $M_{1\times m}(\Psi^0_{\text{cl}}(\mathbb R^d))$ test operator allows to conclude.
 \item We can also define $WF_{L^1} $ using operators of order $k \in \mathbb{R}$ with the same order $k$ for both $A$ and $Q_i$ (by replacing $A$ by $(1- \Delta)^{-k/2} A$).
 \item In the case of scalar distributions Definition \ref{defWFL1} of $WF_{L^1}$  recovers the classical shape of a subset of $T^*\mathbb{R}^d \setminus\{0\}$ and the definition of ellipticity $a_0(x,\xi)\neq 0$ is the usual one. 
 \item The set $WF_{L^1}(u)$ together with the trivial fibers ($\omega=0_{\mathbb R^m}$) form a closed set, conical in $\xi$ and linear in $\omega$.
\item For conciseness, we shall state the results involving hypothesis as equality \eqref{definiWF}  with only one term of type $\Psi^0_{\text{cl}}\,L^1(\mathbb R^d,\mathbb R)$ in the right-hand side, but of course our results hold for finite sums of such terms. 
\item  It is very likely that our results remain true even after relaxing the smoothness assumptions on the pseudodifferential operators or even for paradifferential operators~\cite{Me}. However, for the sake of simplicity, we decided not to pursue this issue in this article.

  \end{enumerate}
\end{rem}
\subsection{Consequences}
Starting from  Theorem \ref{thWFL1} we shall deduce a series of results.  First we get a proper microlocal characterisation of the singular support of a scalar or vector measure. 
\begin{theorem}\label{corWFL1}
For $\mu\in\mathcal M_t(\mathbb{R}^d,\mathbb R^m)$ we have:
$$\Pi_{1}(WF_{L^1}(\mu)))=\varnothing\iff \mu\in L^1_{loc}(\mathbb{R}^d,\mathbb R^m),$$
and moreover
$$\Pi_{1}(WF_{L^1}(\mu)))\subseteq \mbox{supp}\, |\mu|_s, \quad |\mu|_s(\mbox{supp}\, |\mu|_s\setminus \Pi_{1}(WF_{L^1}(\mu))))=0.$$
\end{theorem}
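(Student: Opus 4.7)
The two statements of the second assertion together yield the biconditional of the first assertion for free: if $\Pi_1(WF_{L^1}(\mu))=\varnothing$, the null-set statement will force $|\mu|_s(\mathrm{supp}\,|\mu|_s)=0$, hence $|\mu|_s=0$, and the Lebesgue decomposition then gives $\mu=g\,\mathcal L^d\in L^1_{loc}$; conversely $\mu\in L^1_{loc}$ entails $|\mu|_s=0$, $\mathrm{supp}\,|\mu|_s=\varnothing$, and the inclusion forces $\Pi_1(WF_{L^1}(\mu))=\varnothing$. So the plan is to concentrate on the two statements of the second assertion, one proved by a direct cutoff construction and the other by a one-line application of Theorem \ref{thWFL1}.

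For the inclusion $\Pi_1(WF_{L^1}(\mu))\subseteq\mathrm{supp}\,|\mu|_s$, I fix $x_0\notin\mathrm{supp}\,|\mu|_s$ and arbitrary $\xi_0\in\mathbb R^{d*}\setminus\{0\}$, $\omega_0\in\mathbb R^{m*}\setminus\{0\}$, and exhibit a witness placing $(x_0,\xi_0,\omega_0)$ outside $WF_{L^1}(\mu)$. By assumption there is an open neighbourhood $U\ni x_0$ with $|\mu|_s(U)=0$, so on $U$ the measure $\mu$ coincides with $g\,\mathcal L^d$ for some $g\in L^1_{loc}(U,\mathbb R^m)$. Pick $\chi\in C^\infty_c(U)$ with $\chi\equiv 1$ in a neighbourhood of $x_0$ and take $A\in M_{1\times m}(\Psi^0_{\mathrm{cl}}(\mathbb R^d))$ to be the row of multiplication operators $A=(\chi\,\omega_0^1,\dots,\chi\,\omega_0^m)$. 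Its principal symbol satisfies $a_0(x_0,\xi_0)=\omega_0^T$, so $a_0(x_0,\xi_0)\,\omega_0=|\omega_0|^2\neq 0$ and $A$ is elliptic at $(x_0,\xi_0,\omega_0)$ in the sense of Definition \ref{defWFL1}. Moreover
$$A\mu=\sum_k \chi\,\omega_0^k\,\mu_k = \bigl(\chi\,\omega_0\cdot g\bigr)\,\mathcal L^d,$$
which is an $L^1(\mathbb R^d,\mathbb R)$ function; the identity~\eqref{definiWF} then holds with $N=1$, $Q_1=\mathrm{Id}$ and $f_1=\chi\,\omega_0\cdot g\in L^1$.

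For the null statement $|\mu|_s\bigl(\mathrm{supp}\,|\mu|_s\setminus\Pi_1(WF_{L^1}(\mu))\bigr)=0$, I invoke Theorem \ref{thWFL1} directly: for $|\mu|_s$-a.e.\ $x$ there exists $\xi\in\mathbb S^{d-1}$ with $\bigl(x,\xi,\tfrac{d\mu}{d|\mu|}(x)\bigr)\in WF_{L^1}(\mu)$, and because $\tfrac{d\mu}{d|\mu|}(x)\in\mathbb S^{m-1}$ is nonzero this triple is an honest element of $WF_{L^1}(\mu)\subseteq T^*\mathbb R^d\setminus\{0\}\times(\mathbb R^m\setminus\{0\})$. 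Thus $x\in\Pi_1(WF_{L^1}(\mu))$ for $|\mu|_s$-a.e.\ $x$, which is precisely the desired null statement.

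I do not expect a substantive obstacle in this proof: the real content has already been absorbed into Theorem \ref{thWFL1}, and the present theorem is a repackaging supplemented by the cutoff construction above. The only bookkeeping point worth double-checking is that multiplication by a compactly supported smooth function is an admissible element of $\Psi^0_{\mathrm{cl}}(\mathbb R^d)$ whose principal symbol is the multiplier itself, which is standard.
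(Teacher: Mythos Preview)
Your proof is correct and follows essentially the same approach as the paper: the inclusion is obtained via a cutoff localizing near $x_0\notin\mathrm{supp}\,|\mu|_s$, and the null-set statement is a direct consequence of Theorem~\ref{thWFL1}. The only organizational difference is that you derive the biconditional from the second assertion and build the witnessing operator $A=\chi\,{}^t\omega_0$ directly, whereas the paper proves the biconditional separately and then appeals to it together with the localization formula~\eqref{WFloc} applied to $\chi\mu$; these are equivalent paths to the same endpoint.
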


\bigskip

Next, we obtain a series of full $L^1$ elliptic regularity result, that are the central applications of Theorem \ref{thWFL1}.
We start with the scalar case.
\begin{theorem}\label{corellL2}
Consider $\mu\in \mathcal M_t(\mathbb R^d,\mathbb R)$. Let $k\in \mathbb{R}$ and $A\in \Psi ^k_{\text{cl}}(\mathbb R^d)$ be elliptic at $x_0\in \mathbb{R}^d$ in the sense 
$$
a_0(x_0,\xi)\neq 0, \quad \forall\xi\in\mathbb S^{d-1},
$$
and let $B\in \Psi ^k_{\text{cl}}(\mathbb R^d)$. Assume that 
$$ B\mu \in \mathcal{M}_t ( \mathbb{R}^n), A\mu \in  \Psi^0_{\text{cl}}\,L^1(\mathbb R^d).$$
 Then in a neighborhood of $x_0$, 
 $$B\mu \in L^1 (\mathbb{R}^n)
 .$$ 
\end{theorem}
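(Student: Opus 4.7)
The plan is to deduce this scalar $L^1$ elliptic regularity from the microlocal characterisation of Theorem \ref{corWFL1}. Choose nested cutoffs $\tilde\psi \prec \varphi \in C^\infty_c(\mathbb{R}^d)$ both equal to $1$ near $x_0$, with $\varphi$ supported in a neighborhood of $x_0$ where $a_0(x,\xi) \neq 0$ for every $(x,\xi) \in \mathrm{supp}(\varphi) \times \mathbb{S}^{d-1}$ (possible by continuity). Since $B\mu \in \mathcal{M}_t$, the product $\tilde\psi B\mu$ is a compactly supported tempered Radon measure, so by Theorem \ref{corWFL1} it suffices to show $WF_{L^1}(\tilde\psi B \mu) = \varnothing$. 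In the scalar case, this amounts to producing, for each $(x,\xi)$, an operator elliptic at $(x,\xi)$ that maps $\tilde\psi B \mu$ into a finite sum of $\Psi^0_{\text{cl}}L^1$ terms.

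Next, I would apply the classical elliptic parametrix construction: by ellipticity of $A$ on $\mathrm{supp}(\varphi)$ in all directions, there exists $E \in \Psi^{-k}_{\text{cl}}(\mathbb{R}^d)$ such that
$$\varphi E A = \varphi + R_0, \qquad R_0 \in \Psi^{-\infty}(\mathbb{R}^d).$$
Decompose
$$\tilde\psi B \mu = \tilde\psi B \varphi \mu + \tilde\psi B (1-\varphi)\mu.$$
Because $\mathrm{supp}(\tilde\psi)$ and $\mathrm{supp}(1-\varphi)$ are disjoint, pseudo-locality implies that $\tilde\psi B (1-\varphi)$ is smoothing, so $\tilde\psi B (1-\varphi)\mu \in C^\infty$ with compact support, hence in $L^1(\mathbb{R}^d)$. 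For the on-diagonal term, the parametrix identity $\varphi\mu = \varphi E A\mu - R_0\mu$ yields
$$\tilde\psi B \varphi \mu = \tilde\psi B \varphi E A \mu - \tilde\psi B R_0 \mu,$$
and $\tilde\psi B R_0 \mu$ is again smooth with compact support (as $R_0$ sends $\mathcal{S}'$ to $C^\infty$), hence in $L^1(\mathbb{R}^d)$.

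It remains to handle $\tilde\psi B \varphi E A\mu$. By hypothesis, $A\mu = Q f$ with $Q \in \Psi^0_{\text{cl}}$ and $f \in L^1$, so
$$\tilde\psi B \varphi E A\mu = (\tilde\psi B \varphi E Q)\, f,$$
and the composed operator belongs to $\Psi^0_{\text{cl}}$ since its order is $k + (-k) + 0 = 0$. Writing the smooth compactly supported pieces as $\mathrm{Id}\cdot g$ with $g\in L^1$, we obtain an expression of $\tilde\psi B\mu$ as a finite sum $\sum_i Q_i f_i$ with $Q_i \in \Psi^0_{\text{cl}}$ and $f_i \in L^1$. Taking the test operator $A' = \mathrm{Id}$ in Definition \ref{defWFL1}, which is scalar elliptic at every $(x,\xi)$, we conclude $WF_{L^1}(\tilde\psi B\mu) = \varnothing$. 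Theorem \ref{corWFL1} then gives $\tilde\psi B \mu \in L^1_{loc}(\mathbb{R}^d)$; compact support upgrades this to $L^1(\mathbb{R}^d)$, and since $\tilde\psi \equiv 1$ near $x_0$, we get $B\mu \in L^1$ in a neighborhood of $x_0$.

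The conceptual content lies entirely in Theorem \ref{corWFL1}, which is used as a black box; the rest is bookkeeping with the elliptic parametrix, pseudo-locality, and composition of orders. The only delicate point is the careful nesting of cutoffs $\tilde\psi \prec \varphi$ needed to ensure both that off-diagonal operators like $\tilde\psi B(1-\varphi)$ are genuinely smoothing and that the parametrix remainder $R_0$ acts on all of $\mathrm{supp}(\tilde\psi)$. A minor subtlety is verifying that cutoffs can be absorbed into the classical pseudodifferential calculus (they are order-zero operators with smooth principal symbol), which is standard.
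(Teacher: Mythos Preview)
Your argument is correct. Both your proof and the paper's ultimately show that the localisation of $B\mu$ near $x_0$ has empty $L^1$ wave front set and then invoke Theorem~\ref{corWFL1}, but the routes to that emptiness differ. The paper proves Theorem~\ref{corellL2} as the scalar instance of Theorem~\ref{corellL1}: after localising $\mu$, it first uses elliptic regularity to obtain the intermediate gain $\mu\in W^{k-\epsilon,1}$, then writes $A(B\mu)=(AB-BA)\mu+B(A\mu)$ and exploits $[A,B]\in\Psi^{2k-1}$ together with the Sobolev gain to place $A(B\mu)$ in $\Psi^k L^1$, which by Remark~\ref{front-onde-rem}(3) forces $WF_{L^1}(B\mu)=\varnothing$. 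You instead build a parametrix $E\in\Psi^{-k}$ for $A$ on the support of the cutoff and write $\tilde\psi B\mu$ directly as $(\tilde\psi B\varphi EQ)f$ modulo smooth compactly supported errors coming from pseudo-locality and the $\Psi^{-\infty}$ remainder, landing in $\Psi^0_{\text{cl}}L^1$ with test operator $\mathrm{Id}$. Your approach is more elementary in the scalar case, bypassing both the preliminary $W^{k-\epsilon,1}$ regularity and the commutator computation; the paper's detour through the commutator identity is the price of a uniform argument that also covers the matrix-valued Theorem~\ref{corellL1} under the compatibility hypothesis~\eqref{commutateur}.
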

More generally, for vector valued measures, we have the following result.
\begin{theorem}\label{corellL1}
Consider $\mu\in \mathcal M_t(\mathbb R^d,\mathbb R^{l\times m})$. Let $k\in \mathbb{R}$ and $A\in M_{m\times m}(\Psi ^k_{\text{cl}}(\mathbb R^d))$be elliptic at $x_0\in \mathbb{R}^d$ in the sense 
$$
\ker (a_0(x_0,\xi)) =\{ 0_{\mathbb R^m}\} , \quad \forall\xi\in\mathbb S^{d-1},
$$
and let $B, C\in M_{m\times m}(\Psi ^k_{\text{cl}}(\mathbb R^d))$ such that the principal symbols of $A,B$ and $C$, 
$$ a_k(x,\xi) , b_k (x,\xi ) , c_k (x,\xi )\in M_{m\times m}$$ satisfy 
\begin{equation}\label{commutateur}
a_k(x, \xi) b_k (x, \xi) = c_k (x, \xi) a_k (x, \xi)  .
\end{equation}
 Assume that 
$$ B\mu \in \mathcal{M}_t ( \mathbb{R}^n, \mathbb{R}^m), A\mu \in  \Psi^0_{\text{cl}}\,L^1(\mathbb R^d, \mathbb{R}^m).$$
 Then in a neighborhood of $x_0$, 
 $$B\mu \in L^1 (\mathbb{R}^n, \mathbb{R}^m)
 .$$ 
\end{theorem}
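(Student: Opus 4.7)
The plan is to combine the microlocal characterization from Theorem~\ref{corWFL1} with a parametrix-type asymptotic expansion argument exploiting the commutator hypothesis~\eqref{commutateur}. Fix a neighborhood $U$ of $x_0$ on which $A$ remains elliptic (by continuity of $a_k$ and the assumption $\ker a_k(x_0,\xi)=\{0\}$ for all $\xi\in\mathbb S^{d-1}$), a cutoff $\chi\in C^\infty_c(U)$ equal to $1$ near $x_0$, and a bump $\phi\in C^\infty_c(U)$ with $\phi=1$ on $\{\chi\neq 0\}$. Since $B\mu\in \mathcal M_t$, the measure $\chi B\mu$ is compactly supported and tempered, so by Theorem~\ref{corWFL1} combined with the localization formula~\eqref{WFloc} it is enough to prove that
$$ (x_1,\xi_1,\omega_1)\notin WF_{L^1}(B\mu)\quad\text{for every }x_1\in\{\chi\neq 0\},\ \xi_1\neq 0,\ \omega_1\neq 0.$$

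As test operator I would take $A':=\phi(1-\Delta)^{-k/2}A\in M_{m\times m}(\Psi^0_{\text{cl}}(\mathbb R^d))$, using the square-matrix formulation allowed by item 2 of Remark~\ref{front-onde-rem}. Its principal symbol $\phi(x)(1+|\xi|^2)^{-k/2}a_k(x,\xi)$ has kernel equal to $\ker a_k(x,\xi)=\{0\}$ on $\{\phi\neq 0\}\times(\mathbb R^d\setminus 0)$, so for $x_1\in\{\chi\neq 0\}\subset\{\phi=1\}$, any $\xi_1\neq 0$ and any $\omega_1\neq 0$ one has $a'_0(x_1,\xi_1)\omega_1\neq 0_{\mathbb R^m}$, i.e.\ $A'$ is elliptic at every test point. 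The core step is then to produce an identity $A'(B\mu)=\sum_i Q'_i f'_i$ globally in $\Psi^0_{\text{cl}}L^1$. Hypothesis~\eqref{commutateur} forces $AB$ and $CA$ to share the same principal symbol at order $2k$, so $R^{(0)}:=AB-CA\in \Psi^{2k-1}_{\text{cl}}$. Using the invertibility of $a_k(x,\xi)$ on $U\times(\mathbb R^d\setminus 0)$, I would define inductively $c^{(j)}:=r^{(j-1)}a_k^{-1}\in S^{k-j}$ (with microlocal cut-offs to $U$), where $r^{(j-1)}$ is the principal symbol of the $j$-th residual at order $2k-j$; Borel summation then produces $\tilde C\in \Psi^k_{\text{cl}}$ such that
$$ AB=\tilde C A+\tilde R,\qquad \tilde R\in \Psi^{-\infty}\text{ microlocally above }U.$$

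Applying this identity to $\mu$ and inserting the hypothesis $A\mu=\sum_i Q_i f_i$ (componentwise, with $Q_i\in\Psi^0_{\text{cl}}$ and $f_i\in L^1$), one obtains
$$ A'(B\mu)=\sum_i\bigl(\phi(1-\Delta)^{-k/2}\tilde C Q_i\bigr)f_i + \phi(1-\Delta)^{-k/2}\tilde R\mu.$$
Each operator $\phi(1-\Delta)^{-k/2}\tilde C Q_i$ lies in $\Psi^0_{\text{cl}}(\mathbb R^d)$; pseudolocality of $(1-\Delta)^{-k/2}$ together with the smoothness of $\tilde R\mu$ on $U$ implies that $(1-\Delta)^{-k/2}\tilde R\mu$ is smooth on $U$, so after multiplication by $\phi\in C^\infty_c(U)$ the remainder is smooth and compactly supported, hence in $L^1(\mathbb R^d)$. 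This places $A'(B\mu)$ in $\Psi^0_{\text{cl}}L^1$ as required. The main obstacle I foresee is the parametrix-type microlocal iteration producing $\tilde C$: the relation $c^{(j)}=r^{(j-1)}a_k^{-1}$ only makes sense over the ellipticity region, so the successive symbols, their Borel summation, and the meaning of ``$\tilde R$ smoothing'' all have to be handled microlocally above $U$; the non-locality of $(1-\Delta)^{-k/2}$ must then be tamed by the pseudolocal property of pseudodifferential operators combined with the extra $x$-cutoff $\phi$, so that the final decomposition of $A'(B\mu)$ genuinely involves globally $L^1$ functions rather than merely local ones.
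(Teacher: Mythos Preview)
Your strategy is sound and the microlocalization issues you flag can indeed be handled, but the paper takes a markedly shorter route. The key simplification you miss is that the ellipticity of $A$ together with $A\mu\in\Psi^0_{\text{cl}}L^1$ already yields, via a standard parametrix for $A$ and Corollary~\ref{coro}, the a~priori regularity $\mu\in W^{k-\epsilon,1}$ near $x_0$ for every $\epsilon>0$. With this in hand no iteration is needed: the \emph{single-step} consequence $AB-CA\in\Psi^{2k-1}$ of~\eqref{commutateur} gives
\[
(AB-CA)\mu\in\Psi^{2k-1}\bigl(W^{k-\epsilon,1}\bigr)\subset\Psi^k\bigl(W^{1-2\epsilon,1}\bigr)\subset\Psi^k(L^1)
\]
as soon as $2\epsilon<1$, while $CA\mu=C(A\mu)\in\Psi^k(\Psi^0_{\text{cl}}L^1)=\Psi^k(L^1)$. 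Hence $A(B\mu)\in\Psi^k L^1$, so $WF_{L^1}(B\mu)=\varnothing$ near $x_0$ by Remark~\ref{front-onde-rem}(3), and the conclusion follows from Theorem~\ref{corWFL1}.

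Your infinite asymptotic construction of $\tilde C$ with $AB-\tilde CA$ microlocally in $\Psi^{-\infty}$ reaches the same endpoint without invoking the preliminary regularity of $\mu$, but pays for it with the Borel summation and the microlocal bookkeeping (nested cutoffs, pseudolocality of $(1-\Delta)^{-k/2}$ to turn the locally smooth remainder into a global $L^1$ function). Both arguments ultimately reduce to Theorem~\ref{corWFL1}; the paper's is shorter precisely because one order of gain in the commutator suffices once $\mu\in W^{k-\epsilon,1}$ is known.
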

\begin{remark}
A by-product of Theorem \ref{corellL1} with $A = \text{Id}$ is that counter-examples for the lack of continuity on $L^1$ of $0$-th order operators can occur only for operators $Q\in \Psi^0_{\text{cl}}$ sending an $L^1$ function outside $\mathcal M_t$. Indeed, if $Qf= \mu \in \mathcal{M}_t$ we deduce from Theorem~\ref{corellL1} with $A=B=C= \text{ Id}$ that  $\mu \in L^1_{loc}$.
\end{remark}
\begin{rem}
If $\mathcal{B}$ is scalar i.e. $\mathcal{B}=  B(x, D_x)\text{Id},$
then assumption~\eqref{commutateur} is automatically satisfied with $B=C$ which shows that Theorem~\ref{corellL1} implies Theorem~\ref{corellL2}. As a consequence, in the particular case $k \in \mathbb{N}$, $B\mu= \partial_x ^{\alpha_0}$, for some $\alpha \in \mathbb{N}^d,  |\alpha | =k$, we get (under the same ellipticity assumption on ${A}$),
$$   \partial_x ^{\alpha_0}\mu \in \mathcal{M}_t ( \mathbb{R}^n, \mathbb{R}^m), A\mu \in  \Psi^0_{\text{cl}}\,L^1(\mathbb R^d, \mathbb{R}^m) \Longrightarrow  \partial_x ^{\alpha_0}\mu \in L^1(\mathbb R^d, \mathbb{R}^m) .$$

For instance, consider $A\in M_{m\times m}(\Psi^0_{\text{cl}}(\mathbb R^d))$  elliptic for all $x\in \mathbb{R}^d$  and  $u\in BV(\mathbb{R}^d,\mathbb R^m)$, the set of $L^1$-functions with gradient a matrix-valued finite Radon measure. We recover the full $L^1$ regularity:
$$A\nabla u \in \Psi^0_{\text{cl}}\,L^1(\mathbb R^d,\mathbb R^d)\Longrightarrow u\in W^{1,1}_{\text{loc}}(\mathbb{R}^d,\mathbb R^m).$$
\end{rem}

Previously known $L^1$ elliptic regularity results assuming $u,Pu\in L^1_{loc}$, $P$ elliptic operator  of order $k$ involve loss of derivatives or requires the use of Besov spaces $B^{k,1}_\infty$, see \cite[Theorem~2.6 and Remark~2.7]{GGP}.  At the exact $L^1$ level,  the following counterexample from \cite[Example 7.5]{GM} shows that, in general some loss is anavoidable: on the disk of radius one, consider  the function 
$$u(x)=\log\log( e{|x|^{-1}})\in W^{1,1}(B_1),$$
which satisfies
$$\Delta u=-\frac1{|x|^2\log^2( e{|x|^{-1}})}\in L^1(B_1),\quad D^2 u\notin L^1(B_1).$$
 Note that this is (of course!) not in contradiction with Theorem~\ref{corellL2} as $D^2 u$ is not a  Radon measure.  Theorems \ref{corellL2} and \ref{corellL1} eliminate the loss {\em under the additional assumption that $\mu$ is a Radon measure}. To the best of our knowledge,  even for scalar equations ($m=1$), this result is new. Let us now give an example with  a fluid dynamics flavour.
\begin{example}
Let $u\in BV_{loc}(\mathbb{R}^d, \mathbb{R}^d)$. Assume that both $\divbis u$ and $\curl u $ are in $\Psi^0_{\text{cl}} ( L^1_{loc})$. Then  
$$ \nabla u\in L^1_{loc}(\mathbb{R}^d,\mathbb R^{d\times d}).$$
\end{example}
\noindent
\begin{proof}Replacing $u$ by $\chi u, \chi \in C^\infty_0$, we can assume that $u$ is compactly supported. Now we have the following equation for any derivative $\partial_iu$, which is a Radon measure by the BV assumption on $u$,
\begin{multline}
A(\partial_iu):=  \frac{1} { (1+ |D_x|^2)^{\frac 1 2}}\left(\begin{array}{cccccc}\partial_1 & \partial_2 &\partial_3 & \cdots & \cdots & \partial_d\\ \partial_2 & -\partial_1 &0 & \cdots & \cdots & 0 \\  0 & \partial_3 & -\partial_2 & 0 & \cdots & 0\\ \cdots & \cdots &\cdots & \cdots & \cdots & \cdots\\  0 & 0 & 0 & \cdots &\partial_{d} & -\partial_{d-1}\\ -\partial_d & 0 & 0 & \cdots & \cdots & \partial_1 \end{array}\right)
\partial_i u\\
=  \frac{ \partial_i} { (1+ |D_x|^2)^{\frac 1 2}}\begin{pmatrix} \divbis u \\ \curl u \end{pmatrix},
\end{multline}
so $A(\partial_iu)\in \Psi_{cl}^0 \,L^1 ( \mathbb{R}^d,\mathbb R)^{d+1},$
and a straightforward calculation shows that the constant coefficients operator $A\in M_{d+1\times d}(\Psi_{cl}^0(\mathbb R^d))$ is elliptic:
$$ \forall \xi \neq 0,\, \ker a_0(\xi) = \{0_{\mathbb R^{d+1}}\}.$$
We deduce, using the second point in Remark~\ref{front-onde-rem}  that $WF_{L^1} (\partial_i u) =\varnothing$ and then 
Theorem \ref{thWFL1} yields $ \partial_i u\in L^1_{loc}(\mathbb{R}^d,\mathbb R^{d}).$ We could also compose by $A^*$ and apply Theorem~\ref{corellL1} to the elliptic operators $A^*A$, and the choice $B =C  =\text{ Id}$.
\end{proof}
\begin{remark}\label{Radu}
In the particular case in which both $\divbis u$ and $\curl u $ are in $L^1_{loc} (\mathbb{R}^d)$ the result follows by applying Alberti's rank one theorem that ensures the existence of $a,b\in\mathbb R^d$ such that $\nabla u-a\otimes b |\nabla u|_s\in L^1$. Indeed this implies that $\divbis u-a.b |\nabla u|_s$ and $\curl u-a\wedge b |\nabla u|_s$ are in $L^1_{loc} (\mathbb{R}^d)$, so $a.b=a\wedge b=0$ thus $\nabla u\in L^1$.
\end{remark}

We now turn to an extension of De Philippis and Rindler's result~\cite[Theorem 1.1]{DePRi}. It was stated in  \cite[Remark 1.4 ]{DePRi} that the result extends easily to variable coefficients differential operators and "similar statements can be obtained if $\mu$ solves {\em some} pseudo-differential equations" by using 
$$\Lambda_{\mathcal A}(x)=\underset{|\xi|=1}{\bigcup}\ker (a(x,\xi)).$$
While the proof in \cite{DePRi} is indeed not  much perturbed by considering variable coefficients,  we shall handle here the case of {\em general} pseudodifferential operators that induce more serious issues due to their pseudo-local character.

 \begin{theorem}\label{th0}
Let $\mu\in\mathcal M(\mathbb{R}^d,\mathbb R^m)$. 
Let $A\in M_{n\times m}(\Psi^0_{\text{cl}}(\mathbb R^d))$ such that
$$A\mu\overset{\mathcal D'}{=}0_{\mathbb R^n}.$$
Then
$$
\frac{d\mu}{d|\mu|}(x)\overset{|\mu|_s-a.e}{\in}\Lambda_{A}(x). 
$$
\end{theorem}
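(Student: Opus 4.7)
The plan is to deduce Theorem~\ref{th0} from our main Theorem~\ref{thWFL1} by translating the constraint $A\mu=0$ into a microlocal statement: $WF_{L^1}(\mu)$ is contained in the characteristic variety $\{(x,\xi,\omega):a_0(x,\xi)\omega=0\}$. Once this inclusion is established, Theorem~\ref{thWFL1} forces, for $|\mu|_s$-a.e.\ $x$, the existence of some $\xi\in\mathbb S^{d-1}$ with $\frac{d\mu}{d|\mu|}(x)\in\ker a_0(x,\xi)\subseteq \Lambda_A(x)$, which is the conclusion.

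Since Theorem~\ref{thWFL1} is formulated for tempered measures, the first step is localization. For $\chi\in C^\infty_c(\mathbb R^d)$, the measure $\chi\mu$ is compactly supported and thus lies in $\mathcal M_t(\mathbb R^d,\mathbb R^m)$; Remark~\ref{front-onde-rem}(1) (or its localized variant) then reduces the task to showing that whenever $\chi(x_0)\neq 0$ and $a_0(x_0,\xi_0)\omega_0\neq 0_{\mathbb R^n}$, we have $(x_0,\xi_0,\omega_0)\notin WF_{L^1}(\chi\mu)$. Picking a row $A_j$ of $A$ with $a_{j,0}(x_0,\xi_0)\omega_0\neq 0$, the operator $A_j\in M_{1\times m}(\Psi^0_{\text{cl}})$ is elliptic at $(x_0,\xi_0,\omega_0)$ in the sense of Definition~\ref{defWFL1}, and the equation $A\mu=0$ yields
$$A_j(\chi\mu)=\chi A_j\mu+[A_j,\chi]\mu=[A_j,\chi]\mu,\qquad [A_j,\chi]\in\Psi^{-1}_{\text{cl}}(\mathbb R^d).$$

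The main obstacle, and precisely the pseudo-local difficulty flagged by the authors just before the statement, is to rewrite this right-hand side globally in the form $\sum_i Q_i f_i$ with $Q_i\in\Psi^0_{\text{cl}}$ and $f_i\in L^1(\mathbb R^d)$, rather than merely in $L^1_{loc}$. My approach is to introduce a wider cut-off $\tilde\chi\in C^\infty_c$ equal to $1$ on a neighborhood of $\operatorname{supp}\chi$ and split $\mu=\tilde\chi\mu+(1-\tilde\chi)\mu$. The first piece $\tilde\chi\mu$ is compactly supported and tempered, so $[A_j,\chi](\tilde\chi\mu)\in L^1(\mathbb R^d)$ by Corollary~\ref{coro} (negative-order classical pseudodifferential operators send $\mathcal M_t$ to $L^1$). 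For the second piece, the supports of $\chi$ and $(1-\tilde\chi)\mu$ are disjoint, so pseudo-locality of $[A_j,\chi]$ combined with the rapid off-diagonal decay of its Schwartz kernel gives a smooth, integrable contribution on a neighborhood of $x_0$, which, after composing with a final cut-off $\psi\in C^\infty_c$ equal to $1$ near $x_0$, can be absorbed into the required decomposition.

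Assembling the pieces, any $(x,\xi,\omega)\in WF_{L^1}(\chi\mu)$ with $\chi(x)\neq 0$ must satisfy $a_0(x,\xi)\omega=0$. Applying Theorem~\ref{thWFL1} to $\chi\mu$ then yields $\frac{d\mu}{d|\mu|}(x)\in \Lambda_A(x)$ for $|\mu|_s$-a.e.\ $x$ with $\chi(x)\neq 0$, and exhausting $\mathbb R^d$ by countably many cut-offs $\chi$ completes the proof.
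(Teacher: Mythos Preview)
Your approach is correct and follows the same route as the paper---deduce the result from Theorem~\ref{thWFL1}---but you do more work than necessary.  The paper's proof is three lines: since $\mu\in\mathcal M$ (bounded measures) is already in $\mathcal M_t$, Theorem~\ref{thWFL1} applies directly to $\mu$ itself and produces, for $|\mu|_s$-a.e.\ $x$, some $\xi_x$ with $\bigl(x,\xi_x,\tfrac{d\mu}{d|\mu|}(x)\bigr)\in WF_{L^1}(\mu)$; and because $A\mu=0$ is \emph{trivially} of the form $\sum_i Q_if_i$ (take every $f_i=0\in L^1$), the definition~\eqref{definiWFdir} of $WF_{L^1}$ immediately forces $\tfrac{d\mu}{d|\mu|}(x)\in\ker a_0(x,\xi_x)\subseteq\Lambda_A(x)$.

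Your localization to $\chi\mu$ is therefore superfluous, and it is precisely what drags you into the commutator analysis $A_j(\chi\mu)=[A_j,\chi]\mu$ and the splitting $\tilde\chi\mu+(1-\tilde\chi)\mu$.  That argument is valid (it is essentially the content of Remark~\ref{front-onde-rem}(1), which the paper uses at the start of Section~\ref{sectthWFL1} to reduce the proof of Theorem~\ref{thWFL1} itself to compactly supported $\mu$), but here it is redundant.  The ``pseudo-local difficulty'' the authors flag just before the statement refers to issues arising \emph{inside} the proof of Theorem~\ref{thWFL1}---controlling the non-local tails of pseudodifferential operators when passing to tangent measures---not to the short deduction of Theorem~\ref{th0} from it.
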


For $A$ of order $k\geq 0$ and for $Q$ or order less than or equal to $k$, the theorem extends to the more general equation $\mathcal A\mu=Qf$ for $f\in L^1$. In the same spirit, for $Q$ or order strictly less than $k$ the result extends to the more general equation $\mathcal A\mu=Q\sigma$ with $\sigma\in\mathcal M_t$ that can be treated viewing it as $(A,-Q)(\mu,\sigma)=0$ and using the fact that the principal symbol of $(A,-Q)$ is $(a_k,0)$.

We give here the short proof of Theorem \ref{th0} as a corollary of the main theorem.
Theorem \ref{thWFL1} gives us the existence for $|\mu|_s$-almost all $x$ of at least one $\xi_x\in\mathbb R^{d*}$ such that 
$$(x,\xi_x,\frac{d\mu}{d|\mu|}(x))\in WF_{L^1}(\mu).$$
As $A\mu\in \Psi^0_{cl}L^1$, from definition \eqref{definiWFdir} of $WF_{L^1}(\mu)$ we get $\frac{d\mu}{d|\mu|}(x)\in\ker a_0(x,\xi_x)$, thus the conclusion of Theorem~\ref{th0}.

We note that Theorem \ref{th0} does not imply trivially Theorem \ref{thWFL1}. Indeed, this would be the case if one could easily get
\begin{equation}\label{diff}\mu_s(\cap_{A\mu\in\Psi^0L^1}\,\Lambda_A(x))\setminus\cup_{|\xi|=1}\cap_{A\mu\in\Psi^0L^1}\ker (a_0(x,\xi))=0.\end{equation}
Thus the difference between the two theorems is encoded in this non-trivial information which derives from Propositions 3.1-3.3.

Theorem~\ref{th0}, that allows $0$-th order pseudodifferential operators, is relevant both for the scalar or vector valued measures {\em and} for elliptic equations, as illustrated by the previous elliptic regularity results. $0$-th order pseudodifferential operators appear naturally in harmonic analysis and PDEs, let us quote just some of the most famous ones: the Hilbert and Riesz transforms ($H=\frac{\nabla}{|\nabla|}, R_i=\frac{\partial_i}{|\nabla|}$), Hodge operator, Leray projector and the pressure operator ($\mathbb P=I-\nabla\frac{\nabla}{\Delta}, p(u)=\sum R_iR_j u_iu_j$)\footnote{Notice that homogeneous $0$-th order operators are strictly speaking not pseudodifferential operator due to the singularity at the origin in the $\xi$ variable. However, our theorems still apply in this context after composing by a frequency cut-off $(1- \chi( D_x))$, $\chi \in C^\infty_0$ equal to $1$ near $0$, which eliminates this singularity, while the new operator obtained by composition with $(1- \chi(D_x))$ has the same principal symbol.}. We want to emphasize that allowing $0$-th order pseudodifferential operators is not anecdotal:  in fluid mechanics, $0$-th order operators are ubiquitous and their lack of $L^1$ (or equivalently  $L^\infty$) boundedness is responsible for many pathological behaviours (see e.g. the double exponential growth of the complexity of solutions to two dimensional Euler equations~\cite{KiSv}).

Theorem 1.1 in \cite{DePRi} implied several major consequences. Our new version can be used to relax assumptions: for instance Theorem~\ref{th0} shows that Alberti's rank one theorem for gradient of BV functions (\cite{Al}, see also \cite{DeL}) also holds for measures whose curl is a combination of first order derivatives of $L^1$ functions:
\begin{multline}
\mu\in \mathcal M_t(\mathbb R^d,\mathbb R^{l\times d}),\; \curl  \mu= (\partial_{p} \mu_{i,q}- \partial_{q} \mu_{i,p} )_{i\leq l, p\neq q\leq d}\in \Psi^1_{\text{cl}} L^1(\mathbb R^d,\mathbb R)^{d^2l}\\
\Longrightarrow \frac{d\mu}{d|\mu|}(x)\overset{|\mu|_s-a.e}{\in}\{a\otimes \xi; \: a\in\mathbb R^l,\xi\in\mathbb R^{d*}\}.
\end{multline}

The result in \cite{DePRi} has been extended in \cite{A-RDePHiRi}; it would be interesting to see whether  these results of dimensional estimates and rectifiability can be viewed and analyzed from the microlocal analysis point of view.

Finally, the microlocal perspective also allows to extend some invariance properties for constrainted measures.  We prove some propagation type results in Section~\ref{sectpropag} which show that this point of view is useful and supports the interest of defining the $L^1$- wave front set notion.

\subsection{Sketch of the proof of Theorem \ref{thWFL1}}
The proof of Theorem \ref{thWFL1} goes as follows. Arguing by contradiction we consider the set $E$ of positive $|\mu|_s$-measure on which the inclusion \eqref{singpolincl} fails. 
We first use Proposition \ref{b} to construct at any point $x\in E$ an elliptic operator $B_x$ at $\bigl(x,\frac{d\mu}{d|\mu|}(x)\bigr)$ that smoothens $\mu$ in the $L^1$-sense of \eqref{WFL1}. Then we use the equation in \eqref{WFL1} as a starting point to implement an elliptic regularity strategy {\em \`a la De Philippis-Rindler~\cite{DePRi}} relying on a precise description of tangent measures near $\mu_s$-generic points. The main difficulties in this approach are the following:
\begin{itemize}
\item We need some regularity properties of the $L^1$ functions $f_i$ in~\eqref{WFL1} with respect to the $\mu_s$ measure. These properties are apriori valid only $|\mu|_s$-a.e., while the functions themselves depend on $x$; this issue is solved in Proposition \ref{eqcovbis}.
\item We need to handle the non locality of pseudodifferential operators, which are only {\em pseudo-local}, and this requires some uniform temperance properties of the scaled measures in the definition of the tangent measures.
\item Finally we need to handle the limiting case of $0$-th order operators with no gain of regularity. \\
\end{itemize}


The article is structured as follows.  In Section~\ref{sec.ex} we give some natural examples illustrating the relevance of $WF_{L^1}$. 
Section \ref{sectthWFL1} contains the proof of the main result, namely Theorem~\ref{thWFL1}. In Section \ref{sectcorWFl1} we give the short proofs that Theorem~\ref{thWFL1} implies Theorems \ref{corWFL1}, \ref{corellL2} and \ref{corellL1}. In Section \ref{sectpropag} we consider the propagation of singularities results. 
In Section \ref{sec.pseudo} are gathered the definitions and basic results on pseudodifferential operators that we use throughout the article. The last section contains general results of temperance properties for measures, obtained from the construction by Preiss in \cite{Pr}, which are important when dealing with pseudodifferential operators.\\

{\bf{Aknowledgements: }} We are grateful to Guido de Philippis for the lectures he gave in Paris in 2017 that allowed us to learn about these topics. We would like to thank Fr\'ed\'eric Bernicot for enlightenments about weak $L^1$ estimates for pseudodifferential operators, and Radu Ignat for pointing us the Remark \ref{Radu}. We would also like to thank the referee for the improving suggestions. \par
We are grateful to the Institut Universitaire de France for the ideal research conditions offered by their memberships. The first author was also partially supported by the French ANR project SingFlows ANR-18-CE40-0027,  while the second author was also partially supported by ANR project ISDEEC  ANR-16-CE40-0013.



\section{Examples}\label{sec.ex}
In this section we give some examples of measures for which we can describe the $L^1$ wave front, $WF_{L^1}$. We illustrate  the relevance of both the cotangent variable $\xi$ and the polarisation variable $\omega$ by providing examples for which the dependance with respect to these variables is non trivial. 
\begin{prop}
\begin{enumerate}
\item \label{0}
Contrarily to the usual wave fronts defined in microlocal analysis for distributions, in general, the natural implication
\begin{equation}\label{regu}
WF_{L^1}(u) = \varnothing \Rightarrow u \in L^1_{loc},
\end{equation}
is {\em not true}. There exists distributions $\mu \in \mathcal{S}'(\mathbb{R}^d)$ which are not in $L^1_{loc}$ but such that 
$$ WF_{L^1}(\mu)= \varnothing .$$

\item   \label{1} Let $\mu \in L^1_{loc}(\mathbb R^d,\mathbb R^m)$. Then $WF_{L^1}( \mu)=\varnothing$ (and according to Theorem~\ref{corWFL1} the converse is true if $\mu \in \mathcal{M}_t(\mathbb R^d,\mathbb R^m)$).
\item  \label{2}Let $\mu= (\mu_1, \mu_2) , \mu_1 \in L^1_{loc} ( \mathbb{R}^d; \mathbb{R}^{m_1}) $ and  $\mu_2\in \mathcal{M}_t( \mathbb{R}^d; \mathbb{R}^{m_2})$.
Then 
$$ (x, \xi, \omega) \in WF_{L^1} (\mu) \Leftrightarrow  (x,\xi,\omega_2) \in  WF_{L^1}(\mu_2) \text{ and } \omega= (0_{\mathbb R^{m_1}}, \omega_2).$$
\item  \label{3} Let $x_0\in\mathbb R^d, \omega_0 \in \mathbb{R}^{m*}$ and $\mu=\omega_0 \delta_{x_0}$. Then 
$$ WF_{L^1} (\mu) = \{ (x_0,\xi, \omega); \xi\in\mathbb R^{d*}, \omega \in \mathbb{R^*}\omega_0\}$$
\item  \label{4}Let $ \Sigma $ a smooth $d_1$ dimensional embedded submanifold of $\mathbb{R}^d$, and $\mu$ the scalar surface measure on $\Sigma$. Then 
$ WF_{L^1} (\mu)$ is the set of points $(x, \xi) \in \mathbb R^d\times\mathbb R^{d*} $ such that $x\in \Sigma$ and $\xi$ is (co)-normal to $\Sigma$: 
$$ \forall X \in T_x \Sigma, \langle \xi, X \rangle =0.$$
 \item  \label{5}Let $(e_p)_{p=1}^m$ be the canonical basis of $\mathbb{R}^m$. We consider a sequence $\{x_n\}_{n\in\mathbb N}$ of pairwise disjoint points such that  $x_n \overset{n\rightarrow + \infty}{\longrightarrow}  x_0$ and 
 $$\mu = \sum_{n\in\mathbb N} 2^{-n} \delta_{x_n} \omega_n,\qquad  \omega_n =  e_p, \text{ if } n\overset{}\equiv p \,  (\text{mod } m).$$
 Then 
 $$ WF(\mu) =\bigcup_{n\geq 1} \{x_n\} \times\mathbb R^{d*} \times\mathbb{R^*} \omega_n \,\bigcup\, \{x_0\} \times\mathbb R^{d*} \times\mathbb R^{m^*}.$$
 \end{enumerate}
\end{prop}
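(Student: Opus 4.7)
The plan is to verify the six items in turn, using the definition of $WF_{L^1}$, the localisation identity~\eqref{WFloc}, the closedness and $\omega$-linearity of Remark~\ref{front-onde-rem}(5), and Theorems~\ref{thWFL1} and~\ref{corWFL1}.

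\textbf{Items (1)--(3).} For (1), the discussion following Theorem~\ref{corellL1} produces $Q\in\Psi^0_{\text{cl}}(\mathbb R^d)$ and $f\in L^1(\mathbb R^d)$ with $u:=Qf\notin L^1_{loc}$ (any standard counter-example to the $L^1$-boundedness of a $0$-th order pseudodifferential operator, e.g.\ a truncated Riesz transform applied to a suitable $L^1$ function); then $A:=\mathrm{Id}$ is elliptic everywhere and the decomposition $u=\mathrm{Id}\cdot(Qf)$ gives $WF_{L^1}(u)=\varnothing$. For (2), given $(x_0,\xi_0,\omega_0)$ with $\omega_0\neq 0$, pick $\chi\in C^\infty_c$ equal to $1$ near $x_0$ and test with the constant row $A:=\omega_0^\top$: ellipticity reads $|\omega_0|^2\neq 0$ and $A(\chi\mu)=\omega_0\cdot(\chi\mu)\in L^1$, so~\eqref{WFloc} concludes. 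For (3), decompose $A=(A_1\mid A_2)$ and $\omega=(\omega_1,\omega_2)$ along $\mathbb R^m=\mathbb R^{m_1}\oplus\mathbb R^{m_2}$: if $\omega_1\neq 0$ the constant row $(\omega_1^\top,0)$ is elliptic at $(x,\xi,\omega)$ and sends $\chi\mu$ into $\omega_1\cdot(\chi\mu_1)\in L^1$; if $\omega_1=0$ the ellipticity of $A$ at $(x,\xi,(0,\omega_2))$ reduces to that of $A_2$ at $(x,\xi,\omega_2)$, while $A_1(\chi\mu_1)\in L^1_{loc}\subset\Psi^0_{\text{cl}}L^1$ is automatically absorbable into the right-hand side, so $A(\chi\mu)\in\Psi^0_{\text{cl}}L^1 \iff A_2(\chi\mu_2)\in\Psi^0_{\text{cl}}L^1$, which is exactly the announced equivalence.

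\textbf{Item (4).} By~\eqref{WFloc} the fibre is empty outside $x_0$. At $x_0$, if $\omega\notin\mathbb R\omega_0$, select $\omega'\in\mathbb R^{m*}$ with $\omega'(\omega_0)=0$ and $\omega'(\omega)\neq 0$: the constant scalar operator $A:=\omega'$ annihilates $\mu$ and is elliptic at $(x_0,\xi,\omega)$. For $\omega\in\mathbb R^*\omega_0$, by $\omega$-linearity we may treat $\omega=\omega_0$. Arguing by contradiction, suppose $(x_0,\xi_0,\omega_0)\notin WF_{L^1}(\mu)$, witnessed by $A\mu=Qf$ with $A\in M_{1\times m}(\Psi^0_{\text{cl}})$, $Q\in\Psi^0_{\text{cl}}$, $f\in L^1$ and $a_0(x_0,\xi_0)\omega_0\neq 0$. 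For any orthogonal transformation $R$ of $\mathbb R^d$ fixing $x_0$, push the equation forward by $R$: using $R_*\mu=\mu$ and the diffeomorphism invariance of the classical pseudodifferential calculus, the conjugates $A_R:=R_*AR_*^{-1}$ and $Q_R:=R_*QR_*^{-1}$ lie in $\Psi^0_{\text{cl}}$, and $f_R:=R_*f\in L^1$, so $A_R\mu=Q_Rf_R$. The transported principal symbol makes $A_R$ elliptic at $(x_0,R^{-1}\xi_0,\omega_0)$, hence $(x_0,R^{-1}\xi_0,\omega_0)\notin WF_{L^1}(\mu)$. As $R$ ranges over all rotations fixing $x_0$, and using the conicality of $WF_{L^1}$ in $\xi$, we exhaust $\mathbb R^{d*}\setminus\{0\}$. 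Combined with the case $\omega\notin\mathbb R\omega_0$ this forces $\Pi_1(WF_{L^1}(\mu))=\varnothing$, contradicting $\mu\notin L^1_{loc}$ via Theorem~\ref{corWFL1}.

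\textbf{Items (5) and (6).} For (5), if $\xi$ is not conormal at $x\in\Sigma$, pick a smooth tangent vector field $X$ with $\langle X(x),\xi\rangle\neq 0$: the invariance of the surface measure under the flow of $X$ gives $\partial_X\mu=c\,\mu$ with $c$ a smooth function on $\Sigma$, hence $A:=(1+|D|^2)^{-1/2}\partial_X\in\Psi^0_{\text{cl}}$, elliptic at $(x,\xi)$, sends $\chi\mu$ into $(1+|D|^2)^{-1/2}(c\chi\mu)+R(\chi\mu)$ with $R\in\Psi^{-1}_{\text{cl}}$, which belongs to $L^1$ by Corollary~\ref{coro}. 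For $\xi$ conormal, one first straightens $\Sigma$ by a local diffeomorphism (preserving $\Psi^0_{\text{cl}}$): on the flat model $\mathbb R^{d_1}\times\{0\}$ the surface measure is invariant under all rotations of the normal variables, so replaying the argument of (4) in those $(d-d_1)$ variables---together with Theorem~\ref{thWFL1} producing at least one $\xi$ in the fibre, which must be conormal by the previous step---places every conormal direction in $WF_{L^1}(\mu)$. For (6), at each isolated atom $x_n$ with $n\geq 1$ a cut-off shows that $\mu$ locally coincides with $2^{-n}\omega_n\delta_{x_n}$, so item (4) and~\eqref{WFloc} determine the fibre to be $\mathbb R^{d*}\times\mathbb R^*\omega_n$. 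At $x_0$, for each $p\in\{1,\dots,m\}$ the subsequence $n_k\equiv p\pmod m$ satisfies $\omega_{n_k}=e_p$ and $x_{n_k}\to x_0$, so closedness of $WF_{L^1}(\mu)$ transports $(x_{n_k},\xi,e_p)$ to $(x_0,\xi,e_p)\in WF_{L^1}(\mu)$ for every $\xi\neq 0$; the linearity in $\omega$ of Remark~\ref{front-onde-rem}(5) then propagates this to the full $\mathbb R^{m*}\setminus\{0\}$. Outside $\{x_n\}\cup\{x_0\}$ the measure vanishes locally, and~\eqref{WFloc} gives an empty fibre.

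\textbf{Main obstacle.} The delicate step is the rotation argument in (4) and its normal-variable adaptation in (5). One must carefully verify that conjugating $A\mu=Qf$ by a smooth diffeomorphism preserves the classes $\Psi^0_{\text{cl}}$ and $L^1$ and transports the ellipticity condition to the rotated base point---this is the coordinate invariance of the classical pseudodifferential calculus. Once this functoriality is in place, the symmetry of $\omega_0\delta_{x_0}$ (resp. of the flat surface measure) allows us to sweep $\xi_0$ throughout $\mathbb R^{d*}\setminus\{0\}$ (resp. throughout the full conormal bundle), and the contradiction with Theorem~\ref{corWFL1} closes the argument.
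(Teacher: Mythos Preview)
Your proof is correct and, for items (1)--(4), follows essentially the same lines as the paper (localisation, constant row vectors, rotation invariance combined with Theorem~\ref{corWFL1}). Two points are worth noting where your route differs.

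For item~(5), to show that non-conormal directions are absent from $WF_{L^1}(\mu)$, the paper flattens $\Sigma$ and performs integration by parts in the oscillatory integral for $\chi(D_x)\mu$, gaining decay in $|\xi_1|$ and hence showing $\chi(D_x)\mu\in C^\infty$. Your tangent-vector-field argument is a legitimate alternative: the relation $\partial_X\mu=c\,\mu$ exhibits a \emph{first-order} operator elliptic in any non-conormal direction whose action on $\mu$ is of order zero, so composing with $(1+|D|^2)^{-1/2}$ lands in $\Psi^{-1}(\mathcal M_0)\subset L^1$ via Corollary~\ref{coro}. This is more geometric and avoids flattening for the inclusion step; the paper's approach is more explicit and yields the stronger conclusion $\chi(D_x)\mu\in C^\infty$. (One minor clean-up: your decomposition should read $(1+|D|^2)^{-1/2}\bigl((c\chi+\partial_X\chi)\mu\bigr)$ rather than $(1+|D|^2)^{-1/2}(c\chi\mu)+R(\chi\mu)$, but the conclusion is unaffected.)

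For item~(6) at $x_0$, the paper argues by contradiction: if $(x_0,\xi_0,e_p)\notin WF_{L^1}(\mu)$, the witnessing operator $A$ remains elliptic at nearby $(x_n,\xi_0,e_p)$, and localising at such $x_n$ with $n\equiv p\pmod m$ contradicts item~(4). Your use of the closedness from Remark~\ref{front-onde-rem}(5) is the contrapositive of the same idea and is equally valid.
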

\begin{proof} 
 To prove ~\eqref{0},  recall that general pseudodifferential operators of order $0$ are not bounded on $L^1$.  Taking for example the function $u= D^2 w$ and $f$ from~\eqref{contre-ex}, we have  $f\in L^1_{comp}$, $Q= D^2 (- \Delta+1)^{-1}\in \Psi^0_{\text{cl}}$ such that $Qf = u \notin L^1$. On one hand this implies by definition that $WF_{L^1}(u) =0_{\text{pol}}$, while  $u \notin L^1_{loc}$ (because $u$ is a compactly supported distribution  but is not in $L^1$). 
However, from Theorem~\ref{corWFL1}, if we assume in addition  $u \in \mathcal{M}_t$, then  property~\eqref{regu} is true. 

Example~\eqref{1} is trivial, as in the definition of $WF_{L^1}$,  we can choose for any $x_0\in\mathbb R^d$ and $\omega_0 \in \mathbb{R}^{m*}$, the multiplication operator  $ A= \chi(x)\, ^t \omega_0 $ with $\chi\neq 0$ in a neighborhood of~$x_0$. 

To prove Example~\eqref{2}, we consider first $(x,\xi,\omega)$ with $\omega=(\omega_1,\omega_2)$ such that $\omega_1\neq 0_{\mathbb R^{m_1}}$. Assume for example that the first coordinate of $\omega_1$ does not vanish.
Let 
$$ A= \begin{pmatrix} 1, 0, \dots, 0\end{pmatrix} \in M_{1\times m} ( \mathbb{R}), $$
and $f$ be the first component of the vector valued function $\mu_1$. Then we have 

$$ A \mu = f\in L^1_{\text{loc}}(\mathbb R^d,\mathbb R),$$ 
and the constant operator $A$ is elliptic at $(x, \xi, \omega)$ because the first coordinate of $\omega_1$ does not vanish. 
We deduce that $(x, \xi, \omega)\notin WF_{L^1} (\mu).$ Let now  $(x, \xi,0_{\mathbb R^{m_1}},\omega_2)\notin WF_{L^1} (\mu).$ By definition, there exists a $A, Q_j, f_j $ as in Definition~\ref{defWFL1}  such that $A$ is elliptic at $(x, \xi, 0,\omega_2)$ and 
$$ A \mu= \sum_j Q_j f_j.$$ 
which implies by linearity
$$ A  \begin{pmatrix} 0 \\ \mu_2 \end{pmatrix}= \sum_j Q_j f_j -A \begin{pmatrix} \mu_1 \\ 0 \end{pmatrix}$$ 
or equivalently 
$$ A_2  \mu_2 = \sum_j Q_j f_j +A_1 \mu_1,  \qquad A = (A_1,A_2).$$ 
Since by assumption $\mu_1\in L^1$, to conclude that $(x, \xi, \omega_2) \notin WF_{L^1}(\mu_2)$. It remains to check that if $A$ is elliptic at $(x, \xi, 0_{\mathbb R^{m_1}}, \omega_2)$ then $A_2$  is elliptic at $(x, \xi, \omega_2)$, which is clear from 
$$ a(x,\xi) \begin{pmatrix} 0\\ \omega_2\end{pmatrix} =a_2 (x, \xi) (\omega_2).$$ 
Conversely, let now  $(x, \xi, \omega_2)\notin WF_{L^1} (\mu_2).$ By definition, there exists a $A_2, Q_j, f_j $ as in Definition~\ref{defWFL1}  such that $A_2$ is elliptic at $(x, \xi, \omega_2)$ and 
$$A_2 \mu_2= \sum_j Q_j f_j.$$ 
With $A=(0_{\mathbb R^{m_1}},A_2)$ we have for any $\omega_1\in\mathbb R^{m_1}$:
$$A \begin{pmatrix} \mu_1 \\ \mu_2 \end{pmatrix} = A_2 \mu_2,\quad a(x,\xi)(\omega_1,\omega_2)=a_2(x,\xi)\omega_2\neq 0,$$
and we obtain that $(x, \xi, \omega_1,\omega_2)\notin WF_{L^1} (\mu).$

 For example~\eqref{3}, it is clear from Example ~\eqref{1} that if $(x,\xi, \omega) \in WF_{L^1}(\omega_0\delta_{x_0})$ then $x=x_0$. Let us assume that $\omega \notin \mathbb{R} \omega_0$. Choosing for $A$ the multiplication by a constant $1\times m$ vector orthogonal to $\omega_0$ but not to $\omega$, we get $A\omega_0\delta_{x_0}=0$ and $a(x_0,\xi)\omega\neq 0$ so $(x_0, \xi, \omega) \notin WF_{L^1}(\omega_0\delta_{x_0}), \forall \xi\in\mathbb R^{d*}$. As a consequence we have 
$$ WF_{L^1} (\omega_0\delta_{x_0}) \subseteq \{ (x_0,\xi, \omega); \xi\in\mathbb R^{d*}, \omega \in \mathbb{R^*}\omega_0\}.$$ 
The equality follows from the fact that $WF_{L^1}(\omega_0\delta_{x_0}) \neq \varnothing$ and $WF_{L^1}(\omega_0\delta_{x_0})$ is invariant under rotations in the $\xi$ variables, as so does $\hat \delta_{x_0}$.

For example~\eqref{4}, we perform a change of variables and are reduced to the case when 
$$ \Sigma = \{ (x_1, 0_{\mathbb R^{d_2}}) ; x_1\in\mathbb R^{d_1}\},  \quad \mu= g(x_1)  \delta_{x_2 = 0_{\mathbb R^{d_2}}},\, g\in C^\infty(\mathbb{R}^{d_1},\mathbb R),$$
where the function $g$ is positive. Let us apply a pseudodifferential operator $\chi(D_x)$ where $\chi$ is supported in a conic neighborhood of a point $\xi=(\xi_1, \xi_2)$ with $\xi_1\neq 0_{\mathbb R^{d_1}}$, small enough so that on the support of $\chi$ we have $|\xi| \leq C |\xi_1|$. We have 
$$ \chi(D_x) \mu = \frac{1} {(2\pi)^d} \int e^{i(x_1- y_1) \cdot \xi_1 + x_2 \cdot \xi_2 } \chi(\xi) g(y_1)dy_1 d\xi,$$
and integrations by parts using $\frac{\partial}{\partial {y_1}} e^{-iy_1\xi_1} = -i \xi_1  e^{-iy_1\xi_1}$ gain arbitrary inverse powers of $|\xi_1|$ (hence of $|\xi|$) which shows that $\chi(D_x) \mu \in C^\infty$. We deduce the inclusion
$$ WF_{L^1} ( \mu) \subset \{ (x_1, 0_{\mathbb R^{d_2}}, 0_{\mathbb R^{d_1}},\xi_2) ; x_1\in\mathbb R^{d_1}, \xi_2\in \mathbb R^{d_2}\}.$$
To get the equality, we first remark that it is enough to study the case above when $g= 1$, because we can multiply all pseudodifferential operators by the smooth function $g(x_1) ^{-1}$ to the right. According to Theorem~\ref{corWFL1},  $\Pi_1 WF_{L^1}$ must have a non trivial intersection with the set $\{ (x_1,0_{\mathbb R^{d_2}}); x_1\in\mathbb R^{d_1}\}$ because $\mu$ is not $L^1$ near this point. We get the equality simply because in that particular case $g_1= 1$, $WF_{L^1}$ is clearly invariant by rotations of the variable $\xi_2$ as so does $\delta_{0_{\mathbb R^{d_2}}}$.

Let us now turn to Example~\eqref{5}. Since the points $x_n$ are pairwise disjoint and converging to $x_0$, all the points $x_n, n\geq1$ are isolated in the sequence and localising $\mu$ near each point $x_n$ (i.e. considering $\mu_n= \chi_n \mu_n$, with $\chi_n=1$ near $x_n$ and $\chi_n=0$ in a neighborhood of $x_k, k\neq n$),  we get, according to Example~\eqref{3} that 
$$ WF(\mu)  \cap \{ (x,\xi, \omega); x\neq x_0\} =  \bigcup_{n\geq 1}\{x_n\}\times\mathbb R^{d*} \times \mathbb{R^*} \omega_n.$$
To conclude, it remains to prove that 
$$ WF(\mu) \cap \{ (x,\xi, \omega); x= x_0\} =\{x_0\}\times\mathbb R^{d*}  \times \mathbb{R}^{m*}.$$ 
By linearity with respect to the $\omega$ variable, it is actually enough to prove that 
$$ (x_0, \xi, e_p) \in WF_{L^1}(\mu), \forall \xi\in\mathbb R^{d*}, 1\leq p\leq m.$$
We argue by contradiction. Assume there exists $\xi_0$ and $p$ such that $ (x_0, \xi_0, e_p) \notin WF_{L^1}(\mu).$ Then there exists $A,Q_j\in \Psi^0_{\text{cl}}$ matrices of pseudodifferential operators , $f_j\in L^1$ such that 
$$A \mu = \sum_j Q_jf_j, \quad a_0 (x_0,\xi_0) e_p \neq 0.$$ 
 We deduce with $\chi_n$ as above,
 $$ A \chi_n\mu= \chi_n Q f + [A, \chi_n]  \mu= Q_n f + g, g \in L^1.$$
 But for $n$ sufficiently large, $x_n$ is arbitrarily close to $x_0$,  we get 
 $$ a(x_n, \xi_0)  e_p \neq 0.$$ 
 We deduce that 
 $$(x_n, \xi_0,  e_p) \notin WF_{L^1}(\chi_n  \mu) = WF_{L^1}(2^{-n} \omega_n\delta_{x_n} )= \{x_n\}\times\mathbb R^{d*} \times \mathbb{R^*} \omega_n,
 $$ which is a contradiction if we choose $n=p \,(\text{mod } m)$ so that $\omega_n = e_p$.
\end{proof}
 In example ~\eqref{5}, we have $\mu = \mu_s$ and 
$$ |\mu|_s = \sum_{n\in\mathbb N} 2^{-n} \delta_{x_n}, \qquad \frac{d\mu}{d|\mu|} (x)= \sum_{n\in\mathbb N} 1_{x=x_n} \omega_n.$$
As a consequence, this example shows that in Theorem~\ref{thWFL1}, the inclusion is, in general {\em not} an equality. 
This is an indication that the wave front we define in the present work, though sufficient for the applications of our present work, might  need to be refined for further applications.

\section{Proof of Theorem \ref{thWFL1}}\label{sectthWFL1} 
Let us recall that all the results are local. Indeed, following the argument in Remark~\ref{front-onde-rem}, we have
$$A \mu = Qf \Rightarrow \widetilde{\chi} A (\chi \mu) = \chi Q f + \widetilde{\chi} [ A, \chi] \mu,$$ 
and  $\widetilde{\chi} [ A, \chi] \mu \in L^1$. Using \eqref{WFloc} and its explanation, we shall assume without loss of generality that $\mu$ is compactly supported \footnote{If one wants to prove directly Theorem \ref{th0}, without using Theorem \ref{thWFL1} but starting from \S 3.3, then the localisation of the measure induce only a commutator term in the right-hand-side, that is of the same type as the initial constraint in view of the explanations of \eqref{WFloc}.}.
We argue by contradiction, and we suppose that the conclusion of Theorem \ref{thWFL1} does not hold:
$$|\mu|_s(E)>0,$$
where
$$
E:=\{x;\;\bigl(x,\frac{d\mu}{d|\mu|}(x)\bigr)\in(\Pi_{13}WF_{L^1}(\mu))^{\mathsf{c}}\}.
$$
\smallskip

We start by proving a general property of $WF_{L^1}$ that will be the starting point of our approach.
\subsection{An elliptic characterisation of  \texorpdfstring{$\Pi_{13}(WF_{L^1})$}{the L1 wave front}}\label{sectb}

In the following Proposition we give a characterisation of the elements in $ (\Pi_{13}(WF_{L^1}(u))^c$ for $u\in\mathcal {S}'(\mathbb{R}^d,\mathbb R^m)$.
\begin{prop}\label{b}  For $u\in\mathcal {S}'(\mathbb{R}^d,\mathbb R^m)$ the set $ (\Pi_{13}(WF_{L^1}(u))^c$ coincides with 
 \begin{multline}\label{WFL1}
\Bigl\{(x,\omega)\in\mathbb{R}^d\times\mathbb R^{m*};\:\exists B\in M_{1\times m}(\Psi^0_{\text{cl}}(\mathbb R^d))\mbox{ elliptic at }(x,\omega),
 \\
  N \in\mathbb N^*, Q_i\in \Psi^0_{\text{cl}}(\mathbb R^d),\,f_i\in L^1(\mathbb{R}^d,\mathbb R), \forall 1\leq i\leq N ,\; Bu =\sum_{i=1}^NQ_i f_i \Bigr\}.
  \end{multline}
 Here ellipticity at $(x,\omega)$ means 
\begin{equation}\label{elldef}
b_{0}(x,\xi)\omega\neq 0, \quad \forall\xi\in\mathbb S^{d-1}.
\end{equation}
\end{prop}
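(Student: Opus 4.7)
The plan is to prove the two inclusions making up the claimed equality. The inclusion $\supseteq$ is essentially tautological: if $B$ and $(Q_i,f_i)_{i=1}^N$ place $(x,\omega)$ in the set \eqref{WFL1}, with $B$ elliptic at $(x,\omega)$ in the sense of \eqref{elldef}, then for every $\xi\in\mathbb S^{d-1}$ we have $b_{0}(x,\xi)\omega\neq 0$, so taking $A=B$ in Definition \ref{defWFL1} gives $(x,\xi,\omega)\notin WF_{L^1}(u)$, and therefore $(x,\omega)\notin \Pi_{13}(WF_{L^1}(u))$.

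For the nontrivial inclusion $\subseteq$, assume $(x,\omega)\notin \Pi_{13}(WF_{L^1}(u))$. Then for every direction $\zeta\in\mathbb S^{d-1}$, Definition \ref{defWFL1} supplies an operator $A_\zeta\in M_{1\times m}(\Psi^0_{\text{cl}}(\mathbb R^d))$ with principal symbol $a_\zeta(x,\xi)$ and a representation $A_\zeta u=\sum_{i} Q_{\zeta,i}f_{\zeta,i}$ satisfying $a_\zeta(x,\zeta)\omega\neq 0$. Smoothness of $a_\zeta$ on $\mathbb R^d\times(\mathbb R^d\setminus\{0\})$ yields an open conic neighborhood $V_\zeta\subset\mathbb R^d\setminus\{0\}$ of the ray through $\zeta$ on which $a_\zeta(x,\xi)\omega$ does not vanish and stays in an open halfplane of $\mathbb C$. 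By compactness of $\mathbb S^{d-1}$, I extract finitely many directions $\zeta_1,\dots,\zeta_k$ with cones $V_j:=V_{\zeta_j}$ covering $\mathbb R^d\setminus\{0\}$, operators $A_j:=A_{\zeta_j}$ (with principal symbols $a_j$), and unit scalars $\lambda_j$ such that $\mathrm{Re}(\lambda_j a_j(x,\xi)\omega)>0$ for every $\xi\in V_j$.

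Next I would choose a smooth partition of unity $(\chi_j)_{j=1}^k$ on $\mathbb S^{d-1}$ subordinate to $(V_j\cap\mathbb S^{d-1})_{j}$, extend each $\chi_j$ to $\mathbb R^d\setminus\{0\}$ by homogeneity of degree $0$, and turn it into a classical symbol of order $0$ via $\tilde\chi_j(\xi):=(1-\eta(\xi))\chi_j(\xi)$ for some fixed $\eta\in C^\infty_c(\mathbb R^d)$ with $\eta\equiv 1$ near $0$ and $\eta\equiv 0$ on $\mathbb S^{d-1}$. Setting
$$
B:=\sum_{j=1}^k \lambda_j\,\tilde\chi_j(D_x)\,A_j\in M_{1\times m}(\Psi^0_{\text{cl}}(\mathbb R^d)),
$$
the principal symbol on the unit sphere reads $b_{0}(x,\xi)\omega=\sum_j\chi_j(\xi)\lambda_j a_j(x,\xi)\omega$, whose real part is strictly positive by the choice of the $\lambda_j$ and since the $\chi_j$ form a partition of unity; thus $B$ is elliptic at $(x,\omega)$ in the sense of \eqref{elldef}. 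Furthermore
$$
Bu=\sum_{j,i}\bigl(\lambda_j\,\tilde\chi_j(D_x)\,Q_{\zeta_j,i}\bigr)\,f_{\zeta_j,i},
$$
which the composition rule of $\Psi^0_{\text{cl}}$ presents as a finite sum $\sum_\ell Q'_\ell f'_\ell$ with $Q'_\ell\in\Psi^0_{\text{cl}}(\mathbb R^d)$ and $f'_\ell\in L^1(\mathbb R^d,\mathbb R)$. Hence $(x,\omega)$ belongs to the set \eqref{WFL1}, completing the proof.

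The main obstacle is precisely the passage from a family of operators each elliptic only at one cotangent direction to a single operator elliptic in every direction at the fixed polarization $\omega$; it is overcome by combining a microlocal partition of unity in $\xi$ with the phase corrections $\lambda_j$, which prevent cancellations when the local principal symbols $a_j(x,\xi)\omega$ are superimposed.
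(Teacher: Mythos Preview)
Your proof is correct and follows essentially the same route as the paper: cover $\mathbb S^{d-1}$ by finitely many cones on which a local operator is elliptic in the direction $\omega$, multiply by a sign (or phase) to prevent cancellation, and patch together with frequency cut-offs. The only cosmetic differences are that the paper uses real signs $\delta_{\xi}\in\{\pm 1\}$ (the principal symbols there are real-valued, so your complex unit scalars $\lambda_j$ may be taken in $\{\pm 1\}$) and non-negative cut-offs equal to $1$ on a subcover rather than a genuine partition of unity; your explicit verification of the trivial inclusion $\supseteq$ is a welcome addition.
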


\begin{proof}[Proof of Proposition \ref{b}]
Let $(x,\omega)\in (\Pi_{13}(WF_{L^1}(u)))^c$. Then by definition for all $\xi\in\mathbb S^{d-1}$ we have the existence of $N_\xi\in\mathbb N^*, B_\xi\in M_{1\times m}(\Psi^0_{\text{cl}}), Q_{\xi,i} \in \Psi^0_{\text{cl}}(\mathbb R^d)$ and $f_{\xi,i}\in L^1(\mathbb{R}^d,\mathbb R), \forall 1\leq i\leq N_\xi$ such that $B_\xi u=\sum_{i=1}^{N_\xi}Q_{\xi,i} f_{\xi,i}$, with a principal symbol $b_{0,\xi}$ satisfying
$$b_{0,\xi} (x,\xi)\omega\neq 0.$$
By continuity we can find a conical neighborhood $V_\xi\in\mathcal V_c(\xi)$ such that
$$b_{0,\xi}(x,\eta)\omega\neq 0,\quad \forall \eta\in V_\xi.$$
and moreover $\delta_\xi\in\{\pm 1\}$ such that
\begin{equation}\label{ellconstr}\delta_\xi b_{0,\xi}(x,\eta)\omega>0,\quad \forall \eta\in V_\xi.\end{equation}
Let $\chi_\xi$ be a non-negative smooth cut-off with support in $V_\xi$, valued one on  a conical neighborhood $\tilde V_\xi\in\mathcal V_c(\xi)$ with $\tilde V_\xi\subset V_\xi$. 
As $\cup_{\xi\in \mathbb S^{d-1}}\tilde V_\xi$ is a covering of $\mathbb S^{d-1}$ we can extract a finite covering 
\begin{equation}\label{coverS}\exists L\in\mathbb N^*,\quad \mathbb S^{d-1}\subset \cup_{l=1}^L\tilde V_{\xi_l}.\end{equation}
Then we define $B:= \sum_{l=1}^L Op(\chi_{\xi_l}\delta_{\xi_l}) B_{\xi_l}\in M_{1\times m}(\Psi^0_{\text{cl}}(\mathbb R^d))$ with principal symbol
$$b_{0}(x,\xi):=\sum_{l=1}^L \chi_{\xi_l}\delta_{\xi_l}b_{0,\xi_l}(x,\xi),$$
satisfying
$$Bu=\sum_{l\in L}\sum_{i=1}^{N_{\xi_l}}Op(\chi_{\xi_l}\delta_{\xi_l})Q_{\xi_l,i} f_{\xi_l,i}.$$ 
In particular the right-hand-side is a finite sum of terms $Q f$ with $Q\in \Psi^0_{\text{cl}}(\mathbb{R}^d)$ and $f\in L^1(\mathbb{R}^d,\mathbb R)$.  We consider now
$$b_{0}(x,\xi)\omega=\sum_{l=1}^L\chi_{\xi_l}\delta_{\xi_l}b_{0,\xi_l}(x,\xi)\omega.$$
On one hand from \eqref{ellconstr} we deduce that all terms in the sum are non negative  for all $\xi\in\mathbb S^{d-1}$. On the other hand from \eqref{coverS} there exists at least a set $\tilde V_{\xi_j}$ such that $\xi\in \tilde V_{\xi_j}$, thus using again \eqref{ellconstr} we get \eqref{elldef}.\\
\end{proof}

\subsection{Preliminaries} We start with a lemma gathering several general measure properties.

\begin{lemma}\label{eqcov}  There exists a set $\mathcal N_0$ with $|\mu|_s (\mathcal N_0) =0$ such that for all $x \in \mathcal N_0^c$, there exists a sequence $r_j\rightarrow 0$ such that the following three assertions hold.
\begin{equation*} \tag{i} \label{i)}\underset{r\rightarrow 0}{\lim}\frac{\mathcal L^d (B(x,r))}{|\mu|_s (B(x,r))}=0,\; \underset{r\rightarrow 0}{\lim}\frac{|\mu|_a (B(x,r))}{|\mu|_s (B(x,r))}=0,\; \underset{r\rightarrow 0}{\lim}\avint_{B(x,r)}\left|\frac{d\mu}{d|\mu|}(y)-\frac{d\mu}{d|\mu|}(x)\right|d|\mu|_s=0,\end{equation*}
There exists $C,c>0$ such that the family of measures \footnote{by $ T^{x, r_j}$ we denote the dilations $\tilde x\in \mathbb{R}^d \mapsto x + r_j \tilde x \in \mathbb{R}^d,$ and by  $ T^{x, r_j}_\sharp\sigma $ we denote the push-forward of the measure $\sigma$ given by $T^{x, r_j}_\sharp\sigma(A)=\sigma(x+r_j A)$.}
$$\mu_j :=\frac{(T^{x,r_{j}})_\sharp \mu}{|\mu _s| (B(x,r_{j}))}$$ 
satisfies the temperance uniform in $j$ bounds:
\begin{equation*} \tag{ii} \label{ii)}    |\mu_j|( B(x, R)) = \frac{ |\mu| (B(x, R r_j)) } { |\mu|_s (B(x, r_j))}\leq C \max\{1,R^{d+\frac 12}\},
\end{equation*}
 We can pass to the limit
\begin{equation*} \tag{iii} \label{iii)}\mu_j\underset{j\rightarrow\infty}{\overset{*}{\rightharpoonup}}\tilde\nu\in Tan(x,\mu),\quad \tilde\nu=\frac{d\mu}{d|\mu|}(x)\nu, \quad \nu\in\,Tan(x,|\mu|),\end{equation*}
and the restriction of $\nu$ to $B_\frac 12$ is not trivial. 
\end{lemma}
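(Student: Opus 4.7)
My plan is to build $\mathcal N_0$ as the union of three $|\mu|_s$-null exceptional sets, one per assertion, treating them in order since (iii) rests on (ii), which in turn uses (i). Assertion (i) is classical: the first limit follows from the Besicovitch-Radon-Nikodym derivative of $\mathcal L^d$ with respect to $|\mu|_s$, which vanishes $|\mu|_s$-a.e.\ because $\mathcal L^d \perp |\mu|_s$; the second is identical with $|\mu|_a$ in place of $\mathcal L^d$, using $|\mu|_a \ll \mathcal L^d \perp |\mu|_s$; the third is the Lebesgue-Besicovitch differentiation theorem applied to the $L^1(d|\mu|_s)$ vector-valued function $d\mu/d|\mu|$. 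Intersecting these three full-measure sets gives an exceptional set of $|\mu|_s$-measure zero outside which (i) holds for \emph{every} $r \to 0$, not merely along a subsequence.

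For (ii), uniform polynomial temperance cannot hold at every radius, so a subsequence must be selected. The plan is to invoke the Preiss-type bad-scale pigeonhole whose variants are gathered in the final appendix of the paper: if for some $|\mu|_s$-positive set of $x$ the estimate $|\mu|(B(x, Rr))/|\mu|_s(B(x,r)) \leq C R^{d+1/2}$ failed for every sufficiently small $r$ and some large $R>1$, iterating along dyadic scales $r_k = R^{-k}$ would force the ratio $|\mu|(B(x,1))/|\mu|_s(B(x, R^{-k}))$ to grow super-polynomially in $R^k$; using the first limit of (i) to control the denominator from below against the ambient temperance $\mu \in \mathcal M_t$, this is a contradiction. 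A pigeonhole/Borel-Cantelli argument then yields, for $|\mu|_s$-a.e.\ $x$, a sequence $r_j \to 0$ along which (ii) holds.

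With (ii) secured, $\{\mu_j\}$ is locally uniformly bounded in total variation, so by Banach-Alaoglu one extracts a subsequential weak-$*$ limit $\tilde\nu$. To identify its structure, I would decompose $d\mu = (d\mu/d|\mu|)\, d|\mu|_s + g\, d\mathcal L^d$ and rescale both pieces: the second limit of (i) combined with the temperance (ii) shows the rescaled absolutely continuous part vanishes, while the third limit of (i) says the polar function is close in $L^1(d|\mu|_s)$-average on $B(x, r_j)$ to the constant vector $d\mu/d|\mu|(x)$. Passing to the limit yields $\tilde\nu = \frac{d\mu}{d|\mu|}(x)\,\nu$ with $\nu$ the weak-$*$ limit of the rescaled $|\mu|_s$; the second limit of (i) again yields $\mathrm{Tan}(x, |\mu|) = \mathrm{Tan}(x, |\mu|_s)$, so $\nu \in \mathrm{Tan}(x,|\mu|)$. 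Non-triviality of $\nu$ on $B_{1/2}$ requires a final refinement ensuring $\liminf_j |\mu|_s(B(x, r_j/2))/|\mu|_s(B(x, r_j)) > 0$, a standard doubling-scale selection available $|\mu|_s$-a.e. The genuinely delicate step is (ii): producing one tangent measure is elementary, but obtaining uniform polynomial control on \emph{all} balls simultaneously---indispensable later for the pseudo-local action of the pseudodifferential operators appearing in the definition of $WF_{L^1}$---is precisely what the Preiss construction provides.
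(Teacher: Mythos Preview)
Your proposal is correct in outline and matches the paper's approach: (i) is classical Besicovitch differentiation, (ii) is the Preiss temperance construction from the appendix (Proposition~\ref{Preiss}), and (iii) is standard tangent-measure theory with non-triviality coming from the lower bound~\eqref{dyadicboundbis}. The paper's own proof is nothing more than these three references.

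One caution on your heuristic for (ii): the iteration sketch you give does not work. If the bound $|\mu|(B(x,Rr)) \leq C R^{d+1/2}|\mu|_s(B(x,r))$ fails at scales $r_k=R^{-k}$, you get $|\mu|_s(B(x,R^{-k})) < C^{-1}R^{-(d+1/2)}|\mu|(B(x,R^{-k+1}))$, and since $|\mu|_s \leq |\mu|$ the inequality goes the wrong way to telescope; you never reach a super-polynomial blow-up, and the ambient temperance $\mu\in\mathcal M_t$ is irrelevant (the measure has already been localized to compact support). The actual Preiss argument in Proposition~\ref{Preiss} is quite different: it uses the Fubini identity $\nu(A)=\frac{1}{\mathcal L^d(B(0,R))}\int \nu(A\cap B(x,R))\,dx$ to show directly that the bad-scale set $E_{r,k,\delta}=\{x:\nu(B(x,2^kr))\geq \beta_{k,\delta}\,\nu(B(x,r))\}$ has $\nu$-measure at most $\delta$, then runs Borel--Cantelli over $k$ and over the given sequence $r_j$. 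Since you correctly defer to the appendix, this does not break your argument, but the mechanism you describe is not the one that delivers the bound.
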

Assertion (i) is based on classical characterisation of singular points of a measure (see e.g.~\cite[Theorem 2.12 (3)]{M}, ~\cite[Corollary 1.7.1]{EvGa}). Assertion (ii) is shown in Proposition~\ref{Preiss}. Assertion (iii) is given by the classical properties of tangent measures (see e.g. \cite[Section 2.3]{Ri}) and the lower bound~\eqref{dyadicboundbis}.

Next we are going to prove the following property on the set $E\setminus \mathcal N_0$.

\begin{prop} \label{eqcovbis}There exists a set $\tilde E\subseteq E\setminus \mathcal N_0$ with $|\mu|_s(E \setminus \tilde E)=0$ such that 
for all $x\in\tilde E$, there exists $B\in M_{1\times m}(\Psi^0_{\text{cl}}(\mathbb{R}^d)),N \in\mathbb N^*,   Q_i\in \Psi^0_{\text{cl}}(\mathbb{R}^d), f_i \in L^1(\mathbb{R}^d,\mathbb R), \forall 1\leq i\leq N$, satisfying the identity
$$ Bu=\sum_{i=1}^{N}Q_if_i,$$
and the elliptic condition at $\bigl(x,\frac{d\mu}{d|\mu|}(x)\bigr)$:
$$
b_{0}(x,\xi)\frac{d\mu}{d|\mu|}(x)\neq 0, \quad \forall\xi\in\mathbb S^{d-1}.
$$
Moreover, for the sequence $(r_j)$ constructed in Lemma~\ref{eqcov} we have the following properties:
\begin{equation}
\begin{gathered} \tag{iv} \label{iv)}\forall i\in\{1,..,N_x\},\quad  \underset{r\rightarrow 0}{\lim}\frac{|f_i\mathcal L^d  (B(x,r))|}{|\mu|_s  (B(x,r))}=0, \\
\exists C_{x,i}>0,\: \frac{|f_i\mathcal L^d  (B(x,Rr_j))|}{|\mu|_s  (B(x,r_j))} \leq C_{x,i} \max\{1,R^{d+\frac 12}\}.
\end{gathered}
\end{equation}
\end{prop}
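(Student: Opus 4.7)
The plan is to combine a pointwise application of Proposition~\ref{b} with a countable reduction and the temperance/singularity machinery of Lemma~\ref{eqcov}. For each $x \in E \setminus \mathcal{N}_0$, Proposition~\ref{b} furnishes a tuple $(B^x, Q_i^x, f_i^x)$ with
\begin{equation*}
B^x \mu = \sum_{i} Q_i^x f_i^x, \qquad b_0^x(x, \xi) \frac{d\mu}{d|\mu|}(x) \neq 0 \text{ for all } \xi \in \mathbb{S}^{d-1}.
\end{equation*}
One then defines $\tilde E$ as the subset of $E \setminus \mathcal{N}_0$ on which some such tuple can also be chosen to satisfy property (iv) along a sequence $r_j \to 0$, and the goal is to show $|\mu|_s( (E \setminus \mathcal{N}_0) \setminus \tilde E) = 0$.

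The central difficulty is that $(B^x, Q_i^x, f_i^x)$ depends on $x$, so one cannot directly invoke $\mathcal{L}^d$-$|\mu|_s$ mutual singularity statements for individual $L^1$ functions. I would resolve this by a Lindel\"of covering argument: at each $x_0 \in E \setminus \mathcal{N}_0$, the approximate continuity of $\frac{d\mu}{d|\mu|}$ in the $|\mu|_s$-average sense (third limit in (i) of Lemma~\ref{eqcov}), combined with the joint continuity of $(y,\xi) \mapsto b_0^{x_0}(y, \xi) \omega$ and the compactness of $\mathbb{S}^{d-1}$, shows that ellipticity of $B^{x_0}$ at $(x_0, \frac{d\mu}{d|\mu|}(x_0))$ propagates to $(y, \frac{d\mu}{d|\mu|}(y))$ for $|\mu|_s$-a.e. $y$ in a sufficiently small ball $B(x_0, \rho_{x_0})$. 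Applying Lindel\"of to the open cover of $E \setminus \mathcal{N}_0$ by these balls yields a countable family $\{(B^{(k)}, (Q_i^{(k)}, f_i^{(k)})_i)\}_{k \in \mathbb{N}}$ such that for $|\mu|_s$-a.e. $x \in E \setminus \mathcal{N}_0$, at least one $B^{(k)}$ is elliptic at $(x, \frac{d\mu}{d|\mu|}(x))$.

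For this now countable family, the first limit in (iv) is a classical fact: since $|f_i^{(k)}|\mathcal{L}^d \ll \mathcal{L}^d \perp |\mu|_s$, the ratio $\frac{|f_i^{(k)} \mathcal{L}^d|(B(x,r))}{|\mu|_s(B(x,r))}$ vanishes as $r \to 0$ for $|\mu|_s$-a.e. $x$ by standard Besicovitch differentiation, and the exceptional set is a countable union of $|\mu|_s$-null sets, hence $|\mu|_s$-null. For the uniform temperance estimate (second part of (iv)), I would invoke the Preiss-type construction of the last section not just for $|\mu|$ but for the combined measure $|\mu| + \sum_{k,i} c_{k,i} |f_i^{(k)}|\mathcal{L}^d$ with summable weights $c_{k,i}$; this produces, for $|\mu|_s$-a.e. $x$, a single sequence $r_j \to 0$ along which all the dyadic bounds of (ii) of Lemma~\ref{eqcov} and of (iv) hold simultaneously.

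The hard part is the countable reduction: the passage from the pointwise existence supplied by Proposition~\ref{b} to a workable countable family of operators relies crucially on the approximate continuity of the polar function at $|\mu|_s$-generic points, which acts as a measure-theoretic Lebesgue point property allowing pointwise ellipticity to propagate to a full-measure subset of a neighborhood. A secondary technical point is coordinating a single sequence $r_j$ handling multiple measures at once, but the first limit in (i) of Lemma~\ref{eqcov} tells us that $\mathcal{L}^d$ is negligible compared to $|\mu|_s$ at $|\mu|_s$-a.e. point, so the absolutely continuous measures $|f_i^{(k)}|\mathcal{L}^d$ only contribute lower-order corrections to the Preiss machinery used to prove (ii).
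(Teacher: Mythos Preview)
Your overall strategy---produce a countable family of tuples $(B^{(k)}, Q_i^{(k)}, f_i^{(k)})$ so that one of them is elliptic at $\bigl(x, \frac{d\mu}{d|\mu|}(x)\bigr)$ for $|\mu|_s$-a.e.\ $x\in E\setminus\mathcal N_0$---matches the paper's. But your Lindel\"of step rests on a claim that is too strong: approximate continuity of the polar function at $x_0$ in the sense of (i) does \emph{not} say that $\frac{d\mu}{d|\mu|}(y)$ is close to $\frac{d\mu}{d|\mu|}(x_0)$ for $|\mu|_s$-a.e.\ $y$ in some fixed small ball; it only says that the set of $y$ where this fails has $|\mu|_s$-density tending to zero as the radius shrinks. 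In any fixed ball there may remain a set of positive $|\mu|_s$-measure where the polar is far away, so Lindel\"of over these balls does not yield an a.e.\ statement. The paper handles this by introducing the sets $F_{x,r}=\{y\in B(x,r):\|\frac{d\mu}{d|\mu|}(y)-\frac{d\mu}{d|\mu|}(x)\|\le\delta_x\}$, which carry at least half the mass of the ball, covering a compact $K\subset E\setminus\mathcal N_0$ by Besicovitch, defining $\tilde E$ as the union of the $F_{x_n,r_{x_n}}$ intersected with density points of $K$ (minus the countably many differentiation null sets for the $f_{x_n,i}$), and then \emph{exhausting}: one first obtains $|\mu|_s(\tilde E)>0$ and then reruns the construction on any positive-measure subset of $E\setminus\mathcal N_0$ to conclude that the good set has full measure. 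Your route could be repaired by first invoking Lusin's theorem to pass to a compact on which the polar is genuinely continuous (so ellipticity does propagate on open relative balls), and then exhausting.

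For the second line of (iv) your proposal to rerun the Preiss construction on $|\mu|+\sum_{k,i}c_{k,i}|f_i^{(k)}|\mathcal L^d$ would work but is heavier than needed and, strictly speaking, produces a \emph{new} sequence rather than the one already fixed in Lemma~\ref{eqcov}, so it does not prove the proposition as stated. The paper derives the bound directly: for $Rr_j$ below a threshold one writes
\[
\frac{|f_i\mathcal L^d|(B(x,Rr_j))}{|\mu|_s(B(x,r_j))}
=\frac{|f_i\mathcal L^d|(B(x,Rr_j))}{|\mu|_s(B(x,Rr_j))}\cdot\frac{|\mu|_s(B(x,Rr_j))}{|\mu|_s(B(x,r_j))}
\le 1\cdot C\,R^{d+\frac12}
\]
using the first line of (iv) and (ii), while for $Rr_j$ bounded below one uses $|f_i\mathcal L^d|(B(x,Rr_j))\le\|f_i\|_{L^1}$ together with the first limit of (i), which gives $|\mu|_s(B(x,r_j))^{-1}\le C r_j^{-d}$, and the constraint $r_j^{-1}\le C R$ to obtain a bound of order $R^d$. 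No modification of the Preiss machinery is required.
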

\begin{proof}
As $|\mu|_s$ is a finite Radon measure we have $|\mu|_s(E\setminus \mathcal N_0)= \sup \{ \mu_s (K); K \text{ compact} \subset E\setminus \mathcal N_0\}$. Thus there exists a compact set $K\subseteq E \mathcal N_0$ such that $|\mu|_s(K)>\frac 1 2|\mu|_s(E) >0$.

For any $x\in K$ we have $x\in E$, so $\bigl(x,\frac{d\mu}{d|\mu|}(x)\bigr)\in(\Pi_{13}WF_{L^1}(\mu))^{\mathsf{c}}$. Thus we can apply Proposition \ref{b} to get the existence of 
$B_{x}\in M_{1\times m}(\Psi^0_{\text{cl}}(\mathbb{R}^d)),$ 
 $N\in\mathbb N^*, Q_{x,i}\in M_{m\times 1}(\Psi^0_{\text{cl}}(\mathbb{R}^d)), f_{x,i}\in L^1(\mathbb{R}^d,\mathbb R), \forall 1\leq i\leq N$, such that
$$B_{x}\mu =\sum_{i=1}^{N_x}Q_{x,i}f_{x,i},$$
with the elliptic property at $\bigl(x,\frac{d\mu}{d|\mu|}(x)\bigr)$:
$$b_{0,x}(x,\xi)\frac{d\mu}{d|\mu|}(x)\neq 0, \quad \forall\xi\in\mathbb S^{d-1}.$$
This implies the existence of $r_{x,1}>0$ and $c_x>0$ such that 
$$|b_{0,x}(y,\xi)\frac{d\mu}{d|\mu|}(x)|>c_x>0,\quad \forall y\in B(x, r_{x,1}),\quad \forall\xi\in\mathbb S^{d-1},$$
In particular there exists $\delta_x>0$ such that
\begin{equation}\label{delta}
b_{0,x}(y,\xi)\omega\neq 0,\quad \forall y\in B(x, r_{x,1}),\quad \forall\xi\in\mathbb S^{d-1},\quad\forall \omega\in\mathbb S^{m-1},\,\left\|\omega-\frac{d\mu}{d|\mu|}(x)\right\|\leq \delta_{x}.\end{equation}
From Lemma~\ref{eqcov} for all $x \in \mathcal N_0^c$, thus for all $x\in K$,
\begin{equation}\label{polmeans} 
\underset{r\rightarrow 0}{\lim}\avint_{B(x,r)}\left|\frac{d\mu}{d|\mu|}(y)-\frac{d\mu}{d|\mu|}(x)\right|d|\mu|_s=0,
\end{equation}
with the average integral well-defined, meaning $|\mu|_s(B(x,r))>0,\, \forall r,\,\forall x\in K$. Thus for all $x\in K$ we get the existence of $r_{x,2}$ such that for all $r\leq r_{x,2}$
\begin{equation}\label{r2x}\avint_{B(x,r)}\left|\frac{d\mu}{d|\mu|}(y)-\frac{d\mu}{d|\mu|}(x)\right|d|\mu|_s\leq \frac{\delta_x} 2.\end{equation}
For all $x\in K$ and $r\leq r_{x,2}$ we have $|\mu|_s(B(x,r))>0$ and the set defined by 
$$F_{x,r}:=\{y\in B(x,r),\, \left\|\frac{d\mu}{d|\mu|}(y)-\frac{d\mu}{d|\mu|}(x)\right\|\leq \delta_x\},$$
satisfies, in view of \eqref{r2x}, 
\begin{equation}\label{posmesF}F_{x,r}\subseteq B(x,r),\quad |\mu|_s(F_{x,r})>\frac{ |\mu|_s(B(x,r))} 2.\end{equation}

We denote by $\mathcal N_1$ the set of null $|\mu|_s-$measure of the zero-density points w.r.t. $|\mu|_s$ of the set $K$, thus
\begin{equation}\label{posmesK}\forall x\in K\setminus \mathcal N_1,\quad \exists r_{x,3}, \quad \forall r\leq r_{x,3}\quad\frac 34 |\mu|_s( B(x,r))\leq |\mu|_s(K\cap B(x,r)).\end{equation}

The closed balls $\overline{B}(x,\frac{r_x}2), x\in K \setminus \mathcal N_1$ with $r_x=\min\{r_{x,1},r_{x,2},r_{x,3}\}$ form a covering of the bounded set $K\setminus \mathcal N_1$, so by Besicovitch's covering theorem (\cite[Theorem 2.7]{M}) we can extract a countable covering: there exists $\{x_n\}_{n\in\mathbb N}\subseteq K\setminus \mathcal N_1$ such that 
$$K\setminus \mathcal N_1\subseteq\cup_{n\in\mathbb N}\overline{B}(x_n,\frac{r_{x_n}}2)\subseteq\cup_{n\in\mathbb N}B(x_n,r_{x_n}).$$

For all $n\in\mathbb N$ and $1\leq i\leq N_{x_n}$ there exists a set $\mathcal N_{n,i}$ of null $|\mu|_s$-measure such that 
$$\underset{r\rightarrow 0}{\lim}\frac{|f_{x_n,i}\mathcal L^d  (B(x,r))|}{|\mu|_s  (B(x,r))}=0,\:\forall x\in (\mathcal N_{n,i})^c.$$
This is due to the fact that $|\mu|_s\perp f_{x_n,i}\mathcal L^d$ and Theorem 2.12 (3) in \cite{M}.

It follows from \eqref{delta} that
\begin{equation}\label{ellshift}
\begin{gathered}
\forall x\in K\setminus \mathcal N_1,\quad\exists n\in\mathbb N, \quad x\in B(x_n,r_{x_n}),\quad \mbox{ and if } \left\|\frac{d\mu}{d|\mu|}(x)-\frac{d\mu}{d|\mu|}(x_n)\right\|\leq \delta_{x_n},\\
\mbox { then }\quad b_{0,x_n}(x,\xi)\frac{d\mu}{d|\mu|}(x)\neq 0\quad  \forall\xi\in\mathbb S^{d-1}.
\end{gathered}
\end{equation}

Now we define the sets
$$\tilde K:=K\setminus (\mathcal N_1\cup_{n\in\mathbb N}\cup_{i=1}^{N_{x_n}}\mathcal N_{x_n,i}),$$
and
$$\tilde E:=(\cup_{n\in\mathbb N} F_{x_n,r_{x_n}})\cap \tilde K\subseteq E\setminus \mathcal N_0.$$ 
In view of \eqref{posmesF}-\eqref{posmesK} we have
\begin{multline}
|\mu|_s(F_{x_n,r_{x_n}}\cap \tilde K) = |\mu|_s(F_{x_n,r_{x_n}}\cap  K) \\
\geq |\mu|_s(F_{x_n,r_{x_n}}) - |\mu|_s(B(x_n, r_{x_n}) \cap K^c) \geq (\frac 1 2 -  \frac 14) |\mu|_s(B(x_n,r_{x_n})),
\end{multline}
and since $|\mu|_s( K) >0$ we get at least one $n\in\mathbb N$ such that $|\mu|_s(B(x_n,r_{x_n}))>0$ so
$$|\mu|_s(\tilde E)>0.$$ 
Then \eqref{ellshift} and the definitions of $F_{x,r}$ and $\tilde E$ ensure us that
$$\forall x\in\tilde E,\quad\exists n\in\mathbb N,\quad b_{0,x_n}(x,\xi)\frac{d\mu}{d|\mu|}(x)\neq 0,\quad \quad \forall\xi\in\mathbb S^{d-1}.$$
Therefore for $x\in\tilde E$ we define  $B:=B_{x_n}$, which have the elliptic property in the statement of the Lemma, we define $N:=N_{x_n}$, and we have the first part of the Lemma:
$$B u=\sum_{i=1}^{N}Q_{x_n,i} f_{x_n,i}.$$

Moreover, $f_{x_n,i}$ satisfies the first assertion of (iv) as by definition $\tilde E\subset \mathcal N_{x_n,i}^c$. Then it follows that
 $$\exists R_{x,x_n,i},\; \forall r\in (0, R_{x,x_n,i}),\: \frac{|f_{x_n,i}\mathcal L^d  (B(x,r))|}{|\mu|_s  (B(x,r))}\leq 1.$$
For $R r_j \leq R_{x,x_n,i}$ we deduce by combining also with Lemma \ref{eqcov} (ii) that
\begin{multline}
\frac{|f_{x_n,i}\mathcal L^d (B(x,R r_j))|}{|\mu|_s(B(x,r_j))}= \frac{|f_{x_n,i}\mathcal L^d (B(x,R r_j))|}{|\mu|_s(B(x,Rr_j))}\frac{|\mu|_s (B(x,R r_j))}{|\mu|_s(B(x,r_j))}\\
\leq  \frac{|\mu|_s (B(x,R r_j))}{|\mu|_s(B(x,r_j))}\leq C \max\{1,R^{d+\frac 12}\}.
\end{multline}
On the other hand, by using Lemma~\ref{eqcov} (i) we have for $R r_j \geq R_{x,x_n,i}$
$$
\frac{|f_{x_n,i}\mathcal L^d (B(x,R r_j))|}{|\mu|_s(B(x,r_j))}\leq  C\frac{\| f_{x_n,i}\|_{L^1}} {r_j^{-d}} \leq \frac{C'}{R_{x,x_n,i}^d} R^{d}.
$$
Therefore $f_{x_n,i}$ satisfies also the last assertion of (iv).

Summarizing, we have shown that we can find  $\tilde E\subset E\setminus \mathcal N_0$ of  positive $|\mu|_s-$measure, on which all the conclusion of the Lemma are satisfied, excepted the fact that $|\mu_s|(E\setminus \tilde{E}) =0$.
The argument above is valid also starting from any subset of $E\setminus \mathcal N_0$ of  positive $|\mu|_s-$measure. Thus the conclusion of the Lemma are satisfied $|\mu|_s-$a.e. on $E\setminus \mathcal N_0$. Indeed, let us we denote by $E_1$ the subset of $E\setminus \mathcal N_0$ of the points for which the conclusion of the Lemma are not satisfied. If $|\mu_s| (E_1)>0$ then the construction above would give the existence of a subset $\tilde{E}_1\subset E_1$ of strictly positive $|\mu_s|-$measure, thus non-empty, on which the conclusions of the Lemma are both true and false on $\widetilde{E}_1$. Therefore $|\mu_s|(E_1)=0$ and the full conclusion of the Lemma follows.
\end{proof}
\subsection{The contradiction argument}
We go back to the contradiction argument. From now on, our proof is inspired from the approach  in ~\cite{DePRi}, using the elliptic type operator family from Proposition \ref{eqcovbis} instead of an operator constraining the measure to vanish.

We thus apply Proposition \ref{eqcovbis} and as $|\mu|_s(\tilde E)>0$ we can choose now  $x_0\in\tilde E$ so that properties in Lemma \ref{eqcov} and Proposition-\ref{eqcovbis} are valid with $x=x_0$. We denote 
$$\omega_0:=\frac{d\mu}{d|\mu|}(x_0).$$ 
From Lemma \ref{eqcov} \eqref{iii)} we have the existence of a sequence $r_j$ converging to zero such that  
$$\mu_j\underset{j\rightarrow\infty}{\overset{*}{\rightharpoonup}}\frac{d\mu}{d|\mu|}(x)\,\nu,$$
with $\nu\in Tan(x,|\mu|)$ a positive measure with $\nu(B_\frac 12)>0$. Then we also have 
(cf Theorem 2.12 (3) in \cite{M} applied with $\mu_s\perp g_R\mathcal L^d$, where $g_R(x):=R \,g(Rx)$ and $R>0$)
$$\nu_j:=\frac{(T^{x_0,r_j})_\sharp|\mu|_s}{|\mu|_s (B(x_0,r_j))} \underset{j\rightarrow\infty}{\overset{*}{\rightharpoonup}}\nu,$$
and as $\nu_j\perp\mathcal L^d$, there exists sets $E_j\subseteq B_\frac 12$ where $\nu_j\llcorner B_\frac 12$ concentrates and $\mathcal L^d$ vanishes:
$$\nu_j(E_j)=\nu_j(B_\frac 12),\quad \mathcal L^d(E_j)=0.$$

By using the full information at $(x_0,\omega_0)$ given by Lemma \ref{eqcov} we shall get Proposition~\ref{dissoc} below. Then the two assertions of Proposition~\ref{dissoc} give us the following contradiction:
$$0<\nu(B_\frac 12)=\underset{j\rightarrow\infty}{\lim}\nu_j(E_j)\leq \underset{j\rightarrow\infty}{\lim}|\nu_j-\nu|(E_j)+\nu(E_j)\leq \underset{j\rightarrow\infty}{\lim}|\nu_j-\nu|(B_\frac 12)=0.$$
Therefore to end the proof of Theorem \ref{thWFL1} it remains to show Proposition \ref{dissoc}. 

\begin{prop}\label{dissoc} At least on a subsequence we have 
$$\nu\llcorner B_\frac 12\Lt\mathcal L^d, \quad \underset{j\rightarrow\infty}{\lim}|\nu_j-\nu|(B_\frac 12)=0.$$
\end{prop}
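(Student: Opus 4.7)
The plan is to derive a limiting frozen-coefficient elliptic equation for the tangent measure $\tilde\nu$ and then exploit Fourier-analytic rigidity. Starting from the identity $Bu = \sum_{i=1}^N Q_i f_i$ provided by Proposition~\ref{eqcovbis}, I would pair with a rescaled test function $\phi_j(x) := \phi((x-x_0)/r_j)$, $\phi\in C^\infty_c(\mathbb R^d)$, divide by $|\mu|_s(B(x_0,r_j))$, and pass to the limit $j\to\infty$. For the left-hand side the pseudodifferential calculus of Section~\ref{sec.pseudo} yields, after the dilation $\eta=r_j\xi$ in Fourier, $b(x,\eta/r_j) = b_0(x,\eta) + O(r_j)$ (by homogeneity of degree $0$ of $b_0$), and freezing $x$ at $x_0$ costs another $O(r_j)$ by Taylor expansion; combined with the weak-$\ast$ convergence $\mu_j\overset{*}{\rightharpoonup}\tilde\nu$ and the uniform temperance bound (ii) of Lemma~\ref{eqcov}, the left-hand side converges to $\langle \tilde\nu, B_\infty^*\phi\rangle$, where $B_\infty := b_0(x_0,D)$. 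The non-compact support of $B_\infty^*\phi$ is handled by its Calder\'on--Zygmund-type kernel decay paired against the temperance of $\tilde\nu$.

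For the right-hand side, each term $\langle Q_if_i,\phi_j\rangle/|\mu|_s(B(x_0,r_j))$ becomes the pairing of the rescaled density $r_j^d f_i(x_0+r_j\,\cdot)/|\mu|_s(B(x_0,r_j))$ against $(Q_i^*\phi_j)(x_0+r_j\,\cdot)$. The first part of property~(iv) in Proposition~\ref{eqcovbis} makes the $L^1(B_R)$ norm of these rescaled densities vanish on every fixed ball, while the second part provides the temperance needed to control the far region against the polynomial decay of the kernel of $Q_i^*$. Passing to the limit therefore yields $B_\infty\tilde\nu=0$ in $\mathcal D'(\mathbb R^d)$; writing $\tilde\nu=\omega_0\nu$ this is equivalent to the scalar equation
\begin{equation*}
b_0(x_0,D)\omega_0\,\nu = 0, \qquad b_0(x_0,\xi)\omega_0\neq 0\ \text{for every}\ \xi\in\mathbb S^{d-1},
\end{equation*}
an elliptic zeroth-order equation by the ellipticity condition of Proposition~\ref{eqcovbis}.

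The first assertion of the proposition then follows by Fourier analysis: $\nu$ is tempered by the temperance bound (ii), so $\hat\nu\in\mathcal S'$ satisfies $b_0(x_0,\xi)\omega_0\hat\nu=0$, and dividing by the non-vanishing symbol away from the origin gives $\mbox{supp}\,\hat\nu\subseteq\{0\}$. Hence $\nu$ is a polynomial in $x$, and the growth bound $|\nu|(B_R)\leq CR^{d+1/2}$ forces its degree to be $0$, yielding $\nu = c\,\mathcal L^d$ for some $c\geq 0$; in particular $\nu\llcorner B_{1/2}\Lt\mathcal L^d$.

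The main obstacle is the total-variation convergence $|\nu_j-\nu|(B_{1/2})\to 0$: since each $\nu_j$ is singular with respect to $\mathcal L^d$ whereas $\nu=c\,\mathcal L^d$ is absolutely continuous, weak-$\ast$ convergence alone is insufficient. My approach would be to upgrade the analysis above by establishing quantitative $L^1$-type (rather than merely distributional) vanishing of $B_\infty^{(\omega_0)}\nu_j := b_0(x_0,D)\omega_0\nu_j$ on bounded sets, using the first part of (iv) together with the full temperance bound; then to invert $B_\infty^{(\omega_0)}$ modulo a low-frequency smoothing operator and transfer the strong convergence to $\nu_j$ itself after localising with a cut-off $\chi\in C^\infty_c$ equal to $1$ on $B_{1/2}$, the commutators $[B_\infty^{(\omega_0)},\chi]$ being of order $-1$ and thus mapping the temperate $\nu_j$ to $L^1_{loc}$ by Corollary~\ref{coro}. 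Handling the endpoint $L^1$ behaviour of zeroth-order operators is the central technical difficulty, and is precisely the issue that the microlocal perspective developed in the paper is designed to address.
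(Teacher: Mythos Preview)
Your route to the first assertion via Fourier rigidity---passing to the frozen-coefficient equation $b_0(x_0,D)\omega_0\,\nu=0$, deducing $\operatorname{supp}\hat\nu\subset\{0\}$, and using temperance to force $\nu=c\,\mathcal L^d$---is a legitimate alternative and in fact yields a sharper conclusion than the paper states. The paper does not isolate this step: it proves both assertions at once by showing that a smoothed localised sequence $u_j:=\zeta_{\epsilon_j}(D_x)\chi\nu_j$ is precompact in $L^1_{\mathrm{loc}}$, from which both $\chi\nu\in L^1$ and the strong convergence follow simultaneously.

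The genuine gap is in your treatment of the second assertion. You propose to establish ``quantitative $L^1$-type vanishing of $B_\infty^{(\omega_0)}\nu_j$'' and then invert, but zeroth-order pseudodifferential operators are not bounded on $L^1$, so no such $L^1$ vanishing is available. The paper confronts exactly this obstacle: after rescaling the equation to $B^j\mu_j=Q^jf_j$ (with symbols uniformly in $S^0_{\mathrm{cl}}$), localising, smoothing, and approximately inverting the uniformly elliptic family $B^j\omega_0$, one obtains an identity for $u_j$ in which all terms but one are precompact in $L^1_{\mathrm{loc}}$ (they involve operators of order $-1$ acting on uniformly temperate measures, via Corollary~\ref{coro} and Proposition~\ref{compactbis}). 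The exceptional term $h_j:=\tilde\chi P^j\tilde\chi(B^jV_j+Q^jg_j)$ has $V_j,g_j\to 0$ in $L^1$, but since the composed zeroth-order operators only map $L^1\to L^{1,\infty}$, one gets merely $h_j\to 0$ \emph{in measure}. The decisive device you are missing is Lemma~2.2 of~\cite{DePRi}: it upgrades convergence in measure to $L^1$ convergence by exploiting the \emph{positivity} of $u_j$. Since $u_j\geq 0$, the negative part $h_j^-$ is dominated by $(u_j-h_j)^+$, which is a sum of the already-precompact terms and hence equi-integrable; combined with $h_j\to 0$ in measure and $h_j\rightharpoonup 0$, this forces $h_j\to 0$ in $L^1_{\mathrm{loc}}$. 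Without this positivity argument the endpoint $L^1$ issue cannot be closed, and your outline of ``inverting modulo a low-frequency smoothing operator'' does not circumvent it.
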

\begin{proof} Let $\chi$ be a smooth cut-off function equal to  $1$ on $B_\frac 12$ and $0$ outside $B_\frac 34$, satisfying $\int \chi (x) dx =1$. 
The conclusion of the Proposition follows if we show that $\chi\nu\in L^1(\mathbb R^d)$ and on a subsequence
\begin{equation}\label{conv}
 \chi \nu_j\overset{L^1(\mathbb R^d)}{\longrightarrow}\chi\nu.
\end{equation} 
As $\nu_j\underset{j\rightarrow\infty}{\overset{*}{\rightharpoonup}}\nu$, it is thus enough to prove that $\{\chi \nu_j\}$ is precompact in $L^1_{loc}(\mathbb R^d)$.

To get the $L^1_{loc}(\mathbb R^d)$-precompactness of $\{\chi \nu_j\}$  we shall consider the operator $B$ from Proposition \ref{eqcovbis}, and use its ellipticity property through an appropriate inversion argument. \medskip

\noindent
{\em{\bf{Step 1: The inversion formula.}}}\par\medskip
As $x_0\in\tilde E$, the properties in Proposition \ref{eqcovbis} are satisfied for $x_0$ and in particular we obtain the existence of $ B\in M_{1\times m}(\Psi^0_{\text{cl}}(\mathbb{R}^d)),Q\in \Psi^0_{\text{cl}}(\mathbb{R}^d), f \in L^1(\mathbb{R}^d,\mathbb R)$ satisfying the identity
$$ B\mu=Qf,$$
the elliptic condition at $(x_0,\omega_0)$:
\begin{equation}\label{ellbis}
b_{0}(x_0,\xi)\omega_0\neq 0, \quad \forall\xi\in\mathbb S^{d-1},
\end{equation}
and moreover $f$ satisfies the properties (iv) in Proposition \ref{eqcovbis}. For simplicity we have considered only one term in the right-hand side instead of a finite sum, as finitely many such terms can be treated similarly. According to Proposition~\ref{basic}, we can pass from left quantization to right quantization, i.e. we shall work with symbols that depend on $y$ and $\xi$ and are independent of the variable $x$, and the ellipticity property \eqref{ellbis} remains valid for them at $y=x_0$. For simplicity we call these symbols again $b_l$ and $q$.
We apply the zoom $\frac{(T^{x_0,r_j})_\sharp\cdot}{|\mu|_s (B_{r_j}(x_0))}$ and get
$$\tilde{B}^j\mu_j=\tilde{Q}^jf_j,$$
where
\begin{equation}
\begin{gathered}
 f_j\mathcal L^d:=\frac{(T^{x_0,r_j})_\sharp f\mathcal L^d}{|\mu|_s  (B(x_0,r_j))},\quad f_j(x)= \frac{ (r_j)^d}{|\mu|_s  (B(x_0,r_j))} f( x_0 + r_j x ),\\
 \tilde b^j(y,\xi):=b(x_0+r_j y,\frac{\xi} {r_j} ),\quad \tilde q^j(y,\xi):=q(x_0+r_j y,\frac{ \xi} { r_j}).
 \end{gathered}
 \end{equation}
 We first microlocalize in frequency to suppress the low frequencies by applying to the left $ (1- \chi(D_x))$, and get 
 \begin{equation}\label{bj}
B^j  \mu_j :=   (1- \chi(D_x))\tilde{B}^j\mu_j= (1- \chi(D_x))\tilde{Q}^jf_j=:Q^j ( f_j) 
 \end{equation}
  with symbols (remark that the following exact formula follows from the independence of $\tilde{b}, \tilde{q} $ with respect to the $x$ variable and~\eqref{quantdte}),
 $$ b^j (y, \xi) = (1- \chi ( \xi)) b(x_0 + r_j y ,\frac{\xi} {r_j} ),\quad q^j(y,\xi):=(1- \chi (\xi))q(x_0+r_j y,\frac{ \xi} {r_j}).$$
 Notice that the symbols $b^j$ and $q^j$ are in $S^0_{\text{cl}} ( \mathbb{R}^d)$ with semi-norms in $S^0_{\text{cl}} ( \mathbb{R}^d)$ uniformly bounded in $j$. Indeed, from~\eqref{bj} we get 
 \begin{multline}\label{unif}
 | \partial_{y}^\alpha \partial_\xi^\beta b^j (y, \xi) |=  \Bigl | \sum_{\delta + \gamma =\beta } \frac{ \beta !}{\delta ! \gamma !} r_j^{|\alpha| -|\gamma|}\partial _{\xi} ^\delta (1- \chi(\xi))  \partial_{y}^{\alpha} \partial_\xi ^\gamma b(x_0+r_j y, \frac \xi {r_j})\Bigr| \\
  \leq Cr_j^{-|\beta|} (1+ \frac{|\xi|} {r_j} )^{- |\beta|}  1_{\frac 1 2 \leq |\xi|}  +C \sum_{\genfrac{}{}{0pt}{1}{\delta \neq 0} {\delta + \gamma = \beta} } r_j^{-|\gamma |} (1+ \frac{|\xi|} {r_j} )^{- |\gamma |}  1_{\frac 1 2 \leq |\xi| \leq 2}  \leq C (1+ |\xi|)^{- |\beta|} 
  \end{multline}
Now we localize in space 
   $$B^j  \chi \mu_j=Q^j \chi f_j +[\chi, Q^j] f_j-[\chi, B^j] \mu_j.$$
Next we shall apply a smoothing operator $\zeta_\epsilon(D_x)$ where  $\zeta_\epsilon (\xi) =  \widehat{\chi} (\epsilon \xi) $. Since
$$ \zeta_\epsilon (D_x) \chi \nu_j=\widehat{\chi} (\epsilon D_x) (\chi \nu_j) = \chi_{\epsilon} \star (\chi \nu_j ) \overset{\epsilon \rightarrow 0}{\rightharpoonup} \chi \nu, \qquad  \chi_{\epsilon} (x) :=\frac {1} {\epsilon^d} \chi ( \frac{ x} {\epsilon}),$$
we can choose a sequence $\epsilon _j<\frac 18 $ converging to $0$ fast enough so that 
$$ u_j :=\zeta_{\epsilon_j}(D_x)\chi \nu_j=  \chi_{\epsilon_j } \star (\chi \nu_j ) \underset{j\rightarrow\infty}{\overset{*}{\rightharpoonup}} \chi \nu.
$$
Thus to get $\{\chi \nu_j\}$ precompact in $L^1_{loc}(\mathbb R^d)$ it is enough to get $\{u_j\}$ precompact in $L^1_{loc}(\mathbb R^d)$.
We have by applying $\zeta_{\epsilon_j}(D_x)$:
\begin{equation}\label{id}
B^j  \omega_0 u_j =B^j    V_j + {Q}^j  g_j \end{equation}
$$- [ \zeta_{\epsilon_j}(D_x)  , B^j ] \chi \mu_j +   [ \zeta_{\epsilon_j} (D_x) , Q^j ] \chi f_j+ \zeta_{\epsilon_j}(D_x)  ([\chi, Q^j] f_j-[\chi, B^j] \mu_j),$$
where 
$$V_j:= \zeta_{\epsilon_j} (D_x) (\omega_0 \chi \nu_j-\chi \mu_j), \qquad g_j = \zeta_{\epsilon_j} (D_x)  \chi f_j $$

Next we consider the family of pseudodifferential operators 
$$B^j_{\omega_0}:=B^j \omega_0,$$ 
of symbols
$$b^j_{\omega_0}(y,\xi):=b^j(y,\xi)\omega_0= (1- \chi (\xi)) b(x_0 + r_j y,\frac{\xi} {r_j})\omega_0,$$
which are bounded in $S_{cl}^0(\mathbb R^d)$ uniformly with respect to $j$. We split:
$$b^j_{\omega_0}(y,\xi)= (1- \chi (\xi)) b_{0} (x_0 + r_j y,{\xi} )\omega_0 +  (1- \chi (\xi)) (b-b_{0}) (x_0 + r_j y,\frac{\xi} {r_j})\omega_0.
$$
From \eqref{ellbis} we get that the principal symbols $b_{0,l}$ satisfy
$$|b_{0} (y, \xi) \omega_0| \geq c >0,$$
for all $\xi\in \mathbb{S}^{d-1}$ and $y$ close to $x_0$, and following the same lines as in~\eqref{unif}, the symbol 
$$(1- \chi (\xi)) (b-b_{0}) (x_0 + r_j y,\frac{\xi} {r_j})\omega_0$$
is uniformly in $S^{-1}$ and bounded by $Cr_j (1+ |\xi|)^{-1}$. 
So for $j$ large enough, $B^j_{\omega_0}$ is elliptic on  $B_1$ uniformly with respect to $j$, in the sense:
$$
\exists\, J, c>0, \quad | b^j_{\omega_0}(y,\xi)\omega_0 |\geq c, \quad  \forall j \geq J, \forall |\xi|\geq 1, \forall y\in B_1.
$$
and we can  approximately invert it locally as follows. Let us consider $\tilde \chi\in C^\infty_0 (B_1)$ equal to $1$ on $B_\frac 78$ thus in particular near the support of $\chi$. For $j\geq J$ we define 
$$ p^j(x, \xi) = \frac{ \chi(x) (1- \chi({\xi} )) } { b^j_{\omega_0} (x, \xi)},$$ 
which is in $S^{0}_{\text{cl}}$  uniformly with respect to $j$,
so that, by symbolic calculus of pseudodifferential operators, and by adding the additional cutt-off $\tilde \chi$ for later use, 
$$ P^j \tilde \chi B^j_{\omega_0} \tilde \chi= \chi (x) (1- \chi({D_x} )) + \widetilde{R}^j= \chi(x) + R^j, 
$$ where   
 $\tilde \chi R^j \tilde \chi = R^j$ and the family of operators $R^j$ 
is a family of pseudodifferential operators uniformly bounded in $\Psi_{cl}^{-1}$, i.e. 
\begin{equation}\label{norms}
\forall \alpha, \beta, \sup _j \sup_{x,\xi}(1+|\xi|)^{1+|\beta |}|\partial_x^\alpha\partial_\xi^\beta (r^j(x,\xi))| <+\infty.
\end{equation}

As $u_j = \chi_{\epsilon_j} \star ( \chi \nu_j)  $ is supported in an $\epsilon_j<\frac 18$ neighborhood of $B_\frac 34$ we have $u_j = \tilde \chi u_j$. Then we can apply $\tilde \chi P^j\tilde \chi$ to \eqref{id} to get:
 \begin{equation}\label{terms}
u_j=-  R^j ( u_j)+\tilde \chi P^j \tilde \chi(B^j  V_j+Q^jg_j)    \end{equation}
$$ - \tilde \chi P^j \tilde \chi [ \zeta_{\epsilon_j}(D_x)  , B^j ] \chi \mu_j +  \tilde \chi P^j \tilde \chi [\zeta_{\epsilon_j} (D_x) , Q^j ]  \chi f_j+ \tilde \chi P^j \tilde \chi\zeta_{\epsilon_j}(D_x) ( [\chi, Q^j] f_j- [\chi, B^j] \mu_j).$$\smallskip

\noindent
{\em{\bf{Step 2: The compactness arguments.}}}\par\medskip
To get  Proposition~\ref{dissoc}  we want to prove that $\{u_j\}$ is precompact in $L^1_{loc}(\mathbb R^d)$, and we have to study the contributions of each terms.\

We start with  the term $R^j ( u_j)=\tilde \chi R^j \tilde \chi u_j$ in \eqref{terms}. From~\eqref{norms} and Corollary~\ref{coro} applied with $\delta=-1$  the operators $ R^j$ are bounded  from  $\mathcal{M}_0$ to $W^{1- \eta, 1}( \mathbb{R}^d)$ for $0<\eta<1$, uniformly in $j$. Also, by the weak convergence of $u_j$ we get that $\tilde\chi u_j$ is uniformly bounded in $\mathcal{M}_0$. Then  Proposition~\ref{compactness} with $s=1-\eta$  implies  that $\{R^j(u_j)\}$ is pre-compact in $L^1_{loc}(\mathbb R^d)$.

The same argument allows to handle the contribution of $ \tilde \chi P^j \tilde \chi [\zeta_{\epsilon_j} (D_x) , Q^j ]  \chi f_j$, by using this time that $\chi f_j$ is uniformly bounded in $\mathcal{M}_0$ due to the first assertion in Proposition \ref{eqcovbis} (iv), and also the fact that  the symbol $\zeta ( \epsilon_j \xi)$ of  the operator $ \zeta_{\epsilon_j} (D_x)$ is in $ S^0_{\text{cl}}( \mathbb{R}^d)$ uniformly with respect to $j$. The sequence $\{  \tilde \chi P^j \tilde \chi [ \zeta_{\epsilon_j}(D_x)  , B^j ]  \chi \mu_j \}$ is also precompact in $L^1$ by the same argument.

Let us now study  the sequence $\{ \tilde \chi P^j \tilde \chi\zeta_{\epsilon_j}(D_x)  [\chi, B^j]\mu_j\}.$
Here the main difference with respect to the previous analysis is that $\mu_j$ is not necessarily bounded in $\mathcal{M}_0$. However, according to Lemma~\ref{eqcov}  ~\eqref{ii)}, $\mu_j$ is bounded in $\mathcal{M}_{d+ 1/2} ( \mathbb{R}^d)$ only, i.e. its mass on balls of radius $R> 1$ can grow at most like $R^{d+ \frac 12}$. From Proposition~\ref{compactbis}, we get  that the family of operators
 $$ \tilde \chi P^j \tilde \chi  \zeta_{\epsilon_j} (D_x) [\chi, B^j] \langle x \rangle ^{d+\frac 12} $$
 is uniformly in $j$ bounded from $\mathcal{M}_0$ to $W^{1- \epsilon, 1}$ for an $\epsilon\in (0,1)$. 
Then the  boundedness of $ \langle x \rangle ^{-(d+\frac 12)}\mu_j $ in $\mathcal{M}_0$ from Lemma~\ref{eqcov}~\eqref{ii)} and Proposition~\ref{compactness} give that $\{ \tilde \chi P^j \tilde \chi\zeta_{\epsilon_j}(D_x)  [\chi, B^j]\mu_j\}$ is precompact in $L^1_{loc}(\mathbb R^d)$.

We get similarly the relative compactness of the sequence 
$ \{\tilde \chi P^j \tilde \chi\zeta_{\epsilon_j}(D_x) [\chi, Q^j] f_j\}$ by using the boundedness of $ \langle x \rangle ^{-(d+\frac 12)}f_j $ in $\mathcal{M}_0$ which follows from the second assertion of Proposition~\ref{eqcovbis}~\eqref{iv)}.

It remains to study the second  term in \eqref{terms}:
$$\tilde \chi P^j \tilde\chi(B^j  V_j+Q^jg_j).$$   
For this we first note that we have the following property:
\begin{equation}\label{lem4.4}\lim_{j\rightarrow + \infty}  \| V_j \|_{L^1} + \| g_j \|_{L^1}=0.\end{equation}
The $L^1-$convergence of $V_j$'s follows from Lemma~\ref{eqcov} (i)-(ii) exactly as in \cite[(2.8)]{DePRi}. For sake of completeness let us recall the short argument.
For $\epsilon_j < \frac 1 8$, the function $V_j$ is supported in $B_{3/4}+ B_{1/8} \subseteq B_{7/8}$, and
$$
\int_{B_{7/8} }|V_j| (x) dx  \leq \int_{B_1} | \omega_0 \chi \nu_j - \chi \mu_j|  \leq \frac{ \bigl |\omega_0 T^{x_0, r_j}_\sharp |\mu|_s - T^{x_0, r_j}_\sharp \mu \bigr| (B_1)}{ |\mu|_s (B(x_0, r_j)}$$
$$ \leq \frac{ \bigl |\omega_0 |\mu|_s - \mu_s \bigr| B(x_0, r_j)}{ |\mu|_s (B(x_0, r_j))} +\frac{ |\mu_a| B (x_0, r_j)}{ |\mu|_s (B(x_0, r_j)}  \leq  \avint_{B(x_0, r_j)} \Bigl| \frac {d\mu} { d|\mu|} (x_0) -\frac {d\mu} { d|\mu|} (x)\Bigr| +\frac{ |\mu_a| B (x_0, r_j)}{ |\mu|_s (B(x_0, r_j)}.$$
Then we deduce from Lemma~\ref{eqcov} (i) the $L^1-$convergence of $V_j$ to zero. On the other hand, from  Proposition~\ref{eqcovbis} (iv), we get  that $ \chi f_j$, and hence also $g_j= \chi_j \star \chi f_j$, converges to $0$ in $L^1$. 

Now we can use $L^1-L^{1,\infty}$ estimates for the $0-$order operators  $ \tilde \chi P^j \tilde \chi B^j_l$ and $ \tilde \chi P^j \tilde \chi Q^j$ that are Calderon-Zygmund operators, the fact that the bound  depends only on a finite number of semi-norms \cite{CoMe}, that these semi-norms are uniformly bounded in $j$, and the convergence \eqref{lem4.4} to get a convergence in measure:
\begin{multline}\label{measure}
\sup_{\lambda\geq 0}\lambda \mathcal L^d(\{|  \tilde \chi P^j \tilde \chi(B^jV_j+Q^jg_j)|>\lambda\})\\
\leq  \sup_{\lambda\geq 0}\lambda\mathcal L^d(\{ \tilde \chi P^j \tilde \chi V_j|>\frac \lambda {2}\})+\sup_{\lambda\geq 0}\lambda\mathcal L^d(\{| \tilde \chi P^j \tilde \chi Q^j g_j|>\frac \lambda {2}\})\\
\leq C \| V_j \|_{L^1} +C \| g_j \|_{L^1}\overset{j\rightarrow\infty}{\longrightarrow}0.
\end{multline}
To conclude, we are going to use following result from \cite{DePRi}.
\begin{lemma}[\protect{\cite[Lemma 2.2]{DePRi}}]\label{passing}
Consider $\{h_j\}$ a sequence of $L^1-$functions supported in $B_1$ satisfying: 
\begin{enumerate}[label=\alph*)]
\item The sequence $h_j$ converges weakly to $0$, $h_j \overset{*}{\rightharpoonup} 0$ in $\mathcal{D}'( \mathbb{R}^d)$
\item The negative part of $h_j$ tends to $0$ in measure
$$ \forall \lambda >0,  \lim_{j\rightarrow + \infty} \mathcal{L}^d ( \{ h_j^-  > \lambda\}) = 0$$
\item The sequence of negative parts $h_j ^-$ is equi-integrable,
$$ \lim_{\mathcal{L}^d(E) \rightarrow 0} \sup_{j \in \mathbb{N}} \int_{E} h_j ^- dx =0
$$
\end{enumerate}
Then 
$$h_j\overset{L^1_{loc}(B_1)}{\longrightarrow}0.
$$
\end{lemma}
\begin{proof}
Let us recall for completeness the short proof from~\cite{DePRi}. 
Let $\varphi \in C^\infty_0 (B_1)$ with $0 \leq \varphi \leq 1$. Since $h_j$ is supported in $B_1$, it is enough to prove 
$$ \lim_{j\rightarrow +\infty} \int_{B_1} \varphi |h_j| dx =0.$$
We write 
$$ \int_{B_1} \varphi |h_j| dx= \int_{B_1} \varphi h_j dx + 2 \int_{B_1} \varphi h_j^- dx \leq \int_{B_1} \varphi h_j dx + 2 \int_{B_1}  h_j^- dx.$$
As $h_j \overset{*}{\rightharpoonup} 0$ it is enough to show that the last integral converges to $0$. 
Then from the equi-integrability, for any $\varepsilon >0$ there exists $\delta >0$ such that 
$$ \mathcal{L}^d(E) \leq \delta \Rightarrow  \sup_{j \in \mathbb{N}} \int_{E} h_j ^- dx \leq \varepsilon.$$
From the convergence in measure we have 
$$ \exists J; \forall j \geq J, \mathcal{L}^d( \{ h_j ^- > \varepsilon\} \leq \delta.$$ 
We deduce that for $j\geq J$
$$ \int_{B_1 } h^-_j dx \leq \int _{ \{ h_j ^- > \varepsilon\}\cap B_1} h^-_j + \int _{ \{ h_j ^- \leq \varepsilon\}\cap B_1} h^-_j \leq \varepsilon (1+ \mathcal{L}^d(B_1)).
$$
\end{proof}
We now check that 
$$ h_j := \tilde \chi P^j \tilde\chi(B^j V_j+Q^jg_j) , $$ 
satisfies the assumptions of Lemma~\ref{passing}: 
\begin{enumerate}[label=\alph*)]
\item from \eqref{lem4.4} we have that $V_j$ and $g_j$ are converging to $0$ in $L^1$, thus $h_j$ converges weakly to $0$,
\item from~\eqref{measure}, $h_j$ and hence also $h_j^-$ tends to $0$ in measure,
\item as $u_j\geq 0$, the negative part of $ h_j$ is bounded by $ (\chi \nu_j- h_j)^+$ which we already proved it  is relatively compact in $L^1_{loc}(\mathbb R^d)$.  As a consequence, the negative part of $h_j$ is equi-integrable.
\end{enumerate}
Thus by applying Lemma~\ref{passing} we deduce that the sequence $\{h_j\}$ converges to $0$ in $L^1_{loc}(B_1)$. 
This concludes the fact that $u_j$ converges in $L^1_{loc}(\mathbb R^d)$, and finishes the proof of Proposition~\ref{dissoc} and hence of Theorem~\ref{thWFL1}.
\end{proof}

\section{Proof of Theorems \ref{corWFL1},  \ref{corellL2}, and \ref{corellL1}} \label{sectcorWFl1}

\begin{proof}[Proof of Theorem~\ref{corWFL1}]
The first assertion of Theorems \ref{corWFL1} is equivalent to 
$$ \mu \in \mathcal{M}_t, \Pi_{13}(WF_{L^1}(\mu)))=\varnothing\iff \mu\in L^1_{loc}.$$
If $\mu\in L^1_{loc}$ then all points $(x,\omega)\in\mathbb{R}^d\times \mathbb R^m$ are in the complementary of the projection of the wave front set, $\Pi_{13}(WF_{L^1}(\mu)))$ since \eqref{WFL1} is satisfied with $B=\chi\in C^\infty_c( \mathbb{R}^d)$ equal to $1$ near $x$.
Conversely, if $\Pi_{13}(WF_{L^1}(\mu)))=\varnothing$ then Theorem \ref{thWFL1} ensures  that $\mu\in L^1$.

Concerning the second assertion, let us first consider $x\notin \mbox{supp}\, |\mu|_s$. Then there exists $\chi\in C^\infty_c(\mathbb{R}^d)$ equal to $1$ near $x$ and such that $\chi\mu\in L^1$. The first assertion ensures that $WF_{L^1}(\chi \mu)=\varnothing$, so using \eqref{WFloc} we get $x\notin \Pi_{1}(WF_{L^1}(\mu)))$. So we have the inclusion $\Pi_{1}(WF_{L^1}(\mu)))\subseteq \mbox{supp}\, |\mu|_s.$
On the other hand Theorem \ref{thWFL1} gives us the existence of a set $N$ of null $|\mu|_s$-measure such that for $x\in N^c$ we have $(x, \frac{d\mu}{d|\mu|}(x))\in \Pi_{13}(WF_{L^1}(\mu)))$ so in particular $x\in \Pi_{1}(WF_{L^1}(\mu)))$. Therefore we get the last assertion. 

\end{proof}

\begin{proof}[Proof of Theorems~\ref{corellL2} and \ref{corellL1}]  Theorem~\ref{corellL2} is clearly a consequence of Theorem~\ref{corellL1}. By the same argument as in the proof of the last point in Remark~\ref{front-onde-rem},  we can assume that $\mu$ is supported in a neighborhood of $x_0$ where the elliptic assumption holds. Now, by standard elliptic regularity we can invert $A$ on the support of $\mu$ and get that 
$$ \mu \in W^{k-\epsilon, 1},\, \forall \epsilon >0,$$ 
the $\epsilon$-loss coming from the defect of $L^1$-boundedness of $0$-th order operators.
  We now come back to  
$$ A (B\mu) =  (AB - CA) \mu + CA \mu \in  \Psi^k(W^{1- 2\epsilon, 1}) +\Psi^k (L^1) \subset \Psi^k ( L^1),
$$ 
(by choosing $2\epsilon <1$)
because according to~\eqref{commutateur}, $AB- CA  \in \Psi^{2k-1}$. 
We deduce from the third point in Remark~\ref{front-onde-rem} that $WF_{L^1} (B\mu) = \varnothing$.
From Theorem~\ref{corWFL1} we get Theorem~\ref{corellL1}.

\end{proof}

\section{On the singular part of elementary constrained measures and more}\label{sectpropag}
In \cite{DePRi} was considered the question of elementary constrained measures, i.e. measures of the form $\mu_0=\omega_0  \nu$ with $\omega_0\in\mathbb R^{m*}, \nu\in\mathcal M_t(\mathbb R^d,\mathbb R)$, constrained to vanish under the action of a first order linear constant coefficient operator $\sum_{|\alpha|= 1}A_\alpha\partial_x^\alpha $, $A_\alpha\in\mathbb R^{n\times m}$, satisfying
$$C:=\{\xi\in\mathbb R^d, \sum_{|\alpha|=1}A_\alpha \omega_0\,\xi^\alpha=0 \}\neq \{0_{\mathbb R^d}\}.$$
It was noticed in \cite{DePRi} that in this case, passing in Fourier the equation yields $\mbox{supp }\hat\nu\subseteq C$ and $\nu$ is invariant in the directions orthogonal to $C$. Indeed, for $\tilde x\in C^\perp$, using that $\widehat{\nu}$ is supported in $C$,
 $$\nu(x+\tilde x)=\int e^{i(x+\tilde x)\xi}d \hat\nu(\xi)=\int_{C}  e^{i(x+\tilde x)\xi}d \hat\nu(\xi)=\nu(x).$$
 However, this property gives useful informations only for first order operators as in this case, the set $C$ is a vector space. For higher order operators the example of a simple scalar wave equation ($n=m=1$, $\partial_t^2 - \Delta$) for which the characteristic manifold is 
 $$C= \{ (\tau, \eta)\in \mathbb{R}^{d+1}; \tau ^2 = |\eta|^2\},$$
whose orthogonal set $C^\perp$ is reduced to $\{0_{\mathbb R^{d+1}}\}$, shows that this invariance property may provide no information. In this section, we give a few elements toward the understanding of more general cases, providing information about the structure of polarisation of the singular part of a constrained measure in non-elementary constrained measure cases. We shall not use our wave front $WF_{L^1}$, but rely rather on propagation of singularities ideas introduced previously for the study of systems of PDE's, and in particular systems of wave equations (see~\cite{BuLe}). The results in this section can be seen as a very first step toward of a general theory of microlocal defect measures at the $L^1$ level, which is still quite far away!
  
We first notice that  the special choice $\mu_0 = \omega_0  \nu$ reduces the study to a system of $n$  scalar equations on the measure~$\nu$,
$$ A_j \nu = \sum_{|\alpha |=1} A^j_\alpha \omega_0 \partial_x^\alpha \nu=0, $$
where $A^j_\alpha$ is the $j$-th line of the matrix $A_\alpha$, and the invariance along $C^\perp$ is just the invariance of $\nu$ by each of the $n$ transport equations $A_j$. 
When studying propagation of singularities for systems, the natural extension of scalar equations (see~\cite[Sections 3 \& 4]{De} and~\cite{BuLe} for boundary value problems) is to study systems with diagonal (or at least diagonalisable) {\em scalar} principal parts, and we start with an elementary result in this simpler case.
\begin{lemma}\label{inv1}
Consider a smooth vector field  on $\mathbb{R}^d$
$$A:= \sum_{i=1}^d b_i (x) \,\partial_{x_i}, \quad ^t A:= A + \divbis (b) =  \sum_{i=1}^d\partial_{x_i}\circ b_i,$$ 
a function $ H\in C( \mathbb{R}^d, M_{m\times m} (\mathbb{R}))$
 and  $\mu_0 \in \mathcal{M}_{\text{loc}} ( \mathbb{R}^d, \mathbb{R}^m)$ solution of the system\footnote{Abusing notations we still denote by $A$ the vector field $A \text{Id}_{\mathbb{R}^m}$ and $^tA= \sum_i \partial_{x_i} \circ b_i\,  \text{Id}_{\mathbb{R}^m}$ its transpose.}
\begin{equation}\label{transport1}
 ^t A \mu_0 + H \mu_0 =\divbis (b \mu_0) + H \mu = \Bigl( \sum_{i=1}^d \partial_{x_j} (b_i \,\mu_{0,k}) + \sum_{p=1}^m h_{k,p} \, \mu_{0,p}\Bigr)_{k=1, \dots, m}=0_{\mathbb R^m}.
 \end{equation}
Then the set
$$ Z= \{ \bigl(x, \frac{d\mu_0}{d|\mu_0|} (x)\bigr), x\in \mathbb{R}^d\} , \qquad (\text{ resp. }Z_s= \{ \bigl(x, \frac{d\mu_{0,s}}{d|\mu_{0,s}|} (x)\bigr), x\in \mathbb{R}^d\} )$$ 
is $|\mu_0|$-a.e. (resp. $|\mu_{0,s}|$-a.e) invariant by the flow 
$$ (x_0, \omega_0) \mapsto  \bigl(x(s), \frac{\omega(s)}{\| \omega(s)\|}\bigr), $$
where $x(s)= \phi(s,x_0)$ and $\omega(s)= \widetilde{\phi} (s,x_0, \omega_0)$ are defined by 
$$ \dot{x} (s) = b(x(s)),\; x(0) =x_0 , \qquad \dot{\omega} (s) = - H (x(s)) \omega(s), \; \omega(0) =\omega_0 .$$
\end{lemma}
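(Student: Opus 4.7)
The plan is to adapt the method of characteristics to measures: reduce locally to a translation-invariant vector measure via a change of unknown, and then read the invariance of the polar graph off the resulting product structure. Heuristically, for a smooth solution $^tAu+Hu=0$ the polarisation along the integral curves of $b$ evolves by $\dot\omega=-H\omega$ up to a scalar factor coming from $\mathrm{div}\,b$, which drops out after normalisation by $\|\omega\|$; the measure-theoretic proof makes this precise via a fundamental-matrix computation.

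Since the claim is local and compatible with $C^1$ pushforward, I would work near a point where $b\neq 0$ and use a local diffeomorphism $\Psi$ with $\Psi_\ast b=\partial_{y_1}$. Writing again $\mu_0,H,\theta$ for the transformed objects and noting that $\mathrm{div}\,\partial_{y_1}=0$, the equation becomes
\[
\partial_{y_1}\mu_0 + H\mu_0 = 0 \quad \text{in } \mathcal D'(\mathbb R^d,\mathbb R^m).
\]
I introduce the fundamental matrix $R(y_1,y')\in GL_m(\mathbb R)$ solving $\partial_{y_1}R+HR=0$, $R(0,y')=\mathrm{Id}$, which is $C^1$ in $y$. Since $\partial_{y_1}R^{-1}=R^{-1}H$, the Leibniz rule in $\mathcal D'$ gives
\[
\partial_{y_1}(R^{-1}\mu_0) = R^{-1}H\,\mu_0 + R^{-1}\partial_{y_1}\mu_0 = 0 .
\]
Hence $\tilde\mu:=R^{-1}\mu_0$ is an $\mathbb R^m$-valued Radon measure with vanishing $\partial_{y_1}$-derivative, so it is translation-invariant in $y_1$, which forces a product decomposition $\tilde\mu=\mathcal L^1(dy_1)\otimes\sigma(dy')$ for some $\sigma\in\mathcal M_{\mathrm{loc}}(\mathbb R^{d-1},\mathbb R^m)$.

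Writing $\sigma=\theta_\sigma|\sigma|$, I would then obtain $\mu_0(dy)=R(y_1,y')\theta_\sigma(y')\,dy_1\,d|\sigma|(y')$, so $|\mu_0|(dy)=\|R(y_1,y')\theta_\sigma(y')\|\,dy_1\,d|\sigma|(y')$ and
\[
\theta(y_1,y') = \frac{R(y_1,y')\theta_\sigma(y')}{\|R(y_1,y')\theta_\sigma(y')\|}\quad |\mu_0|\text{-a.e.}
\]
The cocycle identity $R(y_1+s,y')=R(s;(y_1,y'))\,R(y_1,y')$, with $R(s;z_0)$ the solution of $\dot R=-H(\phi_s(z_0))R$, $R(0;z_0)=\mathrm{Id}$ (which is exactly the $\widetilde{\phi}$ of the lemma in these coordinates), then yields $R(s;(y_1,y'))\,\theta(y_1,y')\parallel\theta(y_1+s,y')$ for $|\mu_0|$-a.e.\ $(y_1,y')$. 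After normalisation this is precisely the flow invariance of $Z$ in the straightened coordinates. Transporting back via $\Psi^{-1}$ and covering $\mathrm{supp}\,|\mu_0|\cap\{b\neq 0\}$ by countably many such charts gives the claim there; on $\{b=0\}\cap\mathrm{supp}\,|\mu_0|$ the flow is stationary and the equation degenerates to $H\mu_0=0$, so $\theta\in\ker H$ $|\mu_0|$-a.e.\ and $e^{-sH(x_0)}\theta(x_0)=\theta(x_0)$, making the invariance trivial. The same argument applied to the singular part of $\sigma$ delivers the corresponding statement for $Z_s$ with respect to $|\mu_{0,s}|$.

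The main obstacle is the structural step: justifying that a vector-valued Radon measure $\tilde\mu$ with $\partial_{y_1}\tilde\mu=0$ really decomposes as $\mathcal L^1\otimes\sigma$, and that the resulting $|\sigma|$ is a genuine transverse disintegration of the non-Lebesgue part of $|\mu_0|$, so that the exceptional set where $\theta_\sigma$ is ill-defined stays $|\mu_0|$-negligible after multiplication by the smooth invertible $R$. Once this structural lemma is secured, all remaining steps are formal algebra with the ODE cocycle and the Leibniz rule in $\mathcal D'$.
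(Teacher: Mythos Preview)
Your route via local flow-box coordinates is different from the paper's. The paper does not straighten $b$; instead it introduces an auxiliary time variable $s$, observes that the stationary solution $\mu_0$ also solves $\partial_s\mu+\divbis(b\mu)+H\mu=0$, and solves this evolution equation by pushing forward along the flow $\phi(s,\cdot)$ together with a matrix variation-of-constants $C(s,x)$. Uniqueness then yields the \emph{global} identity $\phi(-s,\cdot)_\sharp\,\mu_0 = C^{-1}(s,\cdot)\,\mu_0$, from which the polarisation relation is read off directly, without any case distinction on the zeros of $b$. Your approach, by contrast, produces the explicit product structure $\tilde\mu=\mathcal L^1\otimes\sigma$ in straightened coordinates, which is a standard fact once $\partial_{y_1}\tilde\mu=0$ and is useful if one wants to analyse the transverse measure $\sigma$; the paper's argument is coordinate-free and uniform but less explicit about this transverse structure.

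There is, however, a genuine flaw in your handling of $\{b=0\}$. The equation does \emph{not} degenerate to $H\mu_0=0$ there: $\divbis(b\mu_0)$ involves derivatives of $b$, not only its values, so at an isolated zero of $b$ (e.g.\ $b(x)=x$ in one dimension, where $\divbis b\equiv 1$) the zero-order contribution is $(\divbis b)\mu_0+H\mu_0$, not $H\mu_0$. Your claimed conclusion $\theta\in\ker H$ on $\{b=0\}$ is therefore unjustified as written. The conclusion itself is correct---one can extract it from the paper's global identity by restricting to an atom at a fixed point of $\phi$, which forces $e^{sH(x_0)}\theta(x_0)=\theta(x_0)$ for all $s$---but this requires precisely the flow argument you are trying to bypass. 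Since the lemma allows $b$ to vanish and $|\mu_0|$ to charge $\{b=0\}$, your chart-by-chart proof is incomplete; for the downstream Theorem~\ref{uncouple} the vector fields are assumed linearly independent, hence nowhere vanishing, so there your method would suffice.
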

\begin{proof} The proof is easy: we just solve the equation! More precisely, we solve the associated equation 
\begin{equation}\label{transport2}
 \partial_s \mu +\divbis (b \mu) + H \mu =0, \mu_{\vert_{s=0}} = \mu_0 
 \end{equation}
Since by the duality method the solution to this equation is unique, if $\mu_0$ solves~\eqref{transport1}, then the unique solution to~\eqref{transport2} is given by $\mu(s) \equiv \mu_0$. Let us now assume that $\mu$ solves~\eqref{transport2} and define $\nu$ by 
 $$ \nu = \phi(-s, \cdot )_\sharp \mu \Leftrightarrow \mu = \phi(s, \cdot )_\sharp \nu.$$
Here we abuse slightly notations and denote by $\phi(s,\cdot)_\sharp$ the push forward of the measure $\mu$ by the map
$$ (s,x) \mapsto (s, \phi(s,x)),$$
defined by 
$$ \langle \phi(s,\cdot)_\sharp \mu, \psi\rangle = \langle \mu, \psi (s, \phi(s,x))\rangle,$$
for any $ \psi \in C^\infty_0 ( \mathbb{R}^{d+1},\mathbb R ^m).$ By using the definition of $\phi$ we get
$$ \langle \partial_s \mu +\divbis (b \mu)+H \mu , \psi \rangle = - \langle \mu , ( \partial_1 + b .\nabla _x ) \psi\rangle  +\langle H \mu , \psi \rangle $$
$$  = -\langle \nu,  ( \partial_1 \psi + b. \nabla _x \psi ) _{|_{(s, \phi(s,x))}} \rangle +\langle H \mu , \psi \rangle 
  = -\langle \nu,   \partial_s  (\psi (s, \phi(s,x))) \rangle +\langle H \mu , \psi \rangle $$
  $$= \langle  \partial_s \nu,  \psi _{|_{(s, \phi(s,x))}} \rangle+\langle H(\cdot)\, \phi(s, \cdot )_\sharp \nu , \psi \rangle 
  =  \langle  \phi(s, \cdot )_\sharp (\partial_s \nu + H(\phi(s,x))\nu ),  \psi \rangle.
$$
 We deduce that $\mu$ solves~\eqref{transport2} if and only if $\nu= \phi(-s, \cdot) _\sharp \mu(s, \cdot)$ solves 
$$
\partial_s \nu + H(\phi) \nu=0, \quad \nu_{\vert_{s=0}} = \mu_0.
$$
 To solve this equation we apply the variation of parameter method and compute, with $C(s,x)$ to be defined,  
$$
  \partial_s ( C(s,x)  \,\nu)(s,x) =\left(\partial_s C(s,x)  - C(s,x) H(\phi(s,x) \right) \nu (s,x).
$$
 Let us now define $C$ as the solution to the differential equation
 $$ \partial_s C (s,x) =  C(s,x) H(\phi(s,x)), \quad C_{\vert_{s=0}} = \text{Id}.$$ 
 Remark that $C$ is invertible as $C^{-1}$ is the solution to  
 $$ \partial_s C^{-1} (s,x)= -H( \phi(s,x))C^{-1} (s,x) , \quad {C^{-1}}_{\vert_{s=0}}= \text{ Id} .$$
 We get then
 $$ C(s,x) \nu (t, \cdot) = C(0,x) \nu(0, \cdot) = \mu_0 .$$
 Summarising, we proved 
 $$\mu_0 = \mu(s,\cdot ) =  \phi(s,\cdot) _\sharp \nu (s, \cdot) = \phi(s,\cdot) _\sharp \left( C^{-1} (s,x) \mu_0\right) \Leftrightarrow   \phi(-s,\cdot) _\sharp \mu_0 = C^{-1} (s,x) \mu_0.$$
This implies that the polarisation $\frac{d\mu_0}{d|\mu_0|}$ of $\mu_0$ at $\phi(s,x)$ is  colinear to $C^{-1}(s,x) $ applied to the polarisation of $\mu_0$ at $x$ (colinear rather than equal because the Jacobian determinant of the change of variables $x \mapsto \phi(s, x)$ is not necessarilly equal to $1$ and $C^{-1}$ is not necessarily an isometry).    
\end{proof}
Now, we turn to a more complicated setting involving a first order {\em non diagonal} equation. We consider two smooth vector fields $A_1, A_2$ given as previously by functions $b_1$ and $b_2$ on $\mathbb{R}^2$ and $\mu = (\mu_1, \mu_2) \in \mathcal{M}_{loc}( \mathbb{R}^2, \mathbb{R}^{m_1+ m_2})$ solution to the coupled system
\begin{equation}\label{systmu}
 \begin{pmatrix} ^tA_1& 0 \\ 0 & ^t A_2 \end{pmatrix} \begin{pmatrix} \mu_1 \\ \mu_2\end{pmatrix} + \begin{pmatrix}H_{11} & H_{12} \\ H_{21} & H_{22} \end{pmatrix} \begin{pmatrix} \mu_1 \\ \mu_2\end{pmatrix} =0
\end{equation}
where we now assume for simplicity that the matrices $H_{jk}$ is $C^\infty$. We can now state our result which is reminiscent of propagation of singularities type results. 
\begin{theorem}\label{uncouple}
Assume that the vector fields $A_1$ and $A_2$ are linearly independent at each point $x\in \mathbb{R}^2$, and that $\mu$ solves \eqref{systmu}. Then for $j\in\{1,2\}$ the sets 
$$ Z_j= \{ \bigl(x, \frac{d\mu_j}{d|\mu_j|} (x)\bigr), x\in \mathbb{R}^d\} $$ 
are $|\mu_{j,s}|$-a.e. invariant by the flows 
$$ (x_0, \omega_0) \mapsto (x_j(s), \frac{\omega_j(s)}{\|\omega_j(s)\|}), $$
where 
$$ \dot{x}_j (s) = b_j(s), \; x_j(0) =x_0 , \quad \dot{\omega}_j (s) = - H_{jj} (x_j(s)) \,\omega_j(s),\;\omega_j(0) =\omega_0 .$$
In other words, as far as their singular parts are concerned, the propagation formulas for $\mu_{j,s}$ are obtained by forgetting the coupling terms $H_{12}$ and $H_{21}$ in the equation.
\end{theorem}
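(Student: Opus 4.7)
The plan is to decouple the singular parts of $\mu_1$ and $\mu_2$ via the wave cone of the full first-order system, then extend the push-forward argument of Lemma~\ref{inv1} to the resulting inhomogeneous equation by absorbing the coupling term into an absolutely continuous correction. First I would rewrite~\eqref{systmu} as $\mathcal{A}\mu=0$ with principal symbol
$$\sigma_1(\mathcal{A})(x,\xi)=\mathrm{diag}\bigl(-i\,b_1(x)\cdot\xi\,\mathrm{Id}_{m_1},\,-i\,b_2(x)\cdot\xi\,\mathrm{Id}_{m_2}\bigr).$$
Since $b_1(x)$ and $b_2(x)$ are linearly independent in $\mathbb{R}^2$, for every $\xi\neq 0$ at most one of $b_j(x)\cdot\xi$ vanishes, and the variable-coefficient wave cone is
$$\Lambda_{\mathcal A}(x)=(\mathbb{R}^{m_1}\times\{0\})\cup(\{0\}\times\mathbb{R}^{m_2}).$$
The variable-coefficient version of the De~Philippis--Rindler theorem (acknowledged in \cite[Remark 1.4]{DePRi}) then yields $\frac{d\mu}{d|\mu|}(x)\in\Lambda_{\mathcal A}(x)$ for $|\mu|_s$-a.e.\ $x$, producing a Borel partition $\mathbb{R}^2=E_1\sqcup E_2$ (modulo $|\mu|_s$-null sets) such that $|\mu_{1,s}|$ is concentrated on $E_1$ and $|\mu_{2,s}|$ on $E_2$; in particular $\mu_{1,s}$ and $\mu_{2,s}$ are mutually singular.

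Next, treating the right-hand side of $^tA_1\mu_1+H_{11}\mu_1=-H_{12}\mu_2$ as a time-independent source, I would run the push-forward and variation-of-parameters computation of Lemma~\ref{inv1} verbatim: writing $\phi_1(s,\cdot)$ for the flow of $A_1$ and $C_1(s,x)$ for the matrix ODE attached to $H_{11}$ along this flow, this produces a Duhamel identity of the shape
$$\phi_1(-s,\cdot)_\sharp\mu_1 \;=\; C_1^{-1}(s,\cdot)\,\mu_1 \;+\; I_1(s),$$
where $I_1(s)$ is a time integral over $r\in[0,s]$ of $\phi_1(-r,\cdot)_\sharp(H_{12}\mu_2)$ against a smooth kernel built from $C_1$. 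The singular part of the left-hand side is $\phi_1(-s,\cdot)_\sharp\mu_{1,s}$ because $\phi_1$ is a diffeomorphism, so push-forward preserves the Lebesgue decomposition. On the right-hand side, the $H_{12}\mu_{2,a}$ contribution to $I_1(s)$ is automatically in $L^1_{loc}$; the only candidate singular contribution is the analogous time integral of $\phi_1(-r,\cdot)_\sharp(H_{12}\mu_{2,s})$. If this integral is absolutely continuous with respect to $\mathcal L^2$, taking singular parts of both sides reduces the identity to $\phi_1(-s,\cdot)_\sharp\mu_{1,s}=C_1^{-1}(s,\cdot)\mu_{1,s}$, which is precisely the conclusion of Lemma~\ref{inv1} applied to $\mu_{1,s}$ alone and establishes the claimed invariance of $Z_{1,s}$ under the stated flow. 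The case $j=2$ is entirely symmetric.

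The main obstacle is therefore to prove that the \emph{coupling integral} $\int_0^s \phi_1(-r,\cdot)_\sharp(H_{12}\mu_{2,s})\,dr$ is absolutely continuous with respect to $\mathcal L^2$. The heuristic reason is geometric: by the wave-cone analysis of the first step, at $|\mu_{2,s}|$-a.e.\ point of $E_2$ the polarisation is $(0,\omega_2)$ and the associated covector $\xi$ is normal to $b_2(x)$, so $\mu_{2,s}$ is morally supported along integral curves of $b_2$; by the hypothesis that $b_1$ and $b_2$ are linearly independent, these curves are transverse to $b_1$, and flowing $E_2$ along $\phi_1$ therefore sweeps out a genuinely $2$-dimensional tube. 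A Fubini/coarea computation along the $A_1$-flow should then turn the $r$-average of the singular push-forwards into an $L^1_{loc}$ density. Making this precise is the only substantive step beyond the wave-cone decoupling and the scalar Lemma~\ref{inv1}: it requires either a disintegration of $\mu_{2,s}$ along the integral curves of $b_2$, whose transverse Jacobian under $\phi_1$ is non-degenerate, or an appeal to rectifiability-type structure results for constrained singular measures in the spirit of the refinements of~\cite{DePRi} mentioned in the introduction.
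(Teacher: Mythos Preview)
Your first step is correct and useful: the wave-cone computation does give $\Lambda_{\mathcal A}(x)=(\mathbb{R}^{m_1}\times\{0\})\cup(\{0\}\times\mathbb{R}^{m_2})$, and the variable-coefficient De~Philippis--Rindler theorem then yields $\mu_{1,s}\perp\mu_{2,s}$. The Duhamel identity you derive from Lemma~\ref{inv1} is also fine.

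The genuine gap is the absolute continuity of the coupling integral $\int_0^s \phi_1(-r,\cdot)_\sharp(H_{12}\mu_{2,s})\,dr$. Your heuristic says $\mu_{2,s}$ is ``morally supported along integral curves of $b_2$'', but the wave-cone conclusion constrains only the \emph{polarisation} $\frac{d\mu}{d|\mu|}$ and the associated covector $\xi$, not the geometric support of $\mu_{2,s}$. In the normalised picture $A_1=\partial_{x_1}$, your integral is the $x_1$-convolution $\mu_{2,s}\ast_{x_1}\mathbf 1_{[0,s]}$; for a measure of the form $\delta_{x_1=0}\otimes\nu$ with $\nu$ singular in $x_2$ this remains singular with respect to $\mathcal L^2$. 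You would therefore have to exclude any $x_2$-singular component of $\mu_{2,s}$, and neither mutual singularity with $\mu_{1,s}$ nor the disintegration/rectifiability inputs you invoke provide this. The rectifiability refinements of \cite{DePRi} give dimensional bounds, not the one-dimensional tangential structure along $b_2$ that your coarea argument would need.

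The paper avoids this obstacle entirely by working in frequency rather than in position. It introduces a conical cut-off $\zeta_1(\xi)$ supported near $\mathrm{Char}(A_1)=\{b_1(x_0)\cdot\xi=0\}$, shows by ellipticity of $A_1$ off that set that $(1-\zeta_1)(D_x)\mu_1\in W^{1-\epsilon,1}$ (so $\mu_{1,s}=(\zeta_1(D_x)\mu_1)_s$), and then applies $\zeta_1(D_x)$ to the $\mu_1$-equation. The crucial two-dimensional fact is that $\mathrm{Char}(A_1)$ and $\mathrm{Char}(A_2)$ meet only at the origin, so one can take a cut-off $\zeta_2$ near $\mathrm{Char}(A_2)$ with $\zeta_1\zeta_2=0$; since $(1-\zeta_2)(D_x)\mu_2\in W^{1-\epsilon,1}$ by the symmetric argument, one gets $\zeta_1(D_x)\mu_2=\zeta_1(D_x)(1-\zeta_2)(D_x)\mu_2\in W^{1-2\epsilon,1}$. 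Thus the coupling term $H_{12}\zeta_1(D_x)\mu_2$ is already in $L^1$ \emph{before} any time integration, and taking singular parts in the push-forward identity of Lemma~\ref{inv1} (with an $L^1$ right-hand side) gives the decoupled equation~\eqref{uncoupled} directly. Your transversality intuition is exactly the geometric shadow of this frequency-space disjointness, but the argument goes through cleanly only on the Fourier side.
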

\begin{remark}
For conciseness, we chose to work with smooth vector fields $A_j$. It is however very likely that this kind of results holds for Lipschitz vector fields. It would be interesting to apply the methods developed in~\cite{AmCr} to deal with lower regularity.
\end{remark}
\begin{proof} 
We reduce the proof to Lemma~\ref{inv1} by showing that locally we have the uncoupling property:
\begin{equation}\label{uncoupled}
^tA_j\, \mu_{j,s} + H_{jj} \,\mu_{j,s}=0.
\end{equation}
We work near a point $x_0 \in \mathbb{R}^2$ and can replace $\mu_j$ by $\chi(x) \mu_j$, $\chi\in C^\infty_0 ( \mathbb{R}^2)$ equal to $1$ near $x_0$ and after a linear change of variables, we can assume that 
$$ A_1 (x_0 ) =\, ^tA_1 (x_0 ) = \frac{ \partial}{\partial x_1}, \quad A_2 (x_0 ) =\, ^tA_2 (x_0 ) = \frac{ \partial}{\partial x_2}.$$
Consider now  a smooth cut off $\zeta_1(\xi) $ vanishing near $0$, homogeneous of degree $0$ outside $\{ \| \xi\| \geq 1\}$ and  equal to $1$ in a small conical neighborhood of $(0,1)\cup (0, -1)$. The first step is the following Lemma. 
\begin{lemma}\label{regularite}
Near $x_0$ we have
$$ (1- \zeta_1)(D_x)  \mu_1 \in W^{1-\epsilon, 1}( \mathbb{R}^2).
$$ 
Notice also that this implies $\mu_{1,s}= (\zeta_1(D_x)  \mu_1)_s$.
\end{lemma}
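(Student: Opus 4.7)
The idea is to use the equation as a source of microlocal regularity for $\mu_1$ off the characteristic direction of $^tA_1$. From the first block of the system~\eqref{systmu},
$$
  {}^tA_1 \mu_1 = -H_{11}\mu_1 - H_{12}\mu_2 =: \nu \in \mathcal{M}_{loc}(\mathbb{R}^2,\mathbb{R}^{m_1}),
$$
and by replacing $\mu_1$ by $\chi\mu_1$ with $\chi\in C^\infty_c(\mathbb{R}^2)$ equal to $1$ near $x_0$ one reduces to a compactly supported measure, the commutator $[{}^tA_1,\chi]$ being a smooth multiplication that keeps the right-hand side in $\mathcal{M}_{loc}$. The principal symbol of $^tA_1$ is $ib_1(x)\cdot\xi$; since $b_1(x_0)=(1,0)$ and the conic support of $1-\zeta_1$ is bounded away from $\pm e_2$, there exist a neighborhood $U$ of $x_0$ and $c>0$ such that $|b_1(x)\cdot\xi|\geq c|\xi|$ whenever $x\in U$ and $\xi\in\mathrm{supp}(1-\zeta_1)\setminus\{0\}$.

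Next I would build a microlocal left parametrix of order $-1$. Choosing $\psi\in C^\infty(\mathbb{R}^2)$ that vanishes near $0$ and equals $1$ for $|\xi|\geq 1$, and $\tilde\chi\in C^\infty_c(U)$ equal to $1$ on $\mathrm{supp}\,\chi$, set
$$
  p(x,\xi) := \frac{(1-\zeta_1(\xi))\,\psi(\xi)\,\tilde\chi(x)}{i\,b_1(x)\cdot\xi} \in S^{-1}_{\mathrm{cl}}(\mathbb{R}^2).
$$
By the standard symbolic calculus of pseudodifferential operators, the associated operator $P$ satisfies
$$
  P\circ{}^tA_1 = (1-\zeta_1)(D_x)\,\psi(D_x)\,\tilde\chi(x) + R,\qquad R\in\Psi^{-1}_{\mathrm{cl}}.
$$
Applying this identity to $\chi\mu_1$ gives
$$
  (1-\zeta_1)(D_x)\,\psi(D_x)\,\tilde\chi\chi\,\mu_1 = P\bigl(\chi\nu + [{}^tA_1,\chi]\mu_1\bigr) - R(\chi\mu_1),
$$
in which every term on the right is the image under a $\Psi^{-1}_{\mathrm{cl}}$ operator of a compactly supported Radon measure. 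Invoking Corollary~\ref{coro} with $\delta=-1$, each such term belongs to $W^{1-\epsilon,1}(\mathbb{R}^2)$ for every $\epsilon\in(0,1)$.

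Finally I would strip off the auxiliary cutoffs on a small neighborhood $V$ of $x_0$: the Fourier multiplier $(1-\psi)(D_x)$ has Schwartz kernel (its symbol is smooth and compactly supported), so $(1-\psi)(D_x)\tilde\chi\chi\mu_1\in C^\infty$; and since $\tilde\chi\chi\equiv 1$ near $x_0$, pseudo-locality of $(1-\zeta_1)(D_x)$ gives $(1-\zeta_1)(D_x)\tilde\chi\chi\mu_1 = (1-\zeta_1)(D_x)\mu_1$ modulo $C^\infty$ on $V$. This yields the claimed $W^{1-\epsilon,1}$ regularity, and the complementary remark $\mu_{1,s}=(\zeta_1(D_x)\mu_1)_s$ then follows since a $W^{1-\epsilon,1}$ function has no singular part. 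The main delicate point is the $\epsilon$-loss: nominally inverting the first-order operator $^tA_1$ should gain a full derivative, but since general $\Psi^0_{\mathrm{cl}}$ operators are not bounded on $L^1$ the effective gain on a Radon measure is only $1-\epsilon$ derivatives, precisely as provided by Corollary~\ref{coro} and reflected in the statement of the lemma.
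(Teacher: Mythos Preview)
Your proof is correct and follows essentially the same route as the paper's: both split off the low frequencies (you via $\psi$, the paper via a cutoff $\zeta_0\in C^\infty_0$ equal to $1$ near the origin), build a parametrix of order $-1$ for $^tA_1$ on the conic support of $1-\zeta_1$ using that the principal symbol $ib_1(x)\cdot\xi$ is elliptic there near $x_0$, and conclude via Corollary~\ref{coro} that the result is a $\Psi^{-1}$ operator applied to a bounded measure, hence in $W^{1-\epsilon,1}$. One small imprecision: your lower bound $|b_1(x)\cdot\xi|\geq c|\xi|$ is stated for all $\xi\in\mathrm{supp}(1-\zeta_1)\setminus\{0\}$, but since $\zeta_1$ vanishes near the origin this support contains small $\xi$ in every direction, including $\xi_1=0$; the bound really only holds on the conic support for $|\xi|\geq 1$, which is exactly why you (correctly) insert $\psi$ in the parametrix symbol.
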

\begin{proof}Let $\zeta_0 \in C^\infty_0(\mathbb{R}^2)$ equal to $1$ near $0$. Since $\zeta_0 (D_x) \in \Psi^{-N}$, we have from Corollary \ref{coro}:
$$ (1- \zeta_1)\zeta_0 (D_x)  \mu_1 \in W^{1-\epsilon, 1}( \mathbb{R}^2).
$$ 
It remains to study 
$$  (1- \zeta_1)(1- \zeta_0) (D_x) \mu_1.
$$
Near $x_0$ the principal symbol $a_1(x, \xi)$ of the operator $^tA_1$  is close to its value at $(x_0, \xi)$ which is $i \xi_1$, hence it is invertible in a neighborhood of  the support of $(1- \zeta_1)(1- \zeta_0)(\xi)$. As a consequence, we can define 
$$ p(x,\xi)= \frac{\chi(x) (1- \zeta_1)(1- \zeta_0)( \xi) } {a_1(x, \xi)}, $$ 
with $\chi\in C^\infty_0$ smooth, equal to $1$ near $x_0$. Applying $p(x, D_x) $ to the equation satisfied by $\mu_1$,
\begin{equation}\label{eqmu1}
 ^tA_1 \, \mu_1 = - H_{11} \,\mu_1 - H_{12}\, \mu_2,
 \end{equation}
we  get by symbolic calculus 
$$ \chi(x)  (1- \zeta_1)(1- \zeta_0)( D_x) \mu_1 = R_1\mu_1 + R_2 \mu_2,$$
where $R_j$ are matrices of pseudodifferential operators of order $-1$, which implies by Corollary \ref{coro} that
$$ \chi(x)  (1- \zeta_1)(1- \zeta_0)( D_x) \mu_1 \in W^{1- \epsilon, 1} (\mathbb{R}^2).$$
\end{proof}

Let now $\widetilde{\zeta_1}$ be a cut-off  equal to $1$ in a neighborhood of the support of $\zeta_1$. 
 Applying $\zeta_1( D_x)$ to equation \eqref{eqmu1} we get 
\begin{equation}\label{new}
  ^tA_1 \,\zeta_1( D_x) \mu_1 + H_{11} \,\zeta_1( D_x) \mu_1 
  \end{equation}
  $$= - H_{12} \,\zeta_1( D_x) \mu_2 +  [ ^tA_1, \zeta_1( D_x) ] \mu_1 + [H_{11},  \zeta_1( D_x) ] \mu_1 + [ H_{12} , \zeta_1( D_x)] \mu_2 ,
$$
Now the key point is that the r.h.s.  is an $L^1$ function in a neighborhood of $x_0$. Indeed, it is clear for the two last term as they are operators of order $-1$ applied to measures so Corollary \ref{coro} can be used\footnote{This is where we use the smoothness of the $A_{j,k}$; this smoothness could be relaxed to H\"older continuity or even log continuity
$$ \|A(x) - A(y)\| \leq \frac{C} { |\log ^\alpha(\|x-y\|)|} , \alpha >1, \| x-y\| \leq \frac 1 2.$$}.
Let us now study the first term. The function $\zeta_1$ is supported in a small conical neighborhood of $(0,1 ) \cup (0, -1) $. We can now choose a smooth function $\zeta_2$ vanishing near $0$, homogeneous of degree $0$ outside $\{ \| \xi\| \geq 1\}$ and  equal to $1$ in a small conical neighborhood of $(1,0)\cup (-1,0)$, and which vanishes on the support of $\zeta_1$.  This is where we use crucially that we are working in $\mathbb{R}^2$:  in higher dimensions, the cut-off $\zeta_1$ would be required to vanish near the characteristic manifold of $X_1$, which at $x_0$ is 
$$ C_1 = \{ \xi; \xi_1=0 \}$$ 
while the cut-off $\zeta_2 $ is required to vanish on the characteristic manifold of $X_2$ which is at~$x_0$,
$$ C_2 = \{ \xi; \xi_2=0 \}.$$
In dimension $2$, these two manifolds intersect at $0_{\mathbb R^2}$. In higher dimension they intersect along the plane 
$$ C_{1,2}= \{ \xi; \xi_1= \xi_2=0 \},$$ 
and consequently such a choice for $\zeta_2$ (equal to $1$ near $C_1$ but vanishing near $C_2$, apart from a neighborhood of $0$) is possible only in dimension $2$. 
\begin{figure}[h]\label{fig-1}
\begin{center}
\begin{tikzpicture}[scale =1]
\draw [thick, blue] (-0.6, 2.75) -- (-0.3,0.75);
\draw [thick, blue](-0.3,0.75)  arc (-165:-15:0.31);
\draw [thick, blue](0.6,2.75)-- (0.3, 0.75) ;
\draw [thick, blue] (-0.6, -2.75) -- (-0.3,-0.75);
\draw [thick, blue](-0.3,-0.75)  arc (165:15:0.31);
\draw [thick, blue](0.6,-2.75)-- (0.3, -0.75) ;
\draw [thick, blue] ( 2.75, 0.6) -- (0.75, 0.3);
\draw [thick, blue](0.75, 0.3)  arc (-255:-105:0.31);
\draw [thick, blue](2.75, -0.6)-- ( 0.75, -0.3) ;
\draw [thick, blue] ( -2.75, 0.6) -- (-0.75, 0.3);
\draw [thick, blue](-0.75, 0.3)  arc (75: -75:0.31);
\draw [thick, blue](-2.75, -0.6)-- ( -0.75, -0.3) ;
\draw (2,0.3) node[right]{$\zeta_2 =1$};
\draw (-2,0.3) node[left]{$\zeta_2 =1$};
\draw (-0.66,2.5) node[right]{$\zeta_1 =1$};
\draw (-0.66,-2.50) node[right]{$\zeta_1 =1$};
\draw[->] (-3,0) -- (3,0); 
\draw (3,0) node[right] {$\xi_1$}; 
\draw [->] (0,-3) -- (0,3); 
\draw (0,3) node[above] {$\xi_2$};
\end{tikzpicture}
\end{center}
\caption{The cut-off functions}
\end{figure}
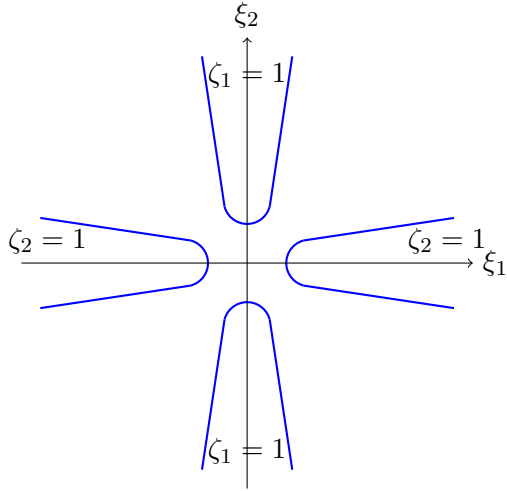
Applying Lemma~\ref{regularite} exchanging the roles of $\mu_1$ and $\mu_2$ (and the roles of the variables $x_1$ and $x_2$), we get by Proposition \ref{compact} that
$$ (1- \zeta_2) (D_x) \mu_2 \in W^{1-\epsilon, 1}( \mathbb{R}^2)\Rightarrow \zeta_1( D_x) \mu_2 = \zeta_1( D_x)  (1- \zeta_2) (D_x)\mu_2 \in W^{1-2\epsilon, 1}( \mathbb{R}^2),$$
and consequently near $x_0$,
$$H_{12}\, \zeta_1( D_x) \mu_2 \in L^1.$$
Finally, to study the second term in the r.h.s. of~\eqref{new}, we apply the symbolic calculus formula \eqref{explicit} which shows that 
$$  [ ^t A_1, \zeta_1( D_x) ] =  -( \nabla_x a_1.\nabla_\xi \zeta_1) (x, D_x) + R, \quad R\in \Psi^{-1}.$$ 
Now $R \mu_1\in W^{1-\epsilon , 1} ( \mathbb{R}^2)$, and since $\zeta_1=1$ on a neighborhood of the support of $\widetilde{\zeta_1}$, we get that $\nabla_\xi \zeta_1$ is supported  where $(1- \widetilde{\zeta_1})=1$ and consequently we have 
$$  (\nabla_x a_1 . \nabla_\xi \zeta_1) (x, D_x) =  (\nabla_x a_1 . \nabla_\xi \zeta_1 (1- \widetilde{\zeta_1}))(x, D_x), $$
therefore Lemma \ref{regularite} with $\tilde \zeta_1$ and Proposition \ref{compact} imply
$$(\nabla_x a_1 . \nabla_\xi \zeta_1) (x, D_x)  \mu_1= ( \nabla_x a_1 . \nabla_\xi \zeta_1) (x, D_x) (1- \widetilde{\zeta_1})(D_x) \mu_1\in W^{1- 2\epsilon,1} ( \mathbb{R}^2).$$
Summarizing, we have obtained from \eqref{new} that in a neighborhood of $x_0$
$$  ^tA_1\,  \zeta_1( D_x) \mu_1 + H_{11}\, \zeta_1( D_x) \mu_1\in L^1( \mathbb{R}^d).$$

We can now revisit the proof of Lemma~\ref{inv1}, with $\mu_0$ replaced by $\zeta_1(D_x) \mu_1$ (wich, according to Lemma~\ref{regularite} is also a measure), with the only modification that we now have an $L^1$ r.h.s. 
We get 
$$ \phi_1(-s,\cdot) ^\sharp \bigl( \zeta_1(D_x)\mu_{1}\bigr) - C_1^{-1} (s,x) \bigl( \zeta_1(D_x)\mu_{1}\bigr)  \in L^1_{\text{loc}},$$ 
Passing to the singular parts (from Lemma~\ref{regularite} $\mu_{1,s}= \zeta_1(D_x)  \mu_1)_s$), we get near $x_0$ (and for small $s$)
$$\phi_1(-s,\cdot) ^\sharp \mu_{1,s}  =  C_1^{-1} (s,x) \mu_{1,s},$$ 
which implies Theorem~\ref{uncouple} (and also~\eqref{uncoupled}) for $\mu_1$  and small $s$. The general case is obtained by iterating in $s$.  The proof for $\mu_2$ is similar.

\end{proof}

%
%
%
%

\section{Pseudodifferential operators}\label{sec.pseudo}
In this section we have gathered basic facts about pseudodifferential operators. We refer to~\cite[Chapter XVIII]{Ho} for a general presentation or~\cite[Section VI.6]{Stein} for a presentation closer to our needs of the pseudodifferential calculus (see also \cite{AlGe}).
Let us recall the definitions of the symbol classes. Here we adopt the following convention:
$$ \forall \alpha \in \mathbb{N}^d, |\alpha | = \sum_{j=1}^d \alpha _j, \quad \alpha ! = \prod_{j=1}^d \alpha_j !,\quad  \partial_x ^\alpha = \frac{\partial^{\alpha_1}} { \partial x_1^ {\alpha_1}} \circ \dots \circ  \frac{\partial^{\alpha_d}} { \partial x_d^ {\alpha^d}}.
$$
We now define the class of symbols of order $k$ by 
\begin{multline}
 S^k( \mathbb{R}^d) =\{a\in C^\infty( \mathbb{R}^{3d});  \\
 \forall \alpha, \beta, \gamma \in \mathbb{N}^d, \sup_{x,y,\xi \in \mathbb{R}^d}  |\partial_x^\alpha \partial_y^\beta \partial_\xi^\gamma a(x, y , \xi)| (1+ |\xi|)^{-k+|\gamma|}=: \|a\|_{k, \alpha, \beta, \gamma}<+\infty\}.
\end{multline}
The constants $\|a\|_{\alpha, \beta, \gamma} $ are called semi-norms of the symbol $a$.
Most of the time, the symbols we shall consider will not depend on the $y$ variable, but it is convenient to allow this dependence.
For simplicity we shall sometimes only consider the subclass $S^k_{\text{cl}}$ of symbols  admiting homogeneous principal symbol i.e. there exists in addition $\chi \in C^\infty_c( \mathbb{R}^d)$ and $\tilde a_k\in C^\infty (\mathbb{R}^d\times \mathbb{S}^d)$ such that:
$$ a - (1-\chi)(\xi) |\xi|^k \tilde a_k (x , \frac \xi{|\xi|}) \in S^{k-1}.$$
The function $a_k(x,\xi):=|\xi|^k\tilde a_k(x , \frac \xi{|\xi|})$ is the principal symbol of $a$. 

To any symbol $a \in S^k_{\text{cl}}(\mathbb{R}^d)$ we can associate an operator on the temperate distributions set $\mathcal{S}'( \mathbb{R}^d)$ by the formula
$$ a(x,y, D_x) u (x)= \text{Op} (a) u(x)= \frac{ 1 }{(2\pi)^d} \int e^{i(x-y) \cdot \xi} a(x, y , \xi) u(y) dy d\xi,$$
where this integral is defined as an oscillatory integral. We shall denote $Op (a) \in \Psi^k_{\text{cl}}( \mathbb{R}^d)$.
Remark that if $a$ do not depend on the variable $x$, we have 
\begin{equation}\label{quantdte}
  \text{Op} (a) u(x) = \mathcal{F}^{-1} \left( \int e^{-iy \cdot \xi} a( y , \xi) u(y) dy d\xi\right)(x).
 \end{equation}
 
An operator $Op(a)$ is said to be elliptic at a point $(x_0, \xi_0)$ if the principal symbol $a_k(x,y, \xi)$ is non zero at the point $(x_0,x_0, \frac{\xi_0} {|\xi_0|})$.  
\begin{definition}
For any $a\in S^{k}( \mathbb{R}^d)$ and any sequence $a_i \in S^{k_i}( \mathbb{R}^d)$ with $k_0 =k > k_1 > \dots $, we write 
$$ a \sim \sum_i a_i \Leftrightarrow \forall M, (a- \sum_{i=0}^N a_i ) \in S^{k_{N+1}}.$$
\end{definition}
The basic properties of symbolic calculus of pseudodifferential operators are summarized in the following 
\begin{prop}\label{basic}
We have the following symbolic calculus properties
\begin{itemize}
\item
For any $a\in S^k_{\text{cl}}(\mathbb{R}^d)$, there exists $\widetilde{a} \in S^k_{\text{cl}}(\mathbb{R}^d)$ {\em not depending on the variable $y$}. (resp. $\widetilde{\widetilde{a}}\in S^k_{\text{cl}}(\mathbb{R}^d)$ {\em not depending on the variable $x$}), such that 
$$Op(a) = Op( \widetilde{a}) , (\text{resp. } Op(a) = Op( \widetilde{\widetilde{a}}),
$$
with 
$$\widetilde{a} (x,\xi) \sim \sum_{N} \sum_{|\alpha |\leq N } \frac{i^{|\alpha|}} { \alpha !} (\partial_y^\alpha \partial_\xi^\alpha a(x,y,\xi)) \mid_{y=x},
$$
$$\widetilde{\widetilde{a}} (y,\xi) \sim \sum_{N} \sum_{|\alpha |\leq N } \frac{(-i)^{|\alpha|}} { \alpha !} (\partial_x^\alpha \partial_\xi^\alpha a(x,y,\xi)) \mid_{x=y}.
$$
In particular, 
$$\tilde a_k(x,\xi)=a(x,x,\xi),\quad \widetilde{\widetilde{a}}_k (y,\xi)=a(y,y,\xi),$$
and $Op(a)$ is elliptic at a point $(x_0, \xi_0)$ if and only if $Op(\tilde a)$ is elliptic at $(x_0, \xi_0)$ in the sense that $\tilde a_k(x_0,\xi_0)\neq 0$ (and similarly for the right quantization).
\item
The formal $L^2$ adjoint of a pseudodifferential operator, $Op(a)$, is the pseudodifferential operator $Op(a^*)$, with 
$$ a^*(x,y,\xi) = \overline{a(y,x,\xi)}.$$
\item
For any $a\in S^k_{\text{cl}}( \mathbb{R}^d), b\in S^{\tilde k}( \mathbb{R}^d)$, there exists $c \in S^{k+\tilde k}( \mathbb{R}^d)$ such that
$$ Op(a) \circ Op(b) = Op(c).$$
Furthermore, if the symbols $a$, $b$ and $c$ depend only on the $x$ variable (by the previous results we can reduce the analysis to this case), we have
\begin{equation}\label{compo}
 c= a\sharp b \sim \sum_{N} \sum_{|\alpha| =N} \frac{i^{|\alpha|}}{ \alpha !} \partial_\xi^\alpha (a(x, \xi)) \partial_x^\alpha (b(x, \xi)).
\end{equation}\end{itemize}
\end{prop}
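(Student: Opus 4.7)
The plan is to derive all three claims from a single common mechanism: Taylor expansion in the spatial variables combined with integration by parts in the frequency variable, exploiting the identity
$$ (y-x)^\alpha\, e^{i(x-y)\cdot\xi} = i^{|\alpha|}\, \partial_\xi^\alpha e^{i(x-y)\cdot\xi}.$$
The asymptotic expansions (the $\sim$ sign) are then turned into genuine identities modulo a smoothing operator by a Borel-type resummation on the resulting series of symbols of decreasing order.

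For the first assertion I would fix $a\in S^k(\mathbb{R}^d)$ (depending on $(x,y,\xi)$) and Taylor-expand in $y$ around $y=x$:
$$a(x,y,\xi)=\sum_{|\alpha|<N}\frac{(y-x)^\alpha}{\alpha!}\,\partial_y^\alpha a(x,x,\xi) + r_N(x,y,\xi),$$
with $r_N$ in Taylor integral-remainder form. Plugging this into the oscillatory integral defining $Op(a)u$ and using the above identity to turn each factor $(y-x)^\alpha$ into $i^{|\alpha|}\partial_\xi^\alpha$ acting on the phase, an integration by parts in $\xi$ produces
$$Op(a) = \sum_{|\alpha|<N} Op\!\left(\frac{i^{|\alpha|}}{\alpha!}\,(\partial_y^\alpha \partial_\xi^\alpha a)(x,x,\xi)\right) + R_N,$$
where $R_N \in \Psi^{k-N}$. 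Summing a Borel realization of the resulting series yields $\widetilde a \in S^k_{\text{cl}}$ (independent of $y$) with $Op(a) = Op(\widetilde a)$. The symmetric statement for the right quantization $\widetilde{\widetilde a}$ is proved identically by Taylor-expanding in $x$ around $x=y$. Reading off the $|\alpha|=0$ terms gives $\widetilde a_k(x,\xi)=a(x,x,\xi)$ and $\widetilde{\widetilde a}_k(y,\xi)=a(y,y,\xi)$, from which the invariance of the ellipticity condition at $(x_0,\xi_0)$ is immediate.

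The adjoint claim is the simplest: directly from the oscillatory-integral expression,
$$\langle Op(a)u,v\rangle = \frac{1}{(2\pi)^d}\int\!\!\int\!\!\int e^{i(x-y)\cdot\xi}\,a(x,y,\xi)\,u(y)\,\overline{v(x)}\,dx\,dy\,d\xi,$$
and interchanging the names of $x$ and $y$ recovers $\langle u, Op(a^*)v\rangle$ with $a^*(x,y,\xi)=\overline{a(y,x,\xi)}$. For the composition formula I reduce, by the first part, to $a=a(x,\xi)$ and $b=b(x,\xi)$ of pure left type, and write
$$Op(a)Op(b)u(x)=\frac{1}{(2\pi)^{2d}}\int e^{i(x-y)\cdot\xi+i(y-z)\cdot\eta}\,a(x,\xi)\,b(y,\eta)\,u(z)\,dy\,dz\,d\xi\,d\eta.$$
This is the quantization of the amplitude $c(x,y,\xi,\eta)=a(x,\xi)b(y,\eta)$ with two phase variables; a stationary-phase/Fourier-inversion argument in $(y,\eta)$ (the phase is stationary on $\eta=\xi$) or, equivalently, applying the reduction to left quantization to the symbol $c(x,y,\xi) := a(x,\xi)b(y,\xi)$, gives~\eqref{compo} upon observing that $(\partial_y^\alpha \partial_\xi^\alpha c)(x,x,\xi) = (\partial_\xi^\alpha a)(x,\xi)\,(\partial_x^\alpha b)(x,\xi)$.

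The main technical obstacle is controlling the remainder terms $R_N$ and the oscillatory integrals throughout. After integration by parts the remainders are not absolutely convergent and must be given meaning through further non-stationary-phase integration by parts in the variable $y-x$ (with a partition of unity in frequency to separate low and high frequencies), yielding the required gain $k-N$ derivatives in $\xi$. Once this is in hand, turning the formal asymptotic series into an actual symbol via Borel summation is standard. These are classical manipulations (see e.g.~\cite[Chapter XVIII]{Ho} or~\cite[Section VI.6]{Stein}), and the plan is to invoke them rather than reproduce them in full.
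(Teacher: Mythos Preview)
The paper does not actually prove this proposition: it is stated as a summary of classical pseudodifferential calculus, with proofs delegated to \cite[Chapter~XVIII]{Ho} and \cite[Section~VI.6]{Stein} (and \cite{AlGe}). Your sketch is precisely the standard argument carried out in those references (Taylor expansion of the amplitude, integration by parts via $(y-x)^\alpha e^{i(x-y)\cdot\xi}=i^{|\alpha|}\partial_\xi^\alpha e^{i(x-y)\cdot\xi}$, remainder control by non-stationary phase, Borel resummation), so in substance you and the paper agree: cite the literature.

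There is, however, one genuine slip in your treatment of the composition. The amplitude $c(x,y,\xi)=a(x,\xi)\,b(y,\xi)$ does \emph{not} represent $Op(a)\circ Op(b)$ when both factors are in left quantization; a short computation shows it represents $Op(a)\circ Op_R(b)$, with $b$ used as a \emph{right} symbol. Correspondingly the identity you assert,
\[
(\partial_y^\alpha\partial_\xi^\alpha c)(x,x,\xi)\;=\;\partial_\xi^\alpha a(x,\xi)\cdot\partial_x^\alpha b(x,\xi),
\]
is false in general, because both $a$ and $b$ depend on $\xi$ and the Leibniz rule produces cross terms. The correct exact formula (which the paper records in the Remark immediately following the proposition) involves two frequency variables,
\[
a\sharp b(x,\xi)=\frac{1}{(2\pi)^d}\int e^{i(x-y)\cdot(\eta-\xi)}\,a(x,\eta)\,b(y,\xi)\,dy\,d\eta,
\]
and expanding this by stationary phase in $(y,\eta)$---your first suggested route---yields \eqref{compo} correctly. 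So your plan stands once you drop the ``equivalently'' shortcut and keep only the stationary-phase derivation.
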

\begin{rem}
From the explicit formula
$$a\sharp b (x, \xi) = \frac 1 {(2\pi)^d} \int e^{i(x-y)\cdot (\eta-\xi)} a(x, \eta) b(y, \xi) dy d\eta,
$$ 
we can actually get a quantitative version of~\eqref{compo}. Namely, for any $N_0$, each $S^{k+\tilde k-N_0-1}(\mathbb R^d)$ semi-norm of
\begin{equation}\label{explicit}
a\sharp b- \sum_{N\leq N_0}\sum_{|\alpha| =N} \frac{i^{|\alpha|}}{ \alpha !} \partial_\xi^\alpha (a(x, \xi)) \partial_x^\alpha (b(x, \xi)) 
\end{equation}
is bounded by a product of a finite number of semi-norms of $a$ and $b$.
\end{rem}
 Recal that the Sobolev space $W^{s,p} ( \mathbb{R}^d)$ is defined by 
$$ W^{s,p} ( \mathbb{R}^d)= \{ u \in \mathcal{S}'( \mathbb{R}^d); (1- \Delta )^{s/2} u \in L^p( \mathbb{R}^d)\} ,$$
and that we have the following property.
\begin{prop}\label{compactness}
For any $1\leq p \leq +\infty$ and any $s>0$, $\chi\in C^\infty_0 ( \mathbb{R}^d)$, the application
$$ u \in W^{s,p} ( \mathbb{R}^d) \mapsto \chi u \in L^p (\mathbb{R}^d)
$$ is compact. 
\end{prop}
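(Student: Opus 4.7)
The plan is to apply the Fréchet--Kolmogorov characterization of precompact subsets of $L^p(\mathbb{R}^d)$ for $1 \leq p < +\infty$ (and Arzelà--Ascoli in $C^0$ for $p = +\infty$). Given a bounded sequence $\{u_n\}$ in $W^{s,p}(\mathbb{R}^d)$, I want to extract a subsequence of $\{\chi u_n\}$ converging in $L^p$. Since $\chi u_n$ is supported in the fixed compact set $K = \mbox{supp}(\chi)$, the tightness at infinity required by Fréchet--Kolmogorov is automatic; since multiplication by $\chi$ maps $W^{s,p}$ continuously into $L^p$, boundedness in $L^p$ is clear.

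The main step is to establish uniform $L^p$-equicontinuity,
$$ \lim_{|h| \to 0} \sup_n \|\tau_h(\chi u_n) - \chi u_n\|_{L^p} = 0, \qquad \tau_h v(x) := v(x-h). $$
I would write $u_n = G_s * f_n$ where $G_s$ is the Bessel potential of order $s$ and $f_n = (1-\Delta)^{s/2} u_n$ is uniformly bounded in $L^p$. For $s>0$, $G_s \in L^1(\mathbb{R}^d)$ (it is smooth away from $0$, exponentially decaying at infinity, with an integrable singularity of order $|x|^{s-d}$ at the origin), so by continuity of translations in $L^1$ combined with Young's inequality,
$$\|\tau_h u_n - u_n\|_{L^p} \leq \|\tau_h G_s - G_s\|_{L^1}\, \|f_n\|_{L^p} \longrightarrow 0 $$
uniformly in $n$ as $|h|\to 0$. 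The Leibniz-type decomposition
$$\tau_h(\chi u_n) - \chi u_n = (\tau_h \chi - \chi)\,\tau_h u_n + \chi\,(\tau_h u_n - u_n),$$
together with the $L^\infty$-uniform continuity of $\chi$, then yields the claimed equicontinuity. For $p=+\infty$, the same estimate shows that $\{\chi u_n\}$ is equicontinuous and uniformly bounded on $K$, and Arzelà--Ascoli provides a uniformly (hence $L^\infty$-) convergent subsequence.

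The most delicate technical ingredient is the $L^1$ bound on the Bessel kernel $G_s$, which is classical (see e.g.\ Stein's \emph{Singular integrals}) but crucial for the translation estimate to be uniform in $n$. An alternative route that avoids Bessel kernels entirely would be to fix a mollifier $\rho_\varepsilon$, decompose $\chi u_n = \rho_\varepsilon * (\chi u_n) + (\chi u_n - \rho_\varepsilon * (\chi u_n))$, control the remainder uniformly in $n$ by the $W^{s,p}$ bound (using a fractional Taylor estimate or the $W^{s,p}$ characterization via differences), and apply Arzelà--Ascoli to the smoothed, equicontinuous family $\rho_\varepsilon * (\chi u_n)$ for each fixed $\varepsilon$, concluding by a diagonal extraction as $\varepsilon \to 0$.
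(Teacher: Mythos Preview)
Your proof is correct. The paper itself does not supply a proof of this proposition: it is stated as a standard fact (``and that we have the following property'') and used without further justification. Your argument via the Fr\'echet--Kolmogorov criterion (and Arzel\`a--Ascoli for $p=+\infty$), with the uniform translation estimate coming from $u_n = G_s * f_n$ and $G_s \in L^1$, is a clean and complete way to fill in this gap.
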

Pseudodifferential operators are bounded on $L^p$, and more generally on $W^{s,p}$, {\em for $1<p<+\infty$}. More precisely, we have the following result.
\begin{prop}[see e.g. \protect{\cite[Section VI.5.2]{Stein}} ]\label{conti}
Let $A = Op(a) \in \Psi^0_{\text{cl}}$. Then, for all $1< p < +\infty$, $s\in \mathbb{R}$, the operator $A$ is continuous on $W^{s,p}( \mathbb{R}^d)$. Furthermore, its norm is bounded by a finite number of semi-norms
$$ \exists N(d); \| Op(a) \|_{\mathcal{L}(W^{s,p} ( \mathbb{R}^d))} \leq C \sup_{|\alpha| + |\beta|+ |\gamma|\leq N(d)}  \|a\|_{0, \alpha, \beta, \gamma}.$$
\end{prop}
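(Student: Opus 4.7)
The plan is to proceed in three standard stages: reduce to $s=0$, prove $L^2$ boundedness, then extend to $L^p$ via Calder\'on--Zygmund theory, keeping track throughout of the dependence of constants on semi-norms of $a$.

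First I would reduce to $s=0$. The operator $(1-\Delta)^{s/2}$ is in $\Psi^s_{\text{cl}}$ with symbol $(1+|\xi|^2)^{s/2}$, and by Proposition~\ref{basic} the composition $B_s := (1-\Delta)^{s/2} \circ A \circ (1-\Delta)^{-s/2}$ is a pseudodifferential operator in $\Psi^0_{\text{cl}}$ whose symbol semi-norms are controlled by finitely many semi-norms of $a$. Since $(1-\Delta)^{s/2}$ is an isometry from $W^{s,p}$ to $L^p$, the $W^{s,p}$ boundedness of $A$ is equivalent to the $L^p$ boundedness of $B_s$, with the same control on the norm.

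Next I would establish $L^2$ boundedness of a general $A \in \Psi^0_{\text{cl}}$. The cleanest route is a $T^*T$ iteration: by Proposition~\ref{basic}, $A^*A \in \Psi^0_{\text{cl}}$ with symbol semi-norms controlled by those of $a$; iterating, $(A^*A)^{2^k}$ is also in $\Psi^0_{\text{cl}}$ with controlled symbol. On the other hand, for a symbol of sufficiently negative order (obtained after subtracting the leading part and enough lower-order corrections, or alternatively, by using a direct Schur-test argument on the kernel), boundedness on $L^2$ follows by applying the Schur test to the Schwartz kernel. Combining this with $\|A\|_{\mathcal{L}(L^2)}^{2^{k+1}} = \|(A^*A)^{2^k}\|_{\mathcal{L}(L^2)}$ gives the desired bound in terms of finitely many semi-norms.

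Third, for general $1<p<\infty$, I would apply Calder\'on--Zygmund theory. The Schwartz kernel of $A$ (after reducing to a symbol depending only on $(x,\xi)$ via Proposition~\ref{basic}) is
\[ K(x,y) = \frac{1}{(2\pi)^d} \int e^{i(x-y)\cdot \xi}\, a(x,\xi)\, d\xi, \]
interpreted as an oscillatory integral. Standard integration by parts in $\xi$, using the identity $(1 - \Delta_\xi) e^{i(x-y)\cdot\xi} = (1+|x-y|^2) e^{i(x-y)\cdot\xi}$ combined with a dyadic decomposition in frequency, shows that $K$ is smooth off the diagonal and that
\[ |\partial_x^\alpha \partial_y^\beta K(x,y)| \leq C_{\alpha,\beta} |x-y|^{-d-|\alpha|-|\beta|}, \]
where each $C_{\alpha,\beta}$ depends only on finitely many semi-norms of $a$. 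In particular $K$ satisfies the standard H\"ormander kernel condition. Together with the $L^2$ bound, the Calder\'on--Zygmund theorem yields a weak-type $(1,1)$ estimate, hence $L^p$ boundedness for $1 < p \leq 2$ by Marcinkiewicz interpolation. The range $2 < p < \infty$ follows by duality, since $A^* \in \Psi^0_{\text{cl}}$ with controlled semi-norms.

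The main technical obstacle is the $L^2$ boundedness step, which is the only non-formal input: the rest is symbolic calculus and the Calder\'on--Zygmund machinery. A minor but essential point is to verify at every step that each constant involves only a fixed finite number of semi-norms of $a$, determined by the dimension $d$ and the order of derivatives required in the kernel integration-by-parts argument; this is why the exponent $N(d)$ in the statement depends only on $d$.
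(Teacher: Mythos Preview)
The paper does not give its own proof of this proposition: it is stated as a known result with a reference to \cite[Section VI.5.2]{Stein}, and the text moves on immediately afterward. Your outline is the standard textbook argument (reduction to $s=0$ by conjugation with $(1-\Delta)^{s/2}$, $L^2$ boundedness, then Calder\'on--Zygmund theory and duality), and is essentially what one finds in the cited reference; it is correct, with the caveat that your description of the $L^2$ step via $T^*T$ iteration is somewhat sketchy and would need to be fleshed out (the usual route in Stein is a direct Cotlar--Stein almost-orthogonality argument or an elementary kernel estimate after reducing the symbol order), but the overall strategy is sound.
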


 In general, pseudodifferential operators are not continuous on $L^1$ and $L^\infty$, the basic example being the Hilbert transform (smoothed out near $\xi =0$), associated to the symbol
$$a(x,y,\xi) = \chi(\xi), \chi \in C^\infty( \mathbb{R}), \chi \mid_{(-\infty,-1) } =0, \chi \mid_{(1, +\infty) } =1.$$
We also have the following counter example (see the introduction). Let $\chi\in C^\infty_0( \mathbb{R}^2)$ equal to $1$ near $0$, and 
\begin{multline}
u(x):= \log\log( e{|x|^{-1}})\in W^{1,1}(B_1), w: = \chi u \in W^{1,1}_{comp}(\mathbb R^2)\\
\Rightarrow 
(- \Delta +1) w=\frac{\chi(x)} {|x|^2\log^2( e{|x|^{-1}})} + [\chi,  \Delta] u+w = f   \in L^1_{comp}(B_1),\quad D^2 w \notin L^1_{loc}.\end{multline}
We thus get 
\begin{equation}\label{contre-ex}
\frac{ D^2} {(- \Delta +1)^{-1}} f \notin L^1_{loc}, \qquad f \in L^1_{comp}.
\end{equation}   
However,  by the dual estimate of the Lemma in~\cite[Section VI.5.3.1]{Stein}, we have the following weaker result (we shall use only the case $p = 1$).
 \begin{prop}\label{compact}
Let $\delta \in \mathbb{R}, \epsilon >0$ and $A = Op(a) \in \Psi^{\delta}$. Then for all $1\leq p \leq +\infty, s\in \mathbb{R} $ the operator $A$ is continuous from  $W^{s,p}( \mathbb{R}^d)$ to $W^{s- \delta- \epsilon ,p}( \mathbb{R}^d)$. Furthermore, its norm is bounded by a finite number of semi-norms
$$ \exists N(d, \epsilon),\, \| Op(a) \|_{\mathcal{L}(W^{s,p}( \mathbb{R}^d),W^{s- \delta- \epsilon ,p}( \mathbb{R}^d)} \leq C \sup_{|\alpha| + |\beta|+ |\gamma|\leq N(d)}  \|a\|_{\delta, \alpha, \beta, \gamma}.$$
\end{prop}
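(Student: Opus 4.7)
The plan is to reduce everything to the $L^p$-boundedness of a pseudodifferential operator of strictly negative order, and then to estimate the Schwartz kernel directly via stationary phase / integration by parts.

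By the very definition of $W^{s,p}(\mathbb{R}^d)$ given before the statement, the maps $(1-\Delta)^{t/2}: W^{s,p}\to W^{s-t,p}$ are isometric isomorphisms for every $1\leq p\leq +\infty$ and every $s,t\in \mathbb{R}$. Hence the continuity $A: W^{s,p}\to W^{s-\delta-\epsilon,p}$ is equivalent to the $L^p$-continuity of
$$\widetilde{A}:=(1-\Delta)^{(s-\delta-\epsilon)/2}\circ A\circ (1-\Delta)^{-s/2}.$$
By Proposition~\ref{basic}, $\widetilde{A}\in \Psi^{-\epsilon}$, and a finite number of its semi-norms in $S^{-\epsilon}$ is controlled by a finite number of semi-norms of $a$ in $S^\delta$. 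So the problem reduces to proving that every $\widetilde{A}=\mathrm{Op}(\widetilde{a})\in \Psi^{-\epsilon}$ is $L^p$-continuous for $p\in\{1,+\infty\}$, with a bound depending only on a finite number of semi-norms of $\widetilde{a}$ (the range $1<p<+\infty$ is already covered by Proposition~\ref{conti} applied to the order $-\epsilon\leq 0$ symbol).

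For the endpoints, I would apply Schur's test to the Schwartz kernel
$$K(x,y)=\frac{1}{(2\pi)^d}\int_{\mathbb{R}^d} e^{i(x-y)\cdot \xi}\,\widetilde{a}(x,y,\xi)\,d\xi,$$
interpreted as an oscillatory integral. Two complementary estimates are needed. Near the diagonal, for $0<|x-y|\leq 1$, one splits the integral over $\{|\xi|\leq |x-y|^{-1}\}$ and $\{|\xi|\geq |x-y|^{-1}\}$; the first piece is estimated trivially using $|\widetilde{a}|\lesssim (1+|\xi|)^{-\epsilon}$, and the second is treated by iterating the identity $e^{i(x-y)\cdot \xi}=|x-y|^{-2}(-\Delta_\xi) e^{i(x-y)\cdot \xi}$ and integrating by parts $N$ times with $N$ large enough to produce an integrable decay in $\xi$. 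This yields $|K(x,y)|\leq C\,|x-y|^{-d+\epsilon}$. Off the diagonal, for $|x-y|\geq 1$, the same integration-by-parts argument (applied an arbitrary number of times) yields $|K(x,y)|\leq C_N |x-y|^{-N}$ for every $N\in \mathbb{N}$. In both cases, the constants $C$ and $C_N$ depend only on finitely many semi-norms of $\widetilde{a}$.

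Combining the two bounds and integrating gives $\sup_x\int |K(x,y)|\,dy+\sup_y\int |K(x,y)|\,dx<\infty$, with a bound controlled by a finite family of semi-norms of $\widetilde a$. Schur's test then provides the $L^1$- and $L^\infty$-continuity of $\widetilde A$ with the desired quantitative semi-norm control, and undoing the initial reduction proves Proposition~\ref{compact}. The main technical point is the sharpness of the near-diagonal bound: one must exploit the $-\epsilon$ gain from the symbol estimate on $\widetilde a$ to produce the locally integrable singularity $|x-y|^{-d+\epsilon}$, since the merely order-$0$ bound would give $|x-y|^{-d}$, which is not integrable and would not be enough for Schur's test.
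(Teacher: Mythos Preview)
Your argument is correct. The paper itself gives no proof of this proposition, merely citing ``the dual estimate of the Lemma in~\cite[Section VI.5.3.1]{Stein}'', and your reduction to $\Psi^{-\epsilon}$ followed by the kernel bound $|K(x,y)|\lesssim |x-y|^{-d+\epsilon}$ (plus rapid off-diagonal decay) and Schur's test is precisely the content of that reference, so your proof fills in exactly what the paper outsources.
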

\begin{corol} \label{coro}
Let $\delta \in \mathbb{R}, \eta >0$ and $A = Op(a) \in \Psi^{\delta}$. Then, for all $s\in \mathbb{R} $, the operator $A$ is continuous from  $\mathcal{M}_0( \mathbb{R}^d)$ to $W^{- \delta- \eta ,1}( \mathbb{R}^d)$. Furthermore, its norm is bounded by a finite number of semi-norms
$$ \exists N(d, \eta),\, \| Op(a) \|_{\mathcal{L}(\mathcal{M}_0( \mathbb{R}^d),W^{- \delta- \eta ,1}( \mathbb{R}^d)} \leq C \sup_{|\alpha| + |\beta|+ |\gamma|\leq N(d)}  \|a\|_{\delta, \alpha, \beta, \gamma}.$$
\end{corol}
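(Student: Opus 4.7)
The plan is to reduce Corollary \ref{coro} to Proposition \ref{compact} by showing that finite Radon measures sit inside a negative‐order $L^1$ Sobolev space with an arbitrarily small loss of regularity. The key ingredient is the classical fact that the Bessel potential kernel $G_\sigma := \mathcal{F}^{-1}\bigl((1+|\xi|^2)^{-\sigma/2}\bigr)$ is a nonnegative $L^1(\mathbb{R}^d)$ function as soon as $\sigma>0$ (see e.g.\ \cite{Stein}). Convolution with $G_\sigma$ is exactly $(1-\Delta)^{-\sigma/2}$, so for every $\mu \in \mathcal{M}_0( \mathbb{R}^d)$ one has
$$ \| (1-\Delta)^{-\sigma/2}\mu\|_{L^1} = \|G_\sigma * \mu\|_{L^1} \leq \|G_\sigma\|_{L^1}\,\|\mu\|_{\mathcal{M}_0}, $$
which exactly says $\mathcal{M}_0 \hookrightarrow W^{-\sigma,1}(\mathbb{R}^d)$ continuously, for every $\sigma > 0$.

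Next, I would split the regularity loss as $\eta = \sigma + \varepsilon$ with $\sigma,\varepsilon > 0$ (e.g.\ $\sigma=\varepsilon = \eta/2$), and apply Proposition \ref{compact} to the given operator $A \in \Psi^{\delta}$ with parameters $p=1$, input Sobolev exponent $-\sigma$, and loss $\varepsilon$: this yields continuity
$$ A : W^{-\sigma,1}(\mathbb{R}^d) \longrightarrow W^{-\sigma-\delta-\varepsilon,1}(\mathbb{R}^d) = W^{-\delta-\eta,1}(\mathbb{R}^d), $$
with operator norm controlled by a finite number of semi-norms $\|a\|_{\delta,\alpha,\beta,\gamma}$ (with the number of semi-norms depending on $d$ and $\eta$ only, through $\varepsilon$).

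Composing the embedding $\mathcal{M}_0 \hookrightarrow W^{-\sigma,1}$ with this continuity statement gives
$$ \|A\mu\|_{W^{-\delta-\eta,1}} \leq C\,\|G_\sigma\|_{L^1}\!\!\!\sup_{|\alpha|+|\beta|+|\gamma|\leq N(d,\eta)} \!\!\!\|a\|_{\delta,\alpha,\beta,\gamma}\,\|\mu\|_{\mathcal{M}_0}, $$
which is exactly the claim of the corollary (the reference to $s \in \mathbb{R}$ in the statement is vacuous, as the conclusion does not involve $s$). There is no real obstacle in this argument; the only nontrivial input is the $L^1$-integrability of $G_\sigma$ for $\sigma>0$, which is classical, so the corollary is a direct combination of this embedding with the $L^1$-estimate of Proposition \ref{compact} (the latter being the only delicate step in the whole chain, since $0$-th order operators fail to be $L^1$-bounded without the $\varepsilon$-loss).
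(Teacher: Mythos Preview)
Your proof is correct and follows essentially the same route as the paper: embed $\mathcal{M}_0$ into $W^{-\eta/2,1}$ and then apply Proposition~\ref{compact} with $p=1$, $s=-\eta/2$, $\epsilon=\eta/2$. The only cosmetic difference is in how the embedding $\mathcal{M}_0\hookrightarrow W^{-\eta/2,1}$ is justified: the paper obtains it by duality from the inclusion $W^{\eta/2,\infty}\subset C^0$, whereas you use directly that the Bessel kernel $G_\sigma\in L^1$ for $\sigma>0$ and that convolution of an $L^1$ function with a finite measure lands in $L^1$; these two justifications are equivalent (the paper's inclusion $W^{\eta/2,\infty}\subset C^0$ is itself a consequence of $G_{\eta/2}\in L^1$).
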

Indeed, from the continuous  inclusion for all $\eta >0$,
$$ W^{\eta/2, \infty} ( \mathbb{R}^d) \subset C^0( \mathbb{R}^d),$$
we deduce by duality the continuous inclusion 
$$ \mathcal{M}_0 \subset W^{-\eta/2, 1} ( \mathbb{R}^d), $$
and Corollary~\ref{coro} follows from Proposition~\ref{compact} with $s= - \eta /2$ and $\epsilon = \eta/2$.

We end this section with a result involving weights and bounded measures. 
 \begin{prop}\label{compactbis}
Let $\delta  \in \mathbb{R}$ $\epsilon>0$, $A\in \Psi^{\delta }$ and $\chi  \in C^\infty_0 ( \mathbb{R}^d)$. Then, for any $k \in \mathbb{R}^+$,  the operator 
$$ \chi A   (1+ |x| ^k) $$ is continuous from $ \mathcal{M}_0 ( \mathbb{R}^d) $ to $W^{-\delta- \epsilon, 1} ( \mathbb{R}^d)$.  Furthermore, its norm is bounded by a finite number of semi-norms of $a$:
$$
 \exists N(d, \epsilon),\, \| \chi A  (1+ |x| ^k)  \|_{\mathcal{L}(\mathcal{M}_0, W^{-\delta,1}( \mathbb{R}^d))} 
 \leq C (1+ \sup_{|\alpha|+ |\beta|+ |\gamma| \leq N(d)}  \|a\|_{\delta, \alpha, \beta, \gamma}).
$$
\end{prop}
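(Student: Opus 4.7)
The plan is to exploit the pseudo-local character of pseudodifferential operators: although $A$ is not local, its distributional kernel $K(x,y)$ is smooth off the diagonal and decays faster than any power of $|x-y|$ as $|x-y|\to\infty$, with bounds depending only on a finite number of semi-norms of $a$. Since $\chi$ is compactly supported, this rapid off-diagonal decay will compensate for the polynomial growth of $(1+|y|^k)$.

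Concretely, I would fix $\phi\in C^\infty_0(\mathbb{R}^d)$ equal to $1$ on a neighborhood of $\operatorname{supp}\chi$, and split
\begin{equation*}
\chi A (1+|y|^k) = \chi A \bigl(\phi(y)(1+|y|^k)\bigr) + \chi A \bigl((1-\phi(y))(1+|y|^k)\bigr) = T_1 + T_2.
\end{equation*}
For $T_1$, multiplication by $\phi(y)(1+|y|^k)\in C^\infty_0$ is bounded on $\mathcal{M}_0$ with norm controlled by $\|\phi(1+|\cdot|^k)\|_{L^\infty}$, and then Corollary~\ref{coro} gives $\chi A:\mathcal{M}_0\to W^{-\delta-\epsilon,1}$ with norm bounded by finitely many semi-norms of $a$. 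So $T_1$ has the desired mapping property with the required semi-norm control.

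The main work is $T_2$, whose distributional kernel is $\kappa(x,y):=\chi(x)K(x,y)(1-\phi(y))(1+|y|^k)$, supported in the set where $|x-y|\geq \operatorname{dist}(\operatorname{supp}\chi,\operatorname{supp}(1-\phi))=:\delta_0>0$. Standard integration by parts in the oscillatory integral $K(x,y) = (2\pi)^{-d}\int e^{i(x-y)\xi}a(x,y,\xi)d\xi$ using the identity $(1-\Delta_\xi)^M e^{i(x-y)\xi} = (1+|x-y|^2)^M e^{i(x-y)\xi}$ shows that for any $N$ and any multi-indices $\alpha,\beta$,
\begin{equation*}
|\partial_x^\alpha\partial_y^\beta K(x,y)| \leq C_{N,\alpha,\beta}\,|x-y|^{-N}, \qquad |x-y|\geq 1,
\end{equation*}
with $C_{N,\alpha,\beta}$ bounded by a finite number of semi-norms of $a$. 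On the support of $\kappa$ we have $|x|\leq R_0$ (where $\operatorname{supp}\chi\subset B(0,R_0)$), so $|y|\leq |x-y|+R_0\leq 2|x-y|$ once $|x-y|\geq R_0$; taking $N$ larger than $k+d+1$, the factor $(1+|y|^k)$ is absorbed and $\kappa$ together with all its derivatives is Schwartz in $y$, uniformly for $x$ in $\operatorname{supp}\chi$. The operator $T_2$ therefore maps bounded measures into $C^\infty_0(\mathbb{R}^d)$ by the explicit formula $T_2\mu(x)=\int \kappa(x,y)\,d\mu(y)$, with
\begin{equation*}
\|T_2\mu\|_{W^{-\delta-\epsilon,1}}\leq \|T_2\mu\|_{L^\infty}\cdot |\operatorname{supp}\chi|\leq C\|\mu\|_{\mathcal{M}_0}\sup_{x}\int|\kappa(x,y)|\,dy,
\end{equation*}
the last supremum being controlled by finitely many semi-norms of $a$.

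The main (mild) obstacle is bookkeeping the dependence on semi-norms in the off-diagonal kernel estimates; once the integration-by-parts argument is tracked quantitatively, combining the two contributions yields the stated bound. Adding the two estimates for $T_1$ and $T_2$ gives the conclusion of Proposition~\ref{compactbis}.
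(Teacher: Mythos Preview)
Your approach is correct and takes a somewhat different route from the paper's. The paper uses a dyadic partition of unity $1=\sum_{p\geq 0}\phi_p(y)$ in the $y$-variable, writes $\chi A(1+|x|^k)=\sum_p \chi A(1+|x|^k)\phi_p$, and then shows via $N$-fold integration by parts in $\xi$ (with the operator $L=\frac{i(x-y)\cdot\nabla_\xi}{|x-y|^2}$) that each summand is itself a pseudodifferential operator with symbol in $S^{\delta-N}$ and semi-norms bounded by $C\,2^{p(k-N)}$; choosing $N>k$ and invoking Corollary~\ref{coro} term by term gives a summable series of operator norms. Your near/far split accomplishes essentially the same thing more directly: the near part $T_1$ is the paper's $p=0$ term, and your off-diagonal kernel estimate replaces the dyadic sum by a single integral bound. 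The paper's route has the advantage that every piece stays inside the pseudodifferential calculus, so Corollary~\ref{coro} applies uniformly and the target space $W^{-\delta-\epsilon,1}$ falls out automatically for either sign of $-\delta-\epsilon$; your route is arguably more elementary for the far part.

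One small gap in your write-up: the displayed bound $\|T_2\mu\|_{W^{-\delta-\epsilon,1}}\leq \|T_2\mu\|_{L^\infty}\cdot|\operatorname{supp}\chi|$ uses the embedding $L^1\hookrightarrow W^{-\delta-\epsilon,1}$, which is only valid when $-\delta-\epsilon\leq 0$. In the paper's main application (Section~\ref{sectthWFL1}, Step~2) the relevant operator has $\delta=-1$, so the target is $W^{1-\epsilon,1}$ with positive exponent, and an $L^\infty$ bound alone is not enough. You have already observed that all $x$-derivatives of $\kappa$ obey the same decay estimates, so $\partial_x^\alpha T_2\mu$ is bounded and supported in $\operatorname{supp}\chi$ for every $\alpha$; bounding $\|T_2\mu\|_{W^{m,1}}$ for an integer $m\geq -\delta-\epsilon$ via these derivative estimates closes the argument with the same semi-norm control.
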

\begin{proof}
With respect to Corollary~\ref{coro}, the only new point is the presence of the weight and of the cut-off. 
We use a dyadic partition of unity 
$$ 1 =\sum_{p\geq 0} \phi_p (x) , \, \phi_0 \in C^\infty _ 0 (\mathbb{R}^d), \,\forall p \geq 1, \phi_p (x) =  \phi( 2^{-p} x), \,\phi \in C^\infty_0 ( \{ \frac 1 2 < \| x\| <2 \} ),$$
 and write
 $$ \chi  A  (1+ |x| ^k) = \sum_{p\geq 0}  \chi  A  (1+ |x| ^k) \phi_p (x).$$
 According to Corollary~\ref{coro} each term is bounded from $ \mathcal{M}_0 ( \mathbb{R}^d) $ to $W^{-\delta- \epsilon, 1} ( \mathbb{R}^d)$, and we just have to check that the series of the norms is summable. Consider $K_p(x,y) $ the kernel of the operator   $\chi  A  (1+ |x| ^k) \phi_p (x)$:
\begin{equation}\label{kernel}
K_p(x,y)=  \frac 1 {(2\pi)^d} \int e^{i(x-y) \cdot \xi }  a(x,y,\xi)   d\xi \,\chi(x)(1+ |y| ^k)\phi_p(y).
 \end{equation}
Remark that on the support of this kernel, $\| x\| \leq C, \| y \| \sim 2^{p-1},$ and consequently, for $k$ large enough, $\| x-y \| \sim 2^{p}$. Integrating by parts $N$ times  in~\eqref{kernel}
using the identity
$$ L(e^{i(x-y) \cdot \xi} )= - e^{i(x-y) \cdot \xi}, \qquad  L= \frac {i (x-y) \cdot \nabla_\xi} {\| x-y\|^2} ,$$
we get
\begin{equation}\label{kernelbis}
K_p(x,y)=  \frac 1 {(2\pi)^d} \int e^{i(x-y) \cdot \xi } L^N (a(x,y,\xi) ) d\xi\, \chi(x)  (1+ |y| ^k)\phi_p(y),
 \end{equation}
and consequently 
$$ \chi  A  (1+ |x| ^k) \phi_p (x) = \text{Op} (a_{N,p}),\, a_{N,p}=  L^N \bigl(a(x,y,\xi) ) \chi(x) \phi_p(y) (1+ |y| ^k) \in S^{\delta-N} ( \mathbb{R}^d), 
$$ with semi-norms in $S^{\delta-N} $ bounded by 
$$ \| a_{N,p}\|_{\delta-N, \alpha, \beta, \gamma} \leq  C_{N, \alpha, \beta, \gamma}2^{p(k-N)}  \sum_{|\alpha'| \leq |\alpha|, |\beta'| \leq |\beta|, |\gamma'| \leq |\gamma| + N}  \| a\|_{\delta, \alpha', \beta', \gamma'}.
$$
We deduce from Corollary~\ref{coro} that $ \chi  A  (1+ |x| ^k) \phi_p$ is bounded from $\mathcal{M}_0( \mathbb{R}^d)$ to $W^{N-\delta-\eta,1} ( \mathbb{R}^d)$ by $C_{N,\eta} 2^{p(k-N)} $ and we conclude by  choosing $N> k$. 
 
\end{proof}
\section{Temperance}
To deal with pseudodifferential operators in the passage to the limit when defining tangent measures, we need, in the definition of tangent measures,  a temperance property which is actually satisfied on a set of full measure. This property is implicit in the construction by Preiss (see~\cite[Theorem 2.5]{Pr}). A slightly weaker upper-bound \eqref{dyadicbound} can be found in  \cite[Proposition 10.5]{Ribook} with $\eta=2$. As we also need the lower bound we include its short proof here.
 \begin{prop}\label{Preiss}
Let $\nu$ be a compactly supported bounded non negative Radon measure on $\mathbb{R}^d$, and let  $\{r_j\}_{j\in\mathbb N}$ a sequence  convergent to zero.  Then for  $\nu $-a.e. points $x_0$ and every $\eta >0$, there is a subsequence of $\{r_{j_k}\}_{k\in\mathbb N}$  and $C, c>0$ such that
$$ \nu_k :=\frac{(T^{x_0,r_{j_k}})_\sharp\nu}{\nu (B(x_0,r_{j_k}))} , \qquad T^{x_0,r_{j_k}}: y \mapsto x_0+ r_{j_k} x,$$ 
satisfies the uniform bounds  for $k\in\mathbb N$ and $R>1$:
\begin{equation}\label{dyadicbound}  \nu_k(B(x_0,R))=\frac{\nu (B(x_0,R r_{j_k}))}{\nu (B(x_0,r_{j_k}))}\leq C R^{d+\eta}, \end{equation}
\begin{equation}\label{dyadicboundbis}   \nu_k(B(x_0,R^{-1} ))=\frac{\nu (B(x_0, R^{-1} r_{j_k} ))}{\nu (B(x_0,r_{j_k}))}\geq c R^{-(d+\eta)}. \end{equation}
\end{prop}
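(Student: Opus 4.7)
The plan is to prove both the upper and lower bounds simultaneously via a single weak-type maximal-function estimate that is uniform in $j$, and then extract the good subsequence pointwise $\nu$-a.e. Fix $\eta>0$. For $j,n\in\mathbb{N}$ and $C>0$, introduce the ``bad set'' at dyadic scale $2^n r_j$,
$$A_{j,n}^C:=\{x\in\mathbb{R}^d:\nu(B(x,2^n r_j))>C\,2^{n(d+\eta)}\,\nu(B(x,r_j))\}.$$
The first and key step is the uniform bound
$$\nu(A_{j,n}^C)\leq K(d)\,C^{-1}\,2^{-n\eta}\,\nu(\mathbb{R}^d).$$
This will follow from Besicovitch's covering theorem applied to $\{B(x,r_j)\}_{x\in A_{j,n}^C}$, yielding $N(d)$ disjoint subfamilies $\{B(x_i^{(k)},r_j)\}_i$ with centers in $A_{j,n}^C$ whose union covers $A_{j,n}^C$. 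For each disjoint family, the centers are $2r_j$-separated, so a volume-packing argument (each $B(x_i^{(k)},r_j)\subset B(y,(2^n{+}1)r_j)$ for $y\in B(x_i^{(k)},2^nr_j)$, and these inner balls are disjoint) bounds the overlap multiplicity of the dilates $B(x_i^{(k)},2^n r_j)$ by $2^d\,2^{nd}$. Combining with the defining inequality rewritten as $\nu(B(x_i^{(k)},r_j))<C^{-1}2^{-n(d+\eta)}\nu(B(x_i^{(k)},2^n r_j))$ and integrating the overlap function against $\nu$ gives the claim.

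Next, introduce the two dyadic maximal functions
$$H_j(x):=\sup_{n\geq 0}\frac{\nu(B(x,2^n r_j))}{2^{n(d+\eta)}\nu(B(x,r_j))},\qquad \widetilde{H}_j(x):=\sup_{m\geq 0}\frac{\nu(B(x,r_j))}{2^{m(d+\eta)}\nu(B(x,2^{-m}r_j))}.$$
Summing the key estimate over $n$ (convergent since $\sum_{n\geq 0}2^{-n\eta}<\infty$) gives $\nu(\{H_j>C\})\leq K'(d,\eta)\,C^{-1}\,\nu(\mathbb{R}^d)$, uniformly in $j$. Repeating the covering argument at base scale $2^{-m}r_j$ (dilation factor $2^m$) yields the same estimate for $\widetilde H_j$, so
$$\nu(\{\max(H_j,\widetilde{H}_j)>C\})\leq 2K'(d,\eta)\,C^{-1}\,\nu(\mathbb{R}^d)\qquad \text{uniformly in }j.$$

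Finally, from the inclusion
$$\{\liminf_j \max(H_j,\widetilde{H}_j)>C\}\subseteq\bigcup_N\bigcap_{j\geq N}\{\max(H_j,\widetilde{H}_j)>C\}$$
and monotone convergence, the $\nu$-measure of the left-hand side is bounded by $2K'(d,\eta)C^{-1}\nu(\mathbb{R}^d)$; letting $C\to\infty$ shows $\liminf_j\max(H_j,\widetilde{H}_j)<\infty$ at $\nu$-a.e.\ point. Thus for $\nu$-a.e.\ $x_0$ there exist $C(x_0)>0$ and a subsequence $r_{j_k}$ along which both suprema stay bounded by $C(x_0)$, which is precisely the dyadic form of \eqref{dyadicbound} and \eqref{dyadicboundbis}. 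Passing from $R=2^n$ to general $R>1$ costs only a factor $2^{d+\eta}$ (pick $n$ with $2^n\leq R<2^{n+1}$), and handling \emph{every} $\eta>0$ on a single $\nu$-conull set is done by running $\eta$ through a sequence $\eta_\ell\downarrow 0$ and taking the union of the null exceptional sets (the bound for the smaller $\eta_\ell$ implies the one for the larger $\eta$).

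The main technical point is the Besicovitch--packing argument producing the summable decay factor $2^{-n\eta}$ in the key estimate; once that is in hand, the remainder is a rather routine subsequence extraction from a uniform-in-$j$ Chebychev-type bound.
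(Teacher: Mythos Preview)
Your proof is correct and follows the same overall strategy as the paper: bound the $\nu$-measure of the ``bad'' dyadic sets uniformly in $j$, sum over scales, and extract the subsequence via a $\liminf$ argument. The one genuine difference lies in how the key weak-type estimate $\nu(A_{j,n}^C)\le K(d)\,C^{-1}2^{-n\eta}\nu(\mathbb R^d)$ is obtained. You use Besicovitch's covering theorem together with a volume-packing bound on the overlap multiplicity of the dilated balls; the paper instead applies the Fubini-type identity
\[
\nu(A)=\frac{1}{\mathcal L^d(B(0,R))}\int \nu(A\cap B(x,R))\,dx,
\]
at scale $R=r/2$, then enlarges the intersecting ball through a point of $A$ to scale $(2^k+1)r$ and reapplies the same identity. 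Both arguments produce the same estimate (up to dimensional constants); the Fubini trick is a bit slicker and avoids invoking Besicovitch, while your covering-plus-packing argument is more standard and perhaps more transparent about where the factor $2^{nd}$ comes from. Note also that the paper parameterises the bad sets by the target measure $\delta$ rather than the threshold $C$, and sums with weights $a_k=k^{-1-\gamma}$ to obtain the slightly sharper bound $R^d\log^{1+\gamma}(1+R)$ mentioned in the remark; your direct parameterisation gives $R^{d+\eta}$ straight away, which is all that is claimed.
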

\begin{rem}
The proof gives actually a more precise bound (with a logarithmic loss)  $R^d \log ^{a} (1+R)$, $a>1$. On the other hand, for absolutely continuous measures $\nu = fd\mathcal L^d$, at all Lebesgue points of $f$, i.e. for $\mathcal L^d$-almost every $x_0$ we have
$$ \lim_{r\rightarrow 0} \frac{ 1} {C_d r^d} \int_{B(x_0,r)} f(x) dx = f(x_0). $$ 
Since $0< |f(x_0)| <+\infty$ for $fd\mathcal L^d$-almost every $x_0$ we get easily that in~\eqref{dyadicbound}~-\eqref{dyadicboundbis} we can replace $R^{\pm (d+ \eta)}$ by $R^{\pm d}$ (without the logarithmic loss). We do not know if it is the case for general~$\nu$, or even for doubling measures $\nu$.
\end{rem}

\begin{proof}
We basically follow the construction of tangent measures in ~\cite{Pr} with minor differences, but since the temperance bound~\eqref{dyadicbound} does not seem to appear anywhere explicitly in the literature, we give below the complete proof.

We can use Fubini and get for any set $A$ and radius $R$
\begin{equation}\label{fubini}
\nu(A)=\frac1{\mathcal L^d(B(0,R))}\int \nu(A\cap B(x,R))dx.
\end{equation}
We consider for $r>0, \delta>0, k\in\mathbb N$ the set,
$$E_{r,k,\delta}:=\{x\in\mathbb R^d, \nu(B(x,2^   kr))\geq \beta_{k,\delta} \, \nu (B(x,r))\},$$
where
$$\beta_{k,\delta}:=\frac{2^d(2^k+1)^d)\,\nu(K)}{\delta}, $$
$K$ is a compact set where $\nu$ is supported, and $\delta>0$ to be chosen later.

Applying~\eqref{fubini} to $A= E_{r,k,\delta}$, $R= \frac r 2$, we get 
$$\nu (E_{r,k,\delta})=\frac1{\omega_d(\frac r2)^d}\int \nu(E_{r,k,\delta}\cap B(x,\frac r2))dx,$$
where $\omega_d=\mathcal L^d(B(0,1))$.
If $E_{r,k,\delta}\cap B(x,\frac r2)\neq\varnothing$ then we can use a point $z$ in this intersection to get
\begin{multline}
\nu(E_{r,k,\delta}\cap B(x,\frac r2))\leq \nu(B(x,\frac r2))\leq \nu(B(z,r))\\
\leq \frac1{\beta_{r,k,\delta}} \nu(B(z,2^k r))\leq \frac 1{\beta_{r,k,\delta}}  \nu(B(x,(2^k+1)r)).\end{multline}
Therefore for any $r>0, \delta>0, k\in\mathbb N$, and using again \eqref{fubini} with $A=K$ we obtain
\begin{equation}\label{estsize}
\nu(E_{r,k,\delta})\leq \delta \frac 1{\omega_d((2^k+1)r)^d\nu(K)}\int \nu(B(x,(2^k+1)r))dx=\delta,
\end{equation}
and on the complementary set $^cE_{r,k,\delta}$ we have
\begin{equation}\label{estsup}
\frac{\nu(B(x,2^kr))}{ \nu(B(x,r))}\leq \beta_{k,\delta}=\frac{2^d(2^k+1)^d\,\nu(K)}{\delta},
\end{equation}
while on the complementary set $^cE_{r 2^{-k},k, \delta}$ we have similarly 
\begin{equation}\label{estinf}\frac{\nu(B(x,r))}{ \nu(B(x,2^{-k}r))}\leq \beta_{k,\delta}=\frac{2^d(2^k+1)^d\,\nu(K)}{\delta}. 
\end{equation}

Let $r_j$ be a sequence convergent to zero and $\epsilon >0$. 
Now we consider 
$$A_{\{r_j\},\epsilon}:=\cup_{i=1}^{\infty}\cap _{j=i}^\infty A_{r_j,\epsilon},$$ where $A_{r_j,\epsilon}:=\cup_{k=1}^\infty \bigl( E_{r_j,k, a_k\epsilon}\cup  E_{r_j2^{-k}, k,a_k \epsilon}\bigr)$ and $a_k= k^{-1-\gamma}, \gamma>0$, summable  so that in view of \eqref{estsize} we have $\nu(A_{r_j,\epsilon})\leq C\epsilon$. In particular $\nu(A_{\{r_j\},\epsilon})\leq C \epsilon$. For any point $x\notin A_{\{r_j\},\epsilon}$ we get for all $i\in\mathbb N$ the existence of $j\geq i$ such that $x\notin A_{r_j,\epsilon}$. This yields a subsequence $r_{j_n}$ such that $x\notin A_{r_{j_n},\epsilon}$ for all $n\in\mathbb N$, thus $x\notin E_{r_{j_n},k, a_k\epsilon}$ and $x\notin E_{r_{j_n}2^{-k},k, a_k\epsilon}$  for all $n,k\in\mathbb N$. Then, renaming this sequence $r_j$ for simplicity, from \eqref{estsup}-\eqref{estinf} we get 
$$\frac{\nu (B(x,2^k r_j))}{\nu(B(x,r_j))}\leq \frac{2^{kd}}{k^{1+\gamma}\epsilon},\quad \frac{\nu(B(x,2^{-k} r_j))}{\nu (B(x, r_j))}\geq \frac{k^{1+\gamma}\epsilon}{2^{kd}},\quad  \forall j,k\in \mathbb N.$$
As a consequence for all $x\notin A_{\{r_j\},\epsilon}$ there is a subsequence along which a non-zero tangent measure at $x$ is obtained. By defining 
$$A_{\{r_j\}}:=\cap_{n=1}^\infty A_{\{r_j\},\frac 1n},$$
we get a set of zero measure such that for all $x\notin A_{\{r_j\}}$, thus in particular $x\notin A_{\{r_j\},\frac 1{n_x}}$ for some $n_x\in\mathbb N$, there is a subsequence along which a non-zero tangent measure at $x$ is obtained and the bounds \eqref{dyadicbound}-\eqref{dyadicboundbis} are satisfied for $R= 2^k$, and hence for all $R\geq 1$.
\end{proof}

 \end{document}